\begin{document}

\renewcommand{\theequation}{\thesection.\arabic{equation}}
\newcommand{\nc}{\newcommand}

\nc{\pr}{\noindent{\em Proof. }} \nc{\g}{\mathfrak g}
\nc{\n}{\mathfrak n} \nc{\opn}{\overline{\n}}\nc{\h}{\mathfrak h}
\renewcommand{\b}{\mathfrak b}
\nc{\Ug}{U(\g)} \nc{\Uh}{U(\h)} \nc{\Un}{U(\n)}
\nc{\Uopn}{U(\opn)}\nc{\Ub}{U(\b)} \nc{\p}{\mathfrak p}
\renewcommand{\l}{\mathfrak l}
\nc{\z}{\mathfrak z} \renewcommand{\h}{\mathfrak h}
\nc{\m}{\mathfrak m}
\renewcommand{\k}{\mathfrak k}
\nc{\opk}{\overline{\k}}
\nc{\opb}{\overline{\b}}
\nc{\e}{{\epsilon}}
\nc{\ke}{{\bf k}_\e}
\nc{\Hk}{{\rm Hk}^{\gr}(A,A_0,\e )}
\nc{\gr}{\bullet}
\nc{\ra}{\rightarrow}
\nc{\Alm}{A-{\rm mod}}
\nc{\DAl}{{D}^-(A)}
\nc{\HA}{{\rm Hom}_A}

\newtheorem{theorem}{Theorem}{}
\newtheorem{lemma}[theorem]{Lemma}{}
\newtheorem{corollary}[theorem]{Corollary}{}
\newtheorem{conjecture}[theorem]{Conjecture}{}
\newtheorem{proposition}[theorem]{Proposition}{}
\newtheorem{axiom}{Axiom}{}
\newtheorem{remark}{Remark}{}
\newtheorem{example}{Example}{}
\newtheorem{exercise}{Exercise}{}
\newtheorem{definition}{Definition}{}

\renewcommand{\thetheorem}{\thesection.\arabic{theorem}}

\renewcommand{\thelemma}{\thesection.\arabic{lemma}}

\renewcommand{\theproposition}{\thesection.\arabic{proposition}}

\renewcommand{\thecorollary}{\thesection.\arabic{corollary}}

\renewcommand{\theremark}{\thesection.\arabic{remark}}

\renewcommand{\thedefinition}{\thesection.\arabic{definition}}

\title{Strictly transversal slices to conjugacy classes in algebraic groups}

\author{A. Sevostyanov}

\address{ Institute of Pure and Applied Mathematics,
University of Aberdeen \\ Aberdeen AB24 3UE, United Kingdom \\e-mail: a.sevastyanov@abdn.ac.uk}

\begin{abstract}
We show that for every conjugacy class $\mathcal{O}$ in a connected semisimple algebraic group $G$ over an algebraically closed field of characteristic good for $G$ one can find a special transversal slice $\Sigma$ to the set of conjugacy classes in $G$ such that $\mathcal{O}$ intersects $\Sigma$ and ${\rm dim}~\mathcal{O}={\rm codim}~\Sigma$.
\end{abstract}

\keywords{Algebraic group, transversal slice}

\maketitle

\hfill {\hsize = 8cm \vtop{\noindent \em To Victor Kac on the occasion of his 70th birthday.}}


\pagestyle{myheadings}
\markboth{A. SEVOSTYANOV}{STRICTLY TRANSVERSAL SLICES TO CONJUGACY CLASSES}


\setcounter{equation}{0}

\section{Introduction}

Let $G$ be a
connected finite--dimensional semisimple algebraic group over an algebraically closed field $\bf k$ of characteristic good for $G$, ${\frak g}$ its Lie algebra. It is well known that if ${\rm char}~{\bf k}=0$ or ${\rm char}~{\bf k}>4h-2$, where $h$ is the Coxeter number of $\g$, then for any nilpotent element $e\in \g$ one can construct a special transversal slice $S(e)$, called the Slodowy slice, such that $S(e)$ is transversal to the set of adjoint orbits in $\g$, the adjoint orbit $\mathcal{O}_e$ of $e$ intersects $S(e)$, and ${\rm dim}~\mathcal{O}_e={\rm codim}~S(e)$ (see \cite{SL}). In this paper transversal slices with similar properties are defined for arbitrary conjugacy classes in the algebraic group $G$.

A general scheme for constructing transversal slices to conjugacy classes in algebraic groups was suggested in \cite{S6}. The slices defined there are associated to conjugacy classes in the corresponding Weyl group $W$ (see Section \ref{trans} for details). The definition given in \cite{S6} is a deep generalization of the Steinberg cross--section to the set of regular conjugacy classes in $G$ (see \cite{St2}). In fact for each conjugacy class $\mathcal{O}_s\subset W$ of an element $s\in W$ the corresponding slice $\Sigma_s$ depends also on the ordering of the eigenvalues of $s$ which are not equal to $1$ and have non--negative imaginary parts, in the complex reflection representation (note that $s$ is of finite order, and hence all these eigenvalues are roots of unity). In this paper we show that for each element $g\in G$ one can find a conjugacy class $\mathcal{O}_s\subset W$ and an ordering of the eigenvalues of $s$ with non--negative imaginary parts such that the conjugacy class $\mathcal{O}_g$ of $g$ in $G$ intersects $\Sigma_s$ and ${\rm dim}~\mathcal{O}_g={\rm codim}~\Sigma_s$.

In order to solve this problem we use a partition of $G$ the strata of which $G_{\mathcal{O}_s}$, labeled by a special set $C(W)$ of conjugacy classes $\mathcal{O}_s$ in $W$, are unions of conjugacy classes of the same dimension,
$$
G=\bigcup_{\mathcal{O}_s\in C(W)}G_{\mathcal{O}_s}.
$$
This remarkable partition was recently introduced by Lusztig in \cite{L1}. We recall the definition of the Lusztig partition and the relevant combinatorics in Section \ref{luspart}.

It turns out that if the eigenvalues of $\epsilon_1,\ldots,\epsilon_k$ of  $s$, which are not equal to $1$ and have non--negative imaginary parts, are ordered so that
$$
0<\arg \epsilon_1\leq\arg \epsilon_2\leq\ldots\leq\arg \epsilon_{k},
$$
where $\arg$ stands for the principal argument of a complex number, then all conjugacy classes in the stratum $G_{\mathcal{O}_s}$ intersect the corresponding transversal slice $\Sigma_s$, and for each conjugacy class $\mathcal{O}\in G_{\mathcal{O}_s}$
\begin{equation}\label{sttr}
{\rm dim}~\mathcal{O}={\rm codim}~\Sigma_s.
\end{equation}
The first property is a consequence of inclusions of Bruhat cells in the Bruhat order. But condition (\ref{sttr}) is checked by straightforward calculation in case of all simple algebraic groups and in case of all strata (see Theorem \ref{mainth}). In case of root systems of exceptional types a computer program is used for that purpose (see Appendix 1).

Note that in case when $s$ is elliptic, i.e. $s$ acts without fixed points in the reflection representation of $W$, our results imply that $\Sigma_s=\dot{s}N_s$  and  ${\rm dim}~\Sigma_s=\underline{l}(s)$, where $\underline{l}(s)$ is the length of a representative $s\in \mathcal{O}_s$ of minimal possible length with respect to a system $\Gamma$ of positive simple roots, $N_s=\{ v \in N|\dot{s}v\dot{s}^{-1}\in \overline{N} \}$, $N$ is the unipotent radical of $G$ corresponding to $\Gamma$, $\overline{N}$ is the opposite unipotent radical and $\dot{s}$ is a representative of $s$ in $G$  (see the second part of the proof of Theorem \ref{mainth}). Therefore from Theorem 0.7 in \cite{L4} one immediately deduces that $\Sigma_s$ coincides with the cross--section associated to $\mathcal{O}_s$ in \cite{Lus}.

{\bf Acknowledgements}

The author is grateful to Dave Benson for his help with the MAGMA software and to Institut des Hautes \'{E}tudes Scientifiques where this work was completed.


\setcounter{equation}{0}
\setcounter{theorem}{0}

\section{Notation}\label{notation}

Fix the notation used throughout the text.
Let $G$ be a
connected finite--dimensional semisimple algebraic group over an algebraically closed field $\bf k$ of characteristic good for $G$, ${\frak g}$ the complex semisimple Lie algebra of the same type as $G$, $G_p$ a connected finite--dimensional semisimple algebraic group of the same type as $G$ over an algebraically closed field of characteristic exponent $p$. Fix a Cartan subalgebra ${\frak h}\subset {\frak
g}$ and let $\Delta$ be the set of roots of $\left( {\frak g},{\frak h}
\right)$.  Let $\alpha_i,~i=1,\ldots l,~~l={\rm rank}~{\frak g}$ be a system of
simple roots, $\Delta_+=\{ \beta_1, \ldots ,\beta_N \}$
the set of positive roots.
Let $H_1,\ldots ,H_l$ be the set of simple root generators of $\frak h$.

Let $a_{ij}$ be the corresponding Cartan matrix,
and let $d_1,\ldots , d_l$, $i=1,\ldots,l$ be coprime positive integers such that the matrix
$b_{ij}=d_ia_{ij}$ is symmetric. There exists a unique non--degenerate invariant
symmetric bilinear form $\left( ~,~\right)$ on ${\frak g}$ such that
$(H_i , H_j)=d_j^{-1}a_{ij}$. It induces an isomorphism of vector spaces
${\frak h}\simeq {\frak h}^*$ under which $\alpha_i \in {\frak h}^*$ corresponds
to $d_iH_i \in {\frak h}$. We denote by $\alpha^\vee$ the element of $\frak h$ that
corresponds to $\alpha \in {\frak h}^*$ under this isomorphism.
The induced bilinear form on ${\frak h}^*$ is given by
$(\alpha_i , \alpha_j)=b_{ij}$. We shall always identify $\h$ and $\h^*$ by means of the form $\left( ~,~\right)$.

Let $W$ be the Weyl group of the root system $\Delta$. $W$ is the subgroup of $GL({\frak h})$
generated by the fundamental reflections $s_1,\ldots ,s_l$,
$$
s_i(h)=h-\alpha_i(h)H_i,~~h\in{\frak h}.
$$
The action of $W$ preserves the bilinear form $\left( ~,~\right)$ on $\frak h$.
We denote a representative of $w\in W$ in $G$ by
$\dot{w}$. For $w\in W, g\in G$ we write $w(g)=\dot{w}g\dot{w}^{-1}$.
For any root $\alpha\in \Delta$ we also denote by $s_\alpha$ the corresponding reflection.
For every element $w\in W$ one can introduce the set $\Delta_w=\{\alpha \in \Delta_+: w(\alpha)\in -\Delta_+\}$, and the number of the elements in the set $\Delta_w$ is equal to the length $\underline{l}(w)$ of the element $w$ with respect to the system of simple roots in $\Delta_+$.

Let ${{\frak b}_+}$ be the Borel subalgebra associated to $\Delta_+$ and ${\frak b}_-$
the opposite Borel subalgebra; let ${\frak n}_+=[{{\frak b}_+},{{\frak b}_+}]$ and $%
{\frak n}_-=[{\frak b}_-,{\frak b}_-]$ be their
nilradicals. Let $B_+$ be the Borel subgroup in $G$ which corresponds to $\Delta_+$, $H$ the maximal torus in $B_+$, $B_-$ the opposite Borel subgroup, and $N_{\pm}$ the unipotent radicals of $B_\pm$. Thus $W$ is the Weyl group of the pair $(\g,\h)$ and of the pair $(G,H)$, $W=W(\g,\h)=W(G,H)$, and $\Delta$ is the root system of the pair $(G,H)$, $\Delta=\Delta(G,H)$. This notation will also be used when $G$ is reductive. $\Delta_+$ can also be defined as the system of positive roots associated to $\b_+$ or to $B_+$.

In this paper we denote by $\mathbb{N}$ the set of nonnegative integer numbers, $\mathbb{N}=\{0,1,\ldots \}$.


\section{Transversal slices to conjugacy classes in algebraic groups}\label{trans}

\setcounter{equation}{0}
\setcounter{theorem}{0}

In this section, following \cite{S6}, we recall the general definition of transversal slices to conjugacy classes in algebraic groups. The slices are introduced with the help of systems of positive roots associated to Weyl group elements.
We start with the definition of these systems of positive roots.

Let $s$ be an element of the Weyl group $W$ of the pair $(\g,\h)$ and $\h_{\mathbb{R}}$ the real form of $\h$, the real linear span of simple coroots in $\h$. The set of roots $\Delta$ is a subset of the dual space $\h_\mathbb{R}^*$. The restriction of the canonical bilinear form to $\h_{\mathbb{R}}$ induces a non--degenerate Weyl group invariant bilinear form on it. Let $\h'_{\mathbb{R}}$ be the orthogonal complement in $\h_{\mathbb{R}}$, with respect to the canonical bilinear form, to the subspace of $\h_{\mathbb{R}}$ fixed by the natural action of $s$ on $\h_{\mathbb{R}}$, $\h'\subset \h$ the complexification of $\h'_{\mathbb{R}}$. Let $\h'^*$ be the image of $\h'$ in $\h^*$ under the identification $\h^*\simeq \h$ induced by the canonical bilinear form on $\g$ and $P_{{\h'}^*}$ the orthogonal projection operator onto ${{\h'}^*}$ in $\h^*$, with respect to canonical bilinear form.
Now recall that by Theorem C in \cite{C} $s$ can be represented as a product of two involutions,
\begin{equation}\label{sinv}
s=s^1s^2,
\end{equation}
where $s^1=s_{\gamma_1}\ldots s_{\gamma_n}$, $s^2=s_{\gamma_{n+1}}\ldots s_{\gamma_{l'}}$, the roots in each of the sets $\gamma_1, \ldots \gamma_n$ and ${\gamma_{n+1}}\ldots {\gamma_{l'}}$ are positive and mutually orthogonal, and
the roots $\gamma_1, \ldots \gamma_{l'}$ form a linear basis of $\h'$.

The Weyl group element $s$ naturally acts on $\h_{\mathbb{R}}$ as an orthogonal transformation with respect to the scalar product induced by the canonical bilinear form on $\g$. Now we recall some results of \cite{Car}, Sect. 10.4 on the spectral decomposition for the action of $s$ on $\h_{\mathbb{R}}$.

Let $f_1,\ldots,f_{l'}$ be the vectors of unit length in the directions of $\gamma_1, \ldots \gamma_{l'}$, and $\widehat{f}_1,\ldots,\widehat{f}_{l'}$ the basis of $\h_{\mathbb{R}}'$ dual to $f_1,\ldots,f_{l'}$. Let $M$ be the $l'\times l'$ symmetric matrix with real entries $M_{ij}=(f_i,f_j)$. $I-M$ is also a symmetric real matrix, and hence it is diagonalizable and has real eigenvalues.

The following proposition gives a recipe for constructing a spectral decomposition for the action of the orthogonal transformation $s$ on $\h_{\mathbb{R}}$.

\begin{proposition}
Let $\lambda\neq 0,\pm 1$ be a (real) eigenvalue of the symmetric matrix $I-M$, and $u\in \mathbb{R}^{l'}$ a corresponding non--zero real eigenvector with components $u_i$, $i=1,\ldots ,l'$. Let $a_u,b_u\in \h_{\mathbb{R}}$ be defined by
\begin{equation}\label{ab}
a_u = \sum_{k=1}^n u_i\widehat{f}_i,~b_u = \sum_{k=n+1}^{l'} u_i\widehat{f}_i.
\end{equation}

Then the angle $\theta$ between $a_u$ and $b_u$ satisfies $\cos \theta=\lambda$.

The plane $\h_\lambda \subset \h_{\mathbb{R}}$ spanned by $a_u$ and $b_u$ is invariant with respect to the involutions $s^{1,2}$, $s^1$ acts on $\h_\lambda$ as the reflection in the line spanned by $b_u$, and $s^2$ acts on $\h_\lambda$ as the reflection in the line spanned by $a_u$. The orthogonal transformation $s=s^1s^2$ acts on $\h_\lambda$ as a rotation through the angle $2\theta$.

In particular, if $\lambda\neq 0,\pm 1$ is an eigenvalue of $I-M$ then $-\lambda$ is also an eigenvalue of $I-M$, and
if $\lambda\neq \mu$ are two positive eigenvalues of $I-M$, $\lambda,\mu \neq 1$ then the planes $\h_\lambda$ and $\h_\mu$ are mutually orthogonal.

Moreover, let $\lambda\neq 0,\pm 1$ be an eigenvalue of $I-M$ of multiplicity greater than $1$, and $u^k \in \mathbb{R}^{l'}$, $k=1,\ldots, ~{\rm mult}~\lambda$ a basis of the eigenspace corresponding to $\lambda$. If the basis $u^k$ is orthonormal with respect to the standard scalar product on $\mathbb{R}^{l'}$ then the corresponding planes $\h_\lambda^k$ defined with the help of $u^k$, $k=1,\ldots, {\rm mult}~\lambda$ are mutually orthogonal.

$\lambda=\pm 1$ are not eigenvalues of $I-M$ , and if $\lambda= 0$ is an eigenvalue of $I-M$, then there is a basis $u^k \in \mathbb{R}^{l'}$, $k=1,\ldots, ~{\rm mult}~0$ of the eigenspace corresponding to $0$ orthonormal with respect to the standard scalar product on $\mathbb{R}^{l'}$ and such that the corresponding non--zero elements $a_{u^k}$, $b_{u^k}$ are all mutually orthogonal. Moreover, $s^1a_{u^k}=-a_{u^k}$, $s^2a_{u^k}=a_{u^k}$, $s^1b_{u^k}=b_{u^k}$, $s^2b_{u^k}=-b_{u^k}$ for non--zero elements $a_{u^k}$, $b_{u^k}$. In particular, for  non--zero elements $a_{u^k}$, $b_{u^k}$ we have $sa_{u^k}=-a_{u^k}$, $sb_{u^k}=-b_{u^k}$, and non--zero elements $a_{u^k}$, $b_{u^k}$ is a basis of the subspace of $\h_{\mathbb{R}}$ on which $s$ acts by multiplication by $-1$.
\end{proposition}

\begin{proof}
All statements of this proposition, except for the last two parts, are proved by repeating the arguments given in the proofs of Lemma 10.4.2, Proposition 10.4.3 in \cite{Car} and using the spectral theory of orthogonal transformations.

For the last two statements one has to use some calculations from the proof of Lemma 10.4.3 in \cite{Car}.
More precisely, by definition the matrix $M$ can be written in a block form,
\begin{equation}\label{m}
M=\left(
  \begin{array}{cc}
    I_n & A \\
    A^\top & I_{l'-n} \\
  \end{array}
\right),
\end{equation}
where $A$ ia a $n\times (l'-n)$ matrix, $A^\top$ is the transpose to $A$, $I_n$ and $I_{l'-n}$ are the unit matrixes of sizes $n$ and $l'-n$. $M^{-1}$ is also symmetric and has a similar block form,
\begin{equation}\label{m-1}
M^{-1}=\left(
  \begin{array}{cc}
    B & C \\
    C^\top & D \\
  \end{array}
\right),~B=B^\top,~D=D^\top,
\end{equation}
with the entries $M^{-1}_{ij}=(\widehat{f}_i,\widehat{f}_j)$.

For any vector $u\in \mathbb{R}^{l'}$ we introduce its $\mathbb{R}^{n}$ and $\mathbb{R}^{l'-n}$ components $\widetilde{u}$ and $\widetilde{\widetilde{u}}$ in a similar way,
\begin{equation}\label{udec}
u=\left(
    \begin{array}{c}
      \widetilde{u} \\
      \widetilde{\widetilde{u}} \\
    \end{array}
  \right).
\end{equation}
We shall consider both $\widetilde{u}$ and $\widetilde{\widetilde{u}}$ as elements of $\mathbb{R}^{l'}$ using natural embeddings $\mathbb{R}^{n}, \mathbb{R}^{l'-n} \subset \mathbb{R}^{l'}$ associated to decomposition (\ref{udec}).

If $u$ is a non--zero eigenvector of $I-M$ corresponding to an eigenvalue $\lambda\neq 0,\pm 1$ then the equation $(I-M)u=\lambda u$ gives
\begin{equation}\label{ueq}
-A\widetilde{\widetilde{u}}=\lambda\widetilde{u},~-A^\top\widetilde{u}=\lambda\widetilde{\widetilde{u}}.
\end{equation}
Since $M^{-1}M=I$ one has
\begin{equation}\label{mmi}
BA+C=0,~C^\top+DA^\top=0.
\end{equation}
Multiplying the first and the second equations in (\ref{ueq}) from the left by $B$ and $D$, respectively, and using (\ref{mmi}) we obtain that
\begin{equation}\label{ident}
C\widetilde{\widetilde{u}}=\lambda B\widetilde{u},~C^\top\widetilde{u}=\lambda D\widetilde{\widetilde{u}}.
\end{equation}

Now if $u^{1,2}$ are two non--zero eigenvectors of $I-M$ corresponding to an eigenvalue $\lambda\neq 0,\pm 1$ then by (\ref{m-1}) we have
\begin{equation}\label{1}
(a_{u^1},a_{u^2})=\sum_{i,j=1}^nu_i^1u_j^2(\widehat{f}_i,\widehat{f}_j)=\sum_{i,j=1}^nu_i^1u_j^2B_{ij}=
\widetilde{u}^1\cdot B\widetilde{u}^2,
\end{equation}
where $\cdot$ stands for the standard scalar product in $\mathbb{R}^{l'}$.

From (\ref{ident}) and the last formula we also obtain that
\begin{equation}\label{2}
(a_{u^1},a_{u^2})=\widetilde{u}^1\cdot B\widetilde{u}^2=\frac{1}{\lambda}\widetilde{u}^1\cdot C\widetilde{\widetilde{u}}^2= \frac{1}{\lambda}C^\top\widetilde{u}^1\cdot \widetilde{\widetilde{u}}^2=D\widetilde{\widetilde{u}}^1\cdot \widetilde{\widetilde{u}}^2=(b_{u^1},b_{u^2}).
\end{equation}

Similarly,
\begin{equation}\label{3}
(a_{u^1},b_{u^2})=\lambda(a_{u^1},a_{u^2})=\widetilde{u}^1\cdot C\widetilde{\widetilde{u}}^2,~(b_{u^1},a_{u^2})=\lambda(a_{u^1},a_{u^2})=\widetilde{\widetilde{u}}^1\cdot C^\top{\widetilde{u}}^2
\end{equation}

Now (\ref{1}), (\ref{2}), (\ref{3}) and the identity $M^{-1}u^2=\frac{1}{1-\lambda}u^2$ yield
$$
(a_{u^1}+b_{u^1},a_{u^2}+b_{u^2})=2(a_{u^1},a_{u^2})(\lambda + 1)=u^1\cdot M^{-1}u^2=\frac{1}{1-\lambda}u^1 \cdot u^2.
$$
Thus if $u^{1,2}$ are mutually orthogonal $a_{u^1}$ and $a_{u^2}$ are also mutually orthogonal, and from (\ref{2}) and (\ref{3}) we obtain that $b_{u^1}$ and $b_{u^2}$, $a_{u^1}$ and  $b_{u^2}$, $a_{u^2}$ and $b_{u^1}$  are mutually orthogonal. Therefore the planes spanned by $a_{u^1},b_{u^1}$ and by $a_{u^2},b_{u^2}$ are mutually orthogonal.

$\lambda=1$ is not an eigenvalue of $I-M$ since the matrix $M$ is invertible. $\lambda=-1$ is not an eigenvalue of $I-M$ since otherwise the corresponding elements $a_u$, $b_u$ would span a non--trivial fixed point subspace for the action of $s$ in $\h'_{\mathbb{R}}$ which is impossible as $s$ acts on $\h'_{\mathbb{R}}$ without non--trivial fixed points.

If $\lambda= 0$ is an eigenvalue of $I-M$ then $\widetilde{u}$ and $\widetilde{\widetilde{u}}$ are the components of an eigenvector $u$ of $I-M$ with eigenvalue $0$ iff $A\widetilde{\widetilde{u}}=0$ and $A^\top\widetilde{u}=0$. Therefore using the usual orthogonalization procedure one can construct a basis $u^k \in \mathbb{R}^{l'}$, $k=1,\ldots, ~{\rm mult}~0$ of the eigenspace corresponding to $0$ orthonormal with respect to the standard scalar product on $\mathbb{R}^{l'}$ and such that the components $\widetilde{u}^k$ and $\widetilde{\widetilde{u}}^k$ $k=1,\ldots, ~{\rm mult}~0$ are all mutually orthogonal.

Now using the definition of the eigenvector we deduce that for the basis $u^k$ the following relations hold $B\widetilde{u}^k=\widetilde{u}^k$, $D\widetilde{\widetilde{u}}^k=\widetilde{\widetilde{u}}^k$, $C\widetilde{\widetilde{u}}^k=0$, $C^\top \widetilde{u}^k=0$. Recalling also formulas (\ref{1}), the first and the last identities in (\ref{2}), and (\ref{3}), and repeating the arguments given in the proofs of Lemma 10.4.2, Proposition 10.4.3 in \cite{Car} we immediately deduce the last statement of this proposition.

This completes the proof.

\end{proof}

Let $\h_{-1}$ be the subspace of $\h_{\mathbb{R}}$ on which $s$ acts by multiplication by $-1$.
According to the previous proposition one can choose one--dimensional $s^{1,2}$--invariant subspaces in $\h_{-1}$ such that $\h_{-1}$ is the orthogonal direct sum of those subspaces.


Using the previous proposition we can decompose $\h_{\mathbb{R}}$ into a direct orthogonal sum of $s$--invariant subspaces,
\begin{equation}\label{hdec}
\h_\mathbb{R}=\bigoplus_{i=0}^{K} \h_i,
\end{equation}
where $\h_0$ is the linear subspace of $\h_{\mathbb{R}}$ fixed by the action of $s$, each $\h_i$ is also invariant with respect to both involutions $s^{1,2}$ in the decomposition $s=s^1s^2$,
and each of the subspaces $\h_i\subset \h_\mathbb{R}$, $i=1,\ldots, K$, is either two--dimensional ($\h_i=\h_\lambda^k$ for an eigenvalue $0<\lambda<1$ of the matrix $I-M$, and $k=1,\ldots, {\rm mult}~\lambda$) and the Weyl group element $s$ acts on it as rotation with angle $\theta_i$, $0<\theta_i<\pi$ or $\h_i=\h_{\lambda}^k$, $\lambda=0$, $k=1,\ldots, {\rm mult}~\lambda$ has dimension $1$, and $s$ acts on it by multiplication by $-1$ .  Note that since $s$ has finite order $\theta_i=\frac{2\pi n_i}{m_i}$, $n_i,m_i\in \{1,2,\ldots \}$.

Since the number of roots in the root system $\Delta$ is finite one can always choose elements $h_i\in \h_i$, $i=0,\ldots, K$, such that $h_i(\alpha)\neq 0$ for any root $\alpha \in \Delta$ which is not orthogonal to the $s$--invariant subspace $\h_i$ with respect to the natural pairing between $\h_{\mathbb{R}}$ and $\h_{\mathbb{R}}^*$.

Now we consider certain $s$--invariant subsets of roots $\overline{\Delta}_i$, $i=0,\ldots, K$, defined as follows
\begin{equation}\label{di}
{\overline{\Delta}}_i=\{ \alpha\in \Delta: h_j(\alpha)=0, j>i,~h_i(\alpha)\neq 0 \},
\end{equation}
where we formally assume that $h_{K+1}=0$.
Note that for some indexes $i$ the subsets ${\overline{\Delta}}_i$ are empty, and that the definition of these subsets depends on the order of terms in direct sum (\ref{hdec}).

Now consider the nonempty $s$--invariant subsets of roots $\overline{\Delta}_{i_k}$, $k=0,\ldots, T$.
For convenience we assume that indexes $i_k$ are labeled in such a way that $i_j<i_k$ if and only if $j<k$.
According to this definition $\overline{\Delta}_{0}=\{\alpha \in \Delta: s\alpha=\alpha\}$ is the set of roots fixed by the action of $s$. Observe also that the root system $\Delta$ is the disjoint union of the subsets $\overline{\Delta}_{i_k}$,
$$
\Delta=\bigcup_{k=0}^{T}\overline{\Delta}_{i_k}.
$$

Now assume that
\begin{equation}\label{cond}
|h_{i_k}(\alpha)|>|\sum_{p\leq j<k}h_{i_j}(\alpha)|, ~{\rm for~any}~\alpha\in \overline{\Delta}_{i_k},~k=0,\ldots, T,~p<k.
\end{equation}
Condition (\ref{cond}) can be always fulfilled by suitable rescalings of the elements $h_{i_k}$.

Consider the element
\begin{equation}\label{hwb}
\bar{h}=\sum_{k=0}^{T}h_{i_k}\in \h_\mathbb{R}.
\end{equation}

From definition (\ref{di}) of the sets $\overline{\Delta}_i$ we obtain that for $\alpha \in \overline{\Delta}_{i_k}$
\begin{equation}\label{dech}
\bar{h}(\alpha)=\sum_{j\leq k}h_{i_j}(\alpha)=h_{i_k}(\alpha)+\sum_{j< k}h_{i_j}(\alpha)
\end{equation}
Now condition (\ref{cond}), the previous identity and the inequality $|x+y|\geq ||x|-|y||$ imply that for $\alpha \in \overline{\Delta}_{i_k}$ we have
$$
|\bar{h}(\alpha)|\geq ||h_{i_k}(\alpha)|-|\sum_{j< k}h_{i_j}(\alpha)||>0.
$$
Since $\Delta$ is the disjoint union of the subsets $\overline{\Delta}_{i_k}$, $\Delta=\bigcup_{k=0}^{M}\overline{\Delta}_{i_k}$, the last inequality ensures that  $\bar{h}$ belongs to a Weyl chamber of the root system $\Delta$, and one can define the subset of positive roots $\Delta_+$ and the set of simple positive roots $\Gamma$ with respect to that chamber. We call $\Delta_+$ introduced in this way a system of positive roots associated to (the conjugacy class of) the Weyl group element $s$. From condition (\ref{cond}) and formula (\ref{dech}) we also obtain that a root $\alpha \in \overline{\Delta}_{i_k}$ is positive if and only if
\begin{equation}\label{wc}
h_{i_k}(\alpha)>0.
\end{equation}
We denote by $(\overline{\Delta}_{i_k})_+$ the set of positive roots contained in $\overline{\Delta}_{i_k}$, $(\overline{\Delta}_{i_k})_+=\Delta_+\bigcap \overline{\Delta}_{i_k}$.

We shall also need the parabolic subgroup $P$ of $G$ associated to the subset $\Gamma_0=\Gamma\bigcap\overline{\Delta}_{0}\subset \Gamma$ of simple roots.
Let $N$ and $L$ be the unipotent radical and the Levi factor of $P$, respectively. Note that we have natural inclusions $P\supset B_+\supset N$, where $B_+$ is the Borel subgroup of $G$ corresponding to the system $\Gamma$ of simple roots, and $\overline{\Delta}_{0}$ is the root system of the reductive group $L$. We also denote by $\overline{N}$ the unipotent radical opposite to $N$.

Now we can define certain transversal slices to conjugacy classes in algebraic groups.
Let $Z$ be the subgroup of $G$  generated by the semisimple part $M$ of the Levi factor $L$ and by the centralizer of $\dot{s}$ in $H$.

\begin{proposition}\label{crosssect}
{\bf (\cite{S6}, Propositions 2.1 and 2.2)}
Let $N_s=\{ v \in N|\dot{s}v\dot{s}^{-1}\in \overline{N} \}$.
Then the conjugation map
\begin{equation}\label{cross}
N\times \dot{s}ZN_s\rightarrow N\dot{s}ZN
\end{equation}
is an isomorphism of varieties. Moreover, the variety $\Sigma_s=\dot{s}ZN_s$ is a transversal slice to the set of conjugacy classes in $G$.
\end{proposition}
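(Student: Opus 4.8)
The proof naturally splits into the isomorphism statement, which is the main point, and the transversality statement, which will then follow from it.

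The plan is first to record three structural facts about $N$ relative to $\dot{s}$. Since $\overline{\Delta}_0=\{\alpha\in\Delta:s\alpha=\alpha\}$ is the root system of the Levi factor $L$ and $s$ fixes it pointwise, $s$ permutes the set $\Psi$ of roots of $N$ and sends it into $\Delta_+\cup(-\Delta_+)$ without ever meeting $\overline{\Delta}_0$; hence $\Psi$ decomposes as a disjoint union $\Psi=\Psi_s\sqcup\Psi_s'$, where $\Psi_s=\{\alpha\in\Psi:s\alpha\in-\Delta_+\}$ and $\Psi_s'=\{\alpha\in\Psi:s\alpha\in\Delta_+\}$. Both subsets are closed under addition of roots, since if $s\alpha,s\beta$ have the same sign and $\alpha+\beta$ is a root then so does $s(\alpha+\beta)$, while $\alpha+\beta\in\Psi$ because $\Psi$ is stable under adding positive roots. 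Consequently $N_s$ and $N_s':=\{v\in N:\dot{s}v\dot{s}^{-1}\in N\}$ are closed subgroups directly spanned by the corresponding root subgroups, and multiplication is an isomorphism of varieties $N_s\times N_s'\xrightarrow{\sim}N$ in either order. The second fact is that $Z$ normalizes both $N_s$ and $N_s'$: the torus part $Z_H(\dot{s})\subseteq H$ normalizes every root subgroup, while conjugation by $M$ adds to a root $\alpha$ only multiples of roots $\beta\in\overline{\Delta}_0$, and because $s\beta=\beta$ the $s$-image $s(\alpha+k\beta)=s\alpha+k\beta$ remains a root of $\overline{N}$, resp. of $N$. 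The third fact is the movability across $\dot{s}$: by definition $\dot{s}N_s'\dot{s}^{-1}\subseteq N$, so an element of $N_s'$ placed immediately to the right of $\dot{s}$ can be pushed past it into $N$.

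With these facts in hand I would prove that the conjugation map $\Theta\colon(n,\dot{s}zv)\mapsto n\dot{s}zvn^{-1}$ is a bijection onto $N\dot{s}ZN$. That the image lies in $N\dot{s}ZN$ is immediate from $Z$ normalizing $N$. For surjectivity, start from an arbitrary $g=n_1\dot{s}zn_2$, decompose $n_2=vd$ with $v\in N_s$ and $d\in N_s'$, and conjugate by $d$: this cancels the right-hand $N_s'$-factor and replaces $g$ by the $N$-conjugate $n'\dot{s}zv$. It then remains to remove the left factor $n'\in N$; writing $n'=v_1d_1$ and using the movability of $N_s'$-parts across $\dot{s}$ together with the $Z$-normalization, this reduces to solving a system of equations in the root coordinates of $N$ which, ordered by root height compatibly with $\Psi=\Psi_s\sqcup\Psi_s'$, is triangular with unipotent leading terms and hence uniquely solvable. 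The same triangular system read backwards yields injectivity. I expect this simultaneous elimination of the left $N$-factor and the right $N_s'$-factor to be the main obstacle, since the two reductions interfere and must be organised into a single triangular recursion rather than carried out independently. Finally, because the solution is produced by a unipotent triangular system its entries are polynomial in the coordinates of $g$, so the set-theoretic inverse of $\Theta$ is a morphism and $\Theta$ is an isomorphism of varieties.

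For the second assertion I would deduce transversality of $\Sigma_s$ from the isomorphism just established. By $G$-equivariance of conjugation it suffices to check, at each point $x=\dot{s}zv\in\Sigma_s$, that $\operatorname{Lie}(G)\cdot x+T_x\Sigma_s=T_xG$, where $\operatorname{Lie}(G)\cdot x$ is the tangent space to the conjugacy class of $x$. Differentiating $\Theta$ at $(e,x)$ and using that it is an isomorphism onto $N\dot{s}ZN$ shows that $\operatorname{Lie}(N)\cdot x$ is a complement to $T_x\Sigma_s$ inside $T_x(N\dot{s}ZN)$, so these directions are already independent of the slice. The remaining directions are supplied by the opposite unipotent radical $\overline{N}$ and by the part of $H$ not contained in $Z$: a symmetric computation produces $\dim\overline{N}=\dim N$ further independent directions in $\operatorname{Lie}(G)\cdot x$, and a dimension count based on $\dim\Sigma_s=\dim Z+\dim N_s$ shows that these together with $T_x\Sigma_s$ already exhaust $T_xG$. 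Hence the conjugation action map is submersive along $\Sigma_s$, which is precisely the assertion that $\Sigma_s=\dot{s}ZN_s$ is a transversal slice to the set of conjugacy classes in $G$; this last step is routine once the isomorphism of the first part is available.
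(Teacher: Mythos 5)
First, a point of comparison: the paper does not prove this proposition at all --- it is imported verbatim from \cite{S6}, Propositions 2.1 and 2.2, with only the remark that the proof there (given over $\mathbb{C}$) extends to any algebraically closed field. So your attempt must stand on its own, and while your structural preliminaries are correct and are the right starting point (the partition $\Psi=\Psi_s\sqcup\Psi_s'$ into closed subsets, the unique factorizations $N=N_sN_s'=N_s'N_s$, the fact that $Z$ normalizes both factors, and the first reduction conjugating away the $N_s'$-component of $n_2$), the core of the first assertion is asserted rather than proved. You claim the elimination of the remaining left factor reduces to a system that is ``triangular with unipotent leading terms'' when ``ordered by root height'', and you yourself flag this as the main obstacle. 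Two things are missing. (i) Root height is not the right filtration: conjugation by $z\in Z$ involves the semisimple part $M$ of the Levi, whose adjoint action adds multiples of roots of $\overline{\Delta}_0$, including negative ones, and therefore does not respect the height filtration of $\Delta_+$; one needs a filtration stable under all of $P$ (e.g.\ the descending central series of $N$, equivalently the grading by coefficients on the simple roots outside $\Gamma_0$). (ii) Even with the right filtration, ``unipotent leading terms'' is exactly the nontrivial content: at each graded level one must verify that a linear map built from $1-\mathrm{Ad}(\dot{s}z)$, restricted to the relevant complementary pieces, is invertible, and this is precisely where the defining property $\dot{s}N_s\dot{s}^{-1}\subset\overline{N}$ has to enter. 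Without that lemma both surjectivity and injectivity have a hole at their center.

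The transversality argument is not merely incomplete but fails as stated. You propose that $\mathrm{Lie}(\overline{N})\cdot x$ supplies $\dim\overline{N}$ directions independent of $\mathrm{Lie}(N)\cdot x+T_x\Sigma_s$ and that a dimension count then closes the argument. But $\dim N+\dim\overline{N}+\dim\Sigma_s=\dim G+\bigl(\underline{l}(s)-(l-\dim\mathfrak{h}_0)\bigr)$, and $\underline{l}(s)$ strictly exceeds the codimension $l-\dim\mathfrak{h}_0$ of the fixed-point space of $s$ in $\mathfrak{h}$ in general: already for $G=SL_3$ and $s=s_{\alpha_1+\alpha_2}=w_0$ one gets $3+3+4=10>8=\dim G$. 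Hence these subspaces cannot be independent, and no dimension count can show that their sum is all of $T_xG$; transversality here is a genuine spanning statement (the tangent space to the conjugacy class and the slice are expected to overlap, since the slices of Proposition \ref{crosssect} are transversal but not in general \emph{strictly} transversal --- making the overlap zero-dimensional is the whole point of Theorem \ref{mainth}). What is actually required is to show that the centralizer of $x=\dot{s}zv$ in $\mathfrak{g}$ meets the orthogonal complement, with respect to the invariant form, of the slice directions trivially, or equivalently to exhibit the missing tangent directions explicitly; this does not follow formally from the isomorphism of the first part and is the separate content of Proposition 2.2 of \cite{S6}.
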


Propositions 2.1 and 2.2 in \cite{S6} were proved when $\bf k$ is the field of complex numbers. But the proof can be straightforwardly generalized to the case of an arbitrary algebraically closed field $\bf k$.


\setcounter{equation}{0}
\setcounter{theorem}{0}

\section{The Lusztig partition}\label{luspart}

Recall that the purpose in this paper is show that for every conjugacy class $\mathcal{O}$ in $G$ one can find a transversal slice $\Sigma_s$ such that $\mathcal{O}$ intersects $\Sigma_s$ and ${\rm dim}~\mathcal{O}={\rm codim}~\Sigma_s$. It turns out that there is a remarkable partition of the group $G$ the strata of which are unions of conjugacy classes of the same dimension (see \cite{L1}). For each stratum of this partition there is a Weyl group element $s$ such that all conjugacy classes $\mathcal{O}$ from that stratum intersect a transversal slice $\Sigma_s$, and ${\rm dim}~\mathcal{O}={\rm codim}~\Sigma_s$. In this section following \cite{L1} we describe this partition which is called the Lusztig partition.

For any Weyl group $W$ let $\widehat{W}$ be the set of isomorphism classes of irreducible representations of $W$ over $\mathbb{Q}$. For any $E\in \widehat{W}$ let $b_E$ be the smallest nonnegative integer such that $E$ appears with non--zero multiplicity in the $b_E$-th symmetric power of the reflection representation of $W$. If this multiplicity is equal to $1$ then one says that $E$ is good. If $W'\subset W$ are two Weyl groups, and $E\in \widehat{W}'$ is good then there is a unique $\widetilde{E}\in \widehat{W}$ such that $\widetilde{E}$ appears in the decomposition of the induced representation ${\rm Ind}_{W'}^WE$, $b_{\widetilde{E}}=b_E$, and $\widetilde{E}$ is good. The representation $\widetilde{E}$ is called j-induced from $E$, $\widetilde{E}=j_{W'}^WE$.

Let $g\in G_p$, and $g=g_sg_u$ its decomposition as a product of the semisimple part $g_s$ and the unipotent part $g_u$. Let $C=Z_{G_p}(g_s)^0$ be the identity component of the centralizer of $g_s$ in $G_p$. $C$ is a reductive subgroup of $G_p$ of the same rank as $G_p$. Let $H_p$ be a maximal torus of $C$. $H_p$ is also a maximal torus in $G_p$, and hence one has a natural imbedding
$$
W'=N_C(H_p)/H_p \rightarrow N_{G_p}(H_p)/H_p=W,
$$
where $N_C(H_p), N_{G_p}(H_p)$ stand for the normalizers of $H_p$ in $C$ and in $G_p$, respectively, $W'$ is the Weyl group of $C$ and $W$ is the Weyl group of $G_p$.

Let $E$ be the irreducible representation of $W'$ associated with the help of the Springer correspondence to the conjugacy class of $g_u$ and the trivial local system on it. Then $E$ is good, and let $\widetilde{E}$ be the j-induced representation of $W$. This gives a well defined map $\phi_{G_p}:G_p\rightarrow \widehat{W}$. The fibers of this map are called the strata of $G_p$. By definition the map $\phi_{G_p}$ is constant on each conjugacy class in $G_p$. Therefore the strata are unions of conjugacy classes.

Moreover, by 1.3 in \cite{L1} we have for any $g\in G_p$
\begin{equation}\label{dimcon}
{\rm dim}~Z_{G_p}(g)={\rm rank}~G_p+2b_{\phi_{G_p}(g)},
\end{equation}
where ${\rm rank}~G_p$ is the rank of $G_p$.

It turns out that the image $\mathcal{R}(W)$ of $\phi_{G_p}$ only depends on $W$. It can be described as follows. Let $\mathcal{N}(G_p)$ be the unipotent variety of $G_p$ and $\underline{\mathcal{N}}(G_p)$ the set of unipotent classes in $G_p$. Let $\mathcal{X}^p(W)$ be the set of irreducible representations of $W$ associated by the Springer correspondence to unipotent classes in $\underline{\mathcal{N}}(G_p)$ and the trivial local systems on them. We shall identify $\mathcal{X}^p(W)$ and $\underline{\mathcal{N}}(G_p)$. Let $f_p:\underline{\mathcal{N}}(G_p) \rightarrow \mathcal{X}^p(W)$ be the corresponding bijective map.

\begin{proposition}{\bf (\cite{L1}, Proposition 1.4)}
We have
$$
\mathcal{R}(W)=\mathcal{X}^1(W)\bigcup_{r~{\rm prime}}\mathcal{X}^r(W).
$$
\end{proposition}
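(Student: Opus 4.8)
The plan is to prove the asserted equality by establishing both inclusions for a \emph{fixed} characteristic exponent $p$, so that the independence of ${\rm im}~\phi_{G_p}$ from $p$ emerges as a by-product. First I would rewrite the image intrinsically: by construction every element of $\mathcal{R}(W)$ has the form $j_{W'}^W E$, where $W'$ is the Weyl group of a connected centralizer $C=Z_{G_p}(g_s)^0$ of a semisimple element and $E$ is the Springer representation attached to a unipotent class of $C$ and the trivial local system. Thus the statement splits into: (I) every such $j$-induced representation $j_{W'}^W E$ is itself the Springer representation of some unipotent class in $G_{p'}$ for an appropriate characteristic exponent $p'$, which gives $\mathcal{R}(W)\subseteq\bigcup_{p'}\mathcal{X}^{p'}(W)$; and (II) conversely every Springer representation in each $\mathcal{X}^{p'}(W)$ is realized as $\phi_{G_p}(g)$ for a suitable $g\in G_p$ with our fixed $p$, which gives the reverse inclusion.

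For (II) the case $p'=p$ is immediate: taking $g=g_u$ unipotent, i.e. $g_s=1$, gives $C=G_p$, $W'=W$ and $\phi_{G_p}(g)=j_W^W E=E$, so $\mathcal{X}^p(W)\subseteq\mathcal{R}(W)$ with no effort. The content of (II) is therefore to produce, for a bad prime $r\neq p$, a semisimple $g_s\in G_p$ whose centralizer $C$ carries a unipotent class whose Springer representation $j$-induces to the given class of $\mathcal{X}^r(W)$. Here I would use the Borel--de Siebenthal description of the connected centralizers of semisimple elements: their root systems are exactly those obtained by iteratively deleting nodes from the extended Dynkin diagram, and the orders of the corresponding semisimple elements are governed by the marks of the highest root, hence by the torsion primes of $G$. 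The invariants $b_{\widetilde{E}}=b_E$ together with the dimension formula (\ref{dimcon}) serve throughout as a bookkeeping device, since they force ${\rm dim}~Z(u)$ to agree on the two sides and so pin down which unipotent class, and which characteristic, a given $j$-induction can come from.

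The heart of both (I) and (II) is a compatibility statement: the $j$-induction, from a pseudo-Levi subgroup $C$, of a Springer representation attached to the trivial local system is again such a Springer representation, but for a group whose characteristic may differ from $p$. I would prove this by combining the behaviour of the Springer correspondence under Lusztig--Spaltenstein induction with the classification of unipotent classes in bad characteristic, the point being that the \emph{extra} unipotent classes appearing in characteristic $r$ (those with no counterpart in characteristic $0$) are matched precisely with the \emph{extra} pseudo-Levi subgroups occurring as centralizers of semisimple elements of order divisible by $r$. Since a semisimple element of $G_p$ has order prime to $p$, the family of available centralizers changes with $p$ exactly so as to compensate for the change in the family of unipotent classes, which is what makes $\mathcal{R}(W)$ independent of $p$.

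The main obstacle I anticipate is making this last matching precise and exhaustive: one must show that the correspondence between bad-characteristic unipotent classes and centralizers of torsion elements is a bijection compatible with both the Springer correspondence and $j$-induction, with no spurious or missing representations on either side. I expect that a fully uniform argument would require the generalized Springer correspondence and the description of reductive centralizers together with their cuspidal data, in the spirit of Sommers and McNinch--Sommers, while a more pedestrian route would verify the equality type by type, reading off the Springer tables and the lists of unipotent classes in good and bad characteristic. For the exceptional types this pedestrian route reduces to a finite, if substantial, check analogous to the computer verification used elsewhere in this paper.
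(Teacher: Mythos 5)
First, a point of reference: the paper does not prove this statement at all --- it is quoted verbatim from \cite{L1}, Proposition 1.4, and used as a black box. So there is no in-paper argument to compare yours against; what can be assessed is whether your outline would actually constitute a proof.

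Your high-level strategy is the right one, and it is essentially the one that underlies the known proofs: the trivial inclusion $\mathcal{X}^p(W)\subseteq \mathcal{R}(W)$ via $g_s=1$, the reduction of everything else to pseudo-Levi subgroups obtained from the extended Dynkin diagram (Borel--de Siebenthal), and the matching of the ``extra'' unipotent classes in bad characteristic $r$ with the ``extra'' centralizers of semisimple elements whose order is divisible by the relevant torsion prime, in the spirit of Sommers and McNinch--Sommers. The logical frame (prove both inclusions for one fixed $p$, deduce independence of $p$ because the right-hand side is independent of $p$) is also sound. However, as written this is a plan rather than a proof: the entire content of the proposition sits in the ``compatibility statement'' you isolate --- that $j$-induction from a pseudo-Levi of a Springer representation with trivial local system is again such a Springer representation for some characteristic, and that this matching is a bijection exhausting $\mathcal{X}^r(W)\setminus\mathcal{X}^1(W)$ for each bad prime $r$ with nothing spurious on either side. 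You explicitly defer this step (``I would prove this by combining\ldots'', ``the main obstacle I anticipate\ldots''), and neither the uniform route through the generalized Springer correspondence nor the type-by-type verification against the tables is actually carried out. A minor additional imprecision: the orders of semisimple elements with a prescribed pseudo-Levi centralizer are controlled by the coefficients of the highest root, and the relevant primes are the bad/torsion primes --- these are related but not identical notions, and a complete argument would have to keep them straight. Until the matching is made precise and exhaustive, the inclusion $\mathcal{R}(W)\supseteq\bigcup_r\mathcal{X}^r(W)$ for $r\neq p$, and the reverse containment $\mathcal{R}(W)\subseteq\mathcal{X}^1(W)\cup\bigcup_r\mathcal{X}^r(W)$, remain unproved.
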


If $G_p$ is of type $A_n$, $(n\geq 1)$ or $E_6$ then $\mathcal{R}(W)=\mathcal{X}^1(W)$.

If $G_p$ is of type $B_n$ $(n\geq 2)$, $C_n$ $(n\geq 3)$, $D_n$ $(n\geq 4)$, $F_4$ or $E_7$ then $\mathcal{R}(W)=\mathcal{X}^2(W)$.

If $G_p$ is of type $G_2$ then $\mathcal{R}(W)=\mathcal{X}^3(W)$.

If $G_p$ is of type $E_8$ then $\mathcal{R}(W)=\mathcal{X}^2(W)\bigcup\mathcal{X}^3(W)$, and $\mathcal{X}^2(W)\bigcap\mathcal{X}^3(W)=\mathcal{X}^1(W)$.

The above description of the set $\mathcal{R}(W)$ and the bijections $\underline{\mathcal{N}}(G_p) \rightarrow \mathcal{X}^p(W)$ yield certain maps between sets $\underline{\mathcal{N}}(G_p)$ which preserve dimensions of conjugacy classes by (\ref{dimcon}). For instance, one always has an inclusion $\mathcal{X}^1(W) \subset \mathcal{X}^r(W)$ for any $r\geq 2$. The corresponding inclusion $\underline{\mathcal{N}}(G_1)\subset \underline{\mathcal{N}}(G_p)$ coincides with the Spaltenstein map $\pi^G_p: \underline{\mathcal{N}}(G_1)\rightarrow \underline{\mathcal{N}}(G_p)$ (see \cite{Spal}, Th\'{e}or\`{e}me III.5.2).

Fix a system of positive roots in $\Delta=\Delta(G_p,H_p)$, and let $B_p$ be the corresponding Borel subgroup in $G_p$, $H_p\subset B_p$ the maximal torus, and $\underline{l}$ the corresponding length function on $W$. Denote by $\underline{W}$ the set of conjugacy classes in $W$. For each $w\in W=N_{G_p}(H_p)/H_p$ one can pick up a representative $\dot{w}\in G_p$.
If $p$ is good for $G_p$, we simply write $G_p=G$, $B_p=B$, $\underline{\mathcal{N}}(G_p)=\underline{\mathcal{N}}(G)$.

Let $\mathcal{C}$ be a conjugacy class in $W$. Pick up a representative $w\in \mathcal{C}$ of minimal possible length with respect to $\underline{l}$. By Theorem 0.4 in \cite{L4} there is a unique conjugacy class $\mathcal{O}\in \underline{\mathcal{N}}(G)$ of minimal possible dimension which intersects the Bruhat cell $B\dot{w}B$ and does not depend on the choice of the minimal possible length representative $w$ in $\mathcal{C}$. We denote this class by $\Phi_1^G(\mathcal{C})$.

The representative $w$ is elliptic in a parabolic Weyl subgroup $W'\subset W$, i.e. $w$ acts without fixpoints in the reflection representation of $W'$. Let $P'\subset G$ be the parabolic subgroup which corresponds to $W'$, and $M'$ the semi-simple part of the Levi factor of $P'$, so that $W'$ is the Weyl group of $M'$. Let $\Phi_p^G(\mathcal{C})$ be the unipotent class in $G_p$ containing the class $\pi_p^{M'}\Phi_1^{M'}(\mathcal{C})$. This class only depends on the conjugacy class $\mathcal{C}$, and hence one has a map $\Phi_p^G: \underline{W}\rightarrow \underline{\mathcal{N}}(G_p)$ which is in fact surjective by 4.5(a) in \cite{L4}.

Let $\mathcal{C}\in \underline{W}$, and $m_\mathcal{C}$ the dimension of the fixpoint space for the action of any $w\in \mathcal{C}$ in the reflection representation.
Then by Theorem 0.2 in \cite{L3} for any $\gamma \in \underline{\mathcal{N}}(G_p)$ there is a unique $\mathcal{C}_0\in (\Phi_p^G)^{-1}(\gamma)$ such that the function $m_\mathcal{C}: (\Phi_p^G)^{-1}(\gamma) \rightarrow \mathbb{N}$ reaches its minimum at $\mathcal{C}_0$. We denote $\mathcal{C}_0$ by $\Psi_p^G(\gamma)$. Thus one obtains an injective map $\Psi_p^G: \underline{\mathcal{N}}(G_p)\rightarrow \underline{W}$.

Now recall that using identifications $f_p:\underline{\mathcal{N}}(G_p) \rightarrow \mathcal{X}^p(W)$ one can define a bijection
$$
F:\widehat{\underline{\mathcal{N}}}(G)=\underline{\mathcal{N}}(G_1)\bigcup_{r~{\rm prime}}\underline{\mathcal{N}}(G_r)\rightarrow \mathcal{X}^1(W)\bigcup_{r~{\rm prime}}\mathcal{X}^r(W)=\mathcal{R}(W).
$$

Using maps $\Phi_p^G$ one can also define a surjective map $\Phi^W: \underline{W}\rightarrow \widehat{\underline{\mathcal{N}}}(G)$ as follows. If $\Phi_r^G(\mathcal{C})\in \underline{\mathcal{N}}(G_1)$ for all $r>1$ then $\Phi_r^G(\mathcal{C})$ is independent of $r$, and one puts $\Phi^W(\mathcal{C})=\Phi_r^G(\mathcal{C})$ for any $r>1$. If $\Phi_r^G(\mathcal{C}) \not\in \underline{\mathcal{N}}(G_1)$ for some $r>1$ then $r$ is unique, one defines $\Phi^W(\mathcal{C})=\Phi_r^G(\mathcal{C})$.

By definition there is a right sided injective inverse $\Psi^W$ to $\Phi^W$ such that if $\gamma \in \underline{\mathcal{N}}(G_1)$ then $\Psi^W(\gamma)=\Psi_1^G(\gamma)$, and if $\gamma \not \in \underline{\mathcal{N}}(G_1)$, and $\gamma \in \underline{\mathcal{N}}(G_r)$ then $\Psi^W(\gamma)=\Psi_r^G(\gamma)$.

Denote by $C(W)$ the image of $\widehat{\underline{\mathcal{N}}}(G)$ in $\underline{W}$ under the map $\Psi^W$, $C(W)=\Psi^W(\widehat{\underline{\mathcal{N}}}(G))$. We shall identify $C(W)$, $\widehat{\underline{\mathcal{N}}}(G)$ and  $\mathcal{R}(W)$.

Now assume that $p$ is not a bad prime for $G_p$. In this case the strata of the Lusztig partition can be described geometrically as follows. Let $\mathcal{C} \in C(W)$. Pick up a representative $w\in \mathcal{C}$ of minimal possible length with respect to $\underline{l}$. Denote by $\underline{G}_p$ the set of conjugacy classes in $G_p$, and by $G_\mathcal{C}'$ the set of all conjugacy classes in $G_p$ which intersect the Bruhat cell $B_p\dot{w}B_p$. This definition does not depend on the choice of the the minimal possible length representative $w$. Let
$$
d_\mathcal{C}={\hbox{\raise-1.5mm\hbox{${\textstyle \rm min}\atop {\scriptstyle \gamma \in G_\mathcal{C}'}$}}}~{\rm dim}~\gamma.
$$
Then the stratum $G_\mathcal{C}=\phi_{G_p}^{-1}(F(\Phi^W(\mathcal{C})))$ can be described as follows (see Theorem 2.2, \cite{L1}),
$$
G_\mathcal{C}=\bigcup_{\gamma \in G_\mathcal{C}', ~{\rm dim}\gamma=d_\mathcal{C}}\gamma.
$$

Thus we have a disjoint union
$$
G_p=\bigcup_{\mathcal{C}\in C(W)}G_\mathcal{C}.
$$
Note that by definition a stratum $G_\mathcal{C}$ contains a unique unipotent class if and only if $\mathcal{C}\in {\rm Im}(\Psi_1^G)$.

For good $p$ the maps introduced above are summarized in the following diagram
\begin{equation}\label{diag}
\begin{array}{ccccccc}
   &  & \mathcal{X}^1(W) & \stackrel{f_1}{\longleftarrow} & \underline{\mathcal{N}}(G) &  & \\
   &  & \downarrow ~{\rm in} &  & \downarrow ~\pi^G &  & \\
  G & \stackrel{\phi_G}{\longrightarrow} & \mathcal{R}(W) & \stackrel{F}{\longleftarrow} & \widehat{\underline{\mathcal{N}}}(G) & {\hbox{\raise-1mm\hbox{${\textstyle \stackrel{\Phi^W}{\longleftarrow}}\atop {\hbox{${\textstyle \longrightarrow}\atop {\scriptstyle \Psi^W}$}}$}}} & \underline{W},
\end{array}
\end{equation}
where ``$\rm in$'' is an inclusion, bijections $f_1,F$ are induced by the Springer correspondence with the trivial local data, and the inclusion $\pi^G$ is induced by the Spaltenstein map.

For exceptional groups the maps $f_1,F$ can be described explicitly using tables in \cite{Spal1}, $\Phi^W,\Psi^W$ can be described using the tables in Section 2 in \cite{L3}, and ``${\rm in}$'', $\pi^G$ can be described explicitly using the tables of unipotent classes in \cite{Li} or \cite{Spal1} (note that the labelling for unipotent classes in bad characteristics in \cite{Li} differs from that in \cite{Spal1}). The dimensions of the conjugacy classes in the strata in $G$ can be obtained using dimension tables of centralizers of unipotent elements in case when a stratum contains a unipotent class (see \cite{Car1,Li}), the tables for dimensions of the centralizers of unipotent elements in bad characteristic when a stratum does not contain a unipotent class (see \cite{Li}) or formula (\ref{dimcon}) and the tables of the values of the $b$--invariant $b_E$ for representations of Weyl groups (see \cite{Car1,Gk}). Note that formula (\ref{dimcon}) implies that
if $\mathcal{O}$ is any conjugacy class in $G_\mathcal{C}$, $\mathcal{O}\in G_\mathcal{C}$ then
\begin{equation}\label{od}
{\rm dim}~\mathcal{O}={\rm dim}~\Phi^W(\mathcal{C}).
\end{equation}

In case of classical groups all those maps and dimensions are described in terms of partitions (see \cite{Car1,Gk1,Li,L2,L3,L4,Spal}). In case of classical matrix groups the strata can also be described explicitly (see \cite{L1}).
We recall this description below. By (\ref{od}) the dimensions of the conjugacy classes in every stratum of $G$ are equal to the dimension of the corresponding conjugacy class in $\widehat{\underline{\mathcal{N}}}(G)$. The dimensions of centralizers of unipotent elements in arbitrary characteristic can be found in \cite{Hess,Li}.

If $\lambda =(\lambda_1\geq \lambda_2\geq \ldots\geq \lambda_m)$ is a partition we denote by $\lambda^* =(\lambda_1^*\geq \lambda_2^*\geq \ldots\geq \lambda_m^*)$ the corresponding dual partition. It is defined by the property that $\lambda_1^*=m$ and $\lambda_i^*-\lambda_{i+1}^*=l_i(\lambda)$, where $l_i(\lambda)$ is the number of times $i$ appears in the partition $\lambda$. We also denote by $\tau(\lambda)$ the length of $\lambda$, $\tau(\lambda)=m$. If a partition $\mu$ is obtained from $\lambda$ by adding a number of zeroes, we shall identify $\lambda$ and $\mu$.

\subsection*{$\bf A_n$}

$G$ is of type ${\rm SL}(V)$ where $V$ is a vector space of dimension $n+1 \geq 1$ over an algebraically closed field ${\bf k}$ of characteristic exponent $p\geq 1$. $W$ is the group of permutations of $n+1$ elements.
All sets in (\ref{diag}), except for $G$, are isomorphic to the set of partitions of $n+1$, and all maps, except for $\phi_G$, are the identity maps.

To describe $\phi_G$ for $G={\rm SL}(V)$ we choose a sufficiently large $m \in \mathbb{N}$. Let $g \in G$. For any $x \in {\bf k}^*$ let $V_x$ be the
generalized $x$--eigenspace of $g:V \rightarrow V$ and let $\lambda^x_1
\geq \lambda^x_2
\geq \ldots \geq \lambda^x_m$ be the sequence
in $\mathbb{N}$ whose terms are the sizes of the Jordan blocks of $x^{-1}g: V_x \rightarrow V_x$. Then $\phi_G(g)$ is
the partition $\lambda(g)_1 \geq \lambda(g)_2 \geq \ldots \geq \lambda(g)_m$ given by $\lambda(g)_j = \sum_{x\in {\bf k}^*} \lambda^x_j$.

If $g$ is any element in the stratum $G_{\lambda}$ corresponding to a partition $\lambda=(\lambda_1
\geq \lambda_2\geq \ldots \geq \lambda_m)$, $\lambda_m\geq 1$, then
\begin{equation}\label{dimsl}
{\rm dim}~Z_G(g)=n+2\sum_{i=1}^m(i-1)\lambda_i.
\end{equation}

The element of $\underline{W}$ which corresponds to $\lambda$ is the Coxeter class in the Weyl subgroup of the type
\begin{equation}\label{san}
A_{\lambda_1-1}+A_{\lambda_2-1}+\ldots +A_{\lambda_m-1}.
\end{equation}
The summands in the diagram above are called blocks. Blocks of type $A_0$ are called trivial.


\subsection*{$\bf C_n$}

$G$ is of type ${\rm Sp}(V)$ where $V$ is a symplectic space of dimension $2n$, $n \geq 2$ over an algebraically closed field ${\bf k}$ of characteristic exponent $p\neq 2$.
$W$ is the group of permutations of the set $E=\{\varepsilon_1,\ldots, \varepsilon_n,-\varepsilon_1,\ldots,-\varepsilon_n\}$ which also commute with the involution  $\varepsilon_i\mapsto -\varepsilon_i$.

Elements of $\underline{W}$ are parameterized by pairs of partitions $(\lambda,\mu)$, where the parts of $\lambda$ are even (for any $w\in \mathcal{C}\in \underline{W}$ they are the numbers of elements in the negative orbits $X$, $X=-X$,  in $E$ for the action of the group $<w>$ generated by $w$), $\mu$ consists of pairs of equal parts (they are the numbers of elements in the positive $<w>$--orbits $X$ in $E$; these orbits appear in pairs $X,-X$, $X\neq -X$), and $\sum \lambda_i +\sum \mu_j=2n$. We denote this set of pairs of partitions by $\mathcal{A}^1_{2n}$.
An element of $\underline{W}$ which corresponds to a pair $(\lambda,\mu)$, $\lambda=(\lambda_1
\leq \lambda_2\leq \ldots \leq \lambda_m)$ and $\mu=(\mu_1=\mu_2\leq \ldots \leq \mu_{2k-1}=\mu_{2k})$ is the Coxeter class in the Weyl subgroup of the type
\begin{equation}\label{scn}
C_{\frac{\lambda_1}{2}}+C_{\frac{\lambda_2}{2}}+\ldots +C_{\frac{\lambda_m}{2}}+A_{\mu_1-1}+A_{\mu_3-1}+\ldots +A_{\mu_{2k-1}-1}.
\end{equation}

Elements of $\underline{\mathcal{N}}(G)$ are parameterized by partitions $\lambda$ of $2n$ for which $l_j(\lambda)$ is even for odd $j$. We denote this set of partitions by $\mathcal{T}_{2n}$. In case of $G={\rm Sp}(V)$ the parts of $\lambda$ are just the sizes of the Jordan blocks in $V$ of the unipotent elements from the conjugacy class corresponding to $\lambda$.

In this case $\widehat{\underline{\mathcal{N}}}(G)=\underline{\mathcal{N}}(G_2)$, and $G_2$ is of type ${\rm Sp}(V_2)$ where $V_2$ is a symplectic space of dimension $2n$ over an algebraically closed field of characteristic $2$. Elements of $\underline{\mathcal{N}}(G_2)$ are parameterized by pairs $(\lambda,\varepsilon)$, where $\lambda = (\lambda_1
\leq \lambda_2\leq \ldots \leq \lambda_m)\in \mathcal{T}_{2n}$, and $\varepsilon:\{\lambda_1, \lambda_2, \ldots , \lambda_m\}\rightarrow \{0,1,\omega\}$ is a function such that
\begin{equation}\label{eps}
\varepsilon(k)=\left\{
                 \begin{array}{ll}
                 \omega  & \hbox{if $k$ is odd;} \\
                   1  & \hbox{if $k=0$;} \\
                   1  & \hbox{if $k>0$ is even, $l_k(\lambda)$ is odd;} \\
                   0~{\rm or}~1  & \hbox{if $k>0$ is even, $l_k(\lambda)$ is even.}
                 \end{array}
               \right.
\end{equation}
We denote the set of such pairs $(\lambda,\varepsilon)$ by $\mathcal{T}_{2n}^2$.

Elements of $\widehat{W}$ are parameterized by pairs of partitions $(\alpha,\beta)$ written in non--decreasing order, $\alpha_1\leq \alpha_2\leq \ldots \leq \alpha_{\tau(\alpha)}$, $\beta_1\leq \beta_2 \leq \ldots \leq \beta_{\tau(\beta)}$, and such that $\sum \alpha_i+\sum \beta_i=n$. By adding zeroes we can assume that the length $\tau(\alpha)$ of $\alpha$ is related to the length of $\beta$ by $\tau(\alpha)=\tau(\beta)+1$. The set of such pairs is denoted by $X_{n,1}$.

The maps $f_1,F$ can be described as follows. Let $\lambda = (\lambda_1
\leq \lambda_2\leq \ldots \leq \lambda_{2m+1})\in \mathcal{T}_{2n}$, and assume that $\lambda_1=0$. If $f_1(\lambda)=((c'_1,c'_3,\ldots,c'_{2m+1}),(c'_2,c'_4,\ldots,c'_{2m}))$ then the parts $c'_i$ are defined by induction starting from $c'_1=0$,
$$
  \begin{array}{ll}
    c'_i=\frac{\lambda_i}{2} & \hbox{if $\lambda_i$ is even and $c'_{i-1}$ is already defined;} \\
    c'_i=\frac{\lambda_i+1}{2} & \hbox{if $\lambda_i=\lambda_{i+1}$ is odd and $c'_{i-1}$ is already defined;} \\
    c'_{i+1}=\frac{\lambda_i-1}{2} & \hbox{if $\lambda_i=\lambda_{i+1}$ is odd and $c'_{i}$ is already defined.}
  \end{array}
$$

The image of $f_1$ consists of all pairs $((c'_1,c'_3,\ldots,c'_{2m+1}),(c'_2,c'_4,\ldots,c'_{2m}))\in X_{n,1}$ such that $c'_i\leq c'_{i+1}+1$ for all $i$.

If $F(\lambda,\varepsilon)=((c_1,c_3,\ldots,c_{2m+1}),(c_2,c_4,\ldots,c_{2m}))$ then the parts $c_i$ are defined by induction starting from $c_1=0$,
$$
  \begin{array}{ll}
    c_i=\frac{\lambda_i}{2} & \hbox{if $\lambda_i$ is even, $\varepsilon(\lambda_i)=1$ and $c_{i-1}$ is already defined;} \\
    c_i=\frac{\lambda_i+1}{2} & \hbox{if $\lambda_i=\lambda_{i+1}$ is odd and $c_{i-1}$ is already defined;} \\
    c_{i+1}=\frac{\lambda_i-1}{2} & \hbox{if $\lambda_i=\lambda_{i+1}$ is odd and $c_{i}$ is already defined;} \\
    c_i=\frac{\lambda_i+2}{2} & \hbox{if $\lambda_i=\lambda_{i+1}$ is even, $\varepsilon(\lambda_i)=\varepsilon(\lambda_{i+1})=0$ and $c_{i-1}$ is already defined;} \\
    c_{i+1}=\frac{\lambda_i-2}{2} & \hbox{if $\lambda_i=\lambda_{i+1}$ is even, $\varepsilon(\lambda_i)=\varepsilon(\lambda_{i+1})=0$ and $c_{i}$ is already defined.}
  \end{array}
$$

The image $\mathcal{R}(W)$ of $F$ consists of all pairs $((c_1,c_3,\ldots,c_{2m+1}),(c_2,c_4,\ldots,c_{2m}))\in X_{n,1}$ such that $c_i\leq c_{i+1}+2$ for all $i$.

The map $\Phi^W$ is defined by $\Phi^W(\lambda,\mu)=(\nu,\varepsilon)$, where the set of parts of $\nu$ is just the union of the sets of parts of $\lambda$ and $\mu$, and
$$
\varepsilon(k)=\left\{
                 \begin{array}{ll}
                   1 & \hbox{if $k\in 2\mathbb{N}$ is a part of $\lambda$;} \\
                   0 & \hbox{if $k\in 2\mathbb{N}$ is not a part of $\lambda$;} \\
                   \omega & \hbox{if $k$ is odd.}
                 \end{array}
               \right.
$$

The map $\Psi^W$ associates to each pair $(\nu, \varepsilon)$ a unique point $(\lambda,\mu)$ in the preimage $(\Phi^W)^{-1}(\nu, \varepsilon)$ such that the number of parts of $\mu$ is minimal possible. This point is defined by the conditions
$$
l_k(\lambda)=\left\{
               \begin{array}{ll}
                 0 & \hbox{if $k$ is odd or $k$ is even, $l_k(\nu)\geq 2$ is even and $\varepsilon(k)=0$;} \\
                 l_k(\nu) & \hbox{otherwise,}
               \end{array}
             \right.
$$
$$
l_k(\mu)=\left\{
               \begin{array}{ll}
                 l_k(\nu) & \hbox{if $k$ is odd or $k$ is even, $l_k(\nu)\geq 2$ is even and $\varepsilon(k)=0$;} \\
                 0 & \hbox{otherwise.}
               \end{array}
             \right.
$$

The map $\pi^G$ is given by $\pi^G(\lambda)=(\lambda, \varepsilon')$, where
$$
\varepsilon'(k)=\left\{
                 \begin{array}{ll}
                 \omega  & \hbox{if $k$ is odd;} \\
                   1  & \hbox{if $k$ is even.}
                 \end{array}
               \right.
$$
The map ${{\pi}}^G$ is injective and its image consists of pairs $(\lambda, \varepsilon)\in {\mathcal{T}}_{2n}^2$, where $\varepsilon$ satisfies the conditions above.

To describe $\phi_G$ for $G={\rm Sp}(V)$ we choose a sufficiently large $m \in \mathbb{N}$. Let $g \in G$. For any $x \in {\bf k}^*$ let $V_x$ be the
generalized $x$--eigenspace of $g:V \rightarrow V$. For any $x \in {\bf k}^*$ such that $x^2\neq 1$ let $\lambda^x_1
\geq \lambda^x_2
\geq \ldots \geq \lambda^x_{2m+1}$ be the sequence
in $\mathbb{N}$ whose terms are the sizes of the Jordan blocks of $x^{-1}g: V_x \rightarrow V_x$.

For any $x \in {\bf k}^*$ with $x^2= 1$ let $\lambda^x_1
\geq \lambda^x_2\geq \ldots \geq \lambda^x_{2m+1}$ be the sequence in $\mathbb{N}$,
where $((\lambda^x_1
\geq \lambda^x_3\geq \ldots \geq \lambda^x_{2m+1}),(\lambda^x_2
\geq \lambda^x_4\geq \ldots \geq \lambda^x_{2m}))$ is the pair of partitions
such that the corresponding irreducible representation of the Weyl
group of type $B_{{\rm dim}~V_x/2}$ is the Springer representation attached to the unipotent
element $x^{-1}g \in {\rm Sp}(V_x)$ and to the trivial local data.

Let $\lambda(g)$ be
the partition $\lambda(g)_1 \geq \lambda(g)_2 \geq \ldots \geq \lambda(g)_{2m+1}$ given by $\lambda(g)_j = \sum_{x} \lambda^x_j$, where $x$ runs over a set of representatives for the orbits of the
involution $a\mapsto a^{-1}$ of ${\bf k}^*$.
Now $\phi_G(g)$ is the pair of partitions $((\lambda(g)_1
\geq \lambda(g)_3\geq \ldots \geq \lambda(g)_{2m+1}),(\lambda(g)_2
\geq \lambda(g)_4\geq \ldots \geq \lambda(g)_{2m}))$.

If $g$ is any element in the stratum $G_{(\lambda,\varepsilon)}$ corresponding to a pair $(\lambda,\varepsilon)\in \mathcal{T}_{2n}^2$, $\lambda=(\lambda_1
\geq \lambda_2\geq \ldots \geq \lambda_m)$ then
\begin{equation}\label{dimsp}
{\rm dim}~Z_G(g)=n+\sum_{i=1}^m(i-1)\lambda_i+\frac{1}{2}|\{i:\lambda_i ~\hbox{is odd}\}|+|\{i:\lambda_i ~\hbox{is even and}~\varepsilon(\lambda_i)=0\}|.
\end{equation}


\subsection*{$\bf B_n$}

$G$ is of type ${\rm SO}(V)$ where $V$ is a vector space of dimension $2n+1$, $n \geq 2$ over an algebraically closed field ${\bf k}$ of characteristic exponent $p\neq 2$ equipped with a non--degenerate symmetric bilinear form.
$W$ is the same as in case of $C_n$.

An element of $\underline{W}$ which corresponds to a pair $(\lambda,\mu)$, $\lambda=(\lambda_1
\geq \lambda_2\geq \ldots \geq \lambda_m)$ and $\mu=(\mu_1=\mu_2\geq \ldots \geq \mu_{2k-1}=\mu_{2k})$ is the class represented by the sum of the blocks in the following diagram (we use the notation of \cite{C}, Section 7)
\begin{eqnarray}
A_{\mu_1-1}+A_{\mu_3-1}+\ldots +A_{\mu_{2k-1}-1}+ \qquad \qquad \qquad \qquad \qquad \qquad \qquad \qquad \qquad \qquad \qquad \qquad \nonumber \\
+D_{\frac{\lambda_1+\lambda_2}{2}}(a_{\frac{\lambda_2}{2}-1})+
D_{\frac{\lambda_3+\lambda_4}{2}}(a_{\frac{\lambda_4}{2}-1})+\ldots +D_{\frac{\lambda_{m-2}+\lambda_{m-1}}{2}}(a_{\frac{\lambda_{m-1}}{2}-1})+B_{\frac{\lambda_m}{2}} ~(\mbox{$m$ is odd}),\label{sbn} \\
A_{\mu_1-1}+A_{\mu_3-1}+\ldots +A_{\mu_{2k-1}-1}+ \qquad \qquad \qquad \qquad \qquad \qquad \qquad \qquad \qquad \qquad \qquad \qquad \nonumber \\
+D_{\frac{\lambda_1+\lambda_2}{2}}(a_{\frac{\lambda_2}{2}-1})+D_{\frac{\lambda_3+\lambda_4}{2}}(a_{\frac{\lambda_4}{2}-1})+\ldots +D_{\frac{\lambda_{m-1}+\lambda_{m}}{2}}(a_{\frac{\lambda_{m}}{2}-1})~(\mbox{$m$ is even}),\nonumber
\end{eqnarray}
where it is assumed that $D_k(a_0)=D_k$.

The elements of $\underline{\mathcal{N}}(G)$ are parameterized by partitions $\lambda$ of $2n+1$ for which $l_j(\lambda)$ is even for even $j$. We denote this set of partitions by $\mathcal{Q}_{2n+1}$. In case of $G={\rm SO}(V)$ the parts of $\lambda$ are just the sizes of the Jordan blocks in $V$ of the unipotent elements from the conjugacy class corresponding to $\lambda$.

In this case $\widehat{\underline{\mathcal{N}}}(G)=\underline{\mathcal{N}}(G_2)$, and $G_2$ is of type ${\rm SO}(V_2)$ where $V_2$ is a vector space of dimension $2n+1$ over an algebraically closed field of characteristic $2$ equipped with a bilinear form $(\cdot,\cdot)$ and a non--zero quadratic form $Q$ such that
$$
(x,y)=Q(x+y)-Q(x)-Q(y),~x,y\in V_2,
$$
and the restriction of $Q$ to the null space $V_2^\perp=\{x\in V_2:(x,y)=0~\forall~y\in V_2\}$ of $(\cdot,\cdot)$ has zero kernel. In fact $G_2$ is isomorphic to a group of type ${\rm Sp}(V_2)$, ${\rm dim}~V_2=2n$, and hence
$\underline{\mathcal{N}}(G_2) \simeq \mathcal{T}_{2n}^2$.

We also have $\widehat{W}\simeq X_{n,1}$, and the map $F$ is the same as in case of $C_n$.

The map $f_1$ can be described as follows. Let $\lambda = (\lambda_1
\leq \lambda_2\leq \ldots \leq \lambda_{2m+1})\in \mathcal{Q}_{2n+1}$. If $$f_1(\lambda)=((c'_1,c'_3,\ldots,c'_{2m+1}),(c'_2,c'_4,\ldots,c'_{2m}))$$ then the parts $c'_i$ are defined by induction starting from $c'_1$,
$$
  \begin{array}{ll}
    c'_i=\frac{\lambda_i-1}{2}+i-1-2\left[\frac{i-1}{2}\right] & \hbox{if $\lambda_i$ is odd and $c'_{i-1}$ is already defined;} \\
    c'_i=\frac{\lambda_i}{2} & \hbox{if $\lambda_i=\lambda_{i+1}$ is even and $c'_{i-1}$ is already defined;} \\
    c'_{i+1}=\frac{\lambda_i}{2} & \hbox{if $\lambda_i=\lambda_{i+1}$ is even and $c'_{i}$ is already defined.}
  \end{array}
$$

The image of $f_1$ consists of all pairs $((c'_1,c'_3,\ldots,c'_{2m+1}),(c'_2,c'_4,\ldots,c'_{2m}))\in X_{n,1}$ such that $c'_i\leq c'_{i+1}$ for all odd $i$ and $c'_i\leq c'_{i+1}+2$ for all even $i$.

The image $\mathcal{R}(W)$ of $F$ consists of all pairs $((c_1,c_3,\ldots,c_{2m+1}),(c_2,c_4,\ldots,c_{2m}))\in X_{n,1}$ such that $c_i\leq c_{i+1}+2$ for all $i$.

The maps $\Phi^W$ and $\Psi^W$ are the same as in case of $C_n$.

The map $\pi^G$ is given by $\pi^G(\lambda)=(\nu, \varepsilon')$, $\lambda = (\lambda_1
\leq \lambda_2\leq \ldots \leq \lambda_{2m+1})\in \mathcal{Q}_{2n+1}$, where
$$
\nu_i=\left\{
        \begin{array}{ll}
          \lambda_i-1 & \hbox{if $\lambda_i$ and $i$ are odd and $\lambda_{i-1}<\lambda_i$;} \\
          \lambda_i+1 & \hbox{if $\lambda_i$ is odd, $i$ is even and $\lambda_{i}<\lambda_{i+1}$;}  \\
          \lambda_i & \hbox{otherwise},
        \end{array}
      \right.
$$
and
$$
\varepsilon'(k)=\left\{
                 \begin{array}{ll}
                 \omega  & \hbox{if $k$ is odd;} \\
                   0  & \hbox{if $k$ is even, there exists even $\lambda_i=k$ with even $i$ such that $\lambda_{i-1}<\lambda_i$;} \\
1 & \hbox{otherwise.}
                 \end{array}
               \right.
$$

The map ${\pi}^G$ is injective and its image consists of pairs $(\nu, \varepsilon)\in {\mathcal{T}}_{2n}^2$ such that $\varepsilon(k)\neq 0$ if $\nu_k^*$ is odd and for each even $i$ such that $\nu_i^*$ is even we have $\nu_{i-1}^*=\nu_i^*$, i.e. $i-1$ does not appear in the partition $\nu$. Here $\nu_1^*\geq\nu_2^*\geq \ldots \geq \nu_{m}^*$ is the partition dual to $\nu$.

To describe $\phi_G$ for $G={\rm SO}(V)$ we choose a sufficiently large $m \in \mathbb{N}$. Let $g \in G$. For any $x \in {\bf k}^*$ let $V_x$ be the
generalized $x$--eigenspace of $g:V \rightarrow V$. For any $x \in {\bf k}^*$ such that $x^2\neq 1$ let $\lambda^x_1
\geq \lambda^x_2
\geq \ldots \geq \lambda^x_{2m+1}$ be the sequence
in $\mathbb{N}$ whose terms are the sizes of the Jordan blocks of $x^{-1}g: V_x \rightarrow V_x$.

For any $x \in {\bf k}^*$ with $x^2= 1$ let $\lambda^x_1
\geq \lambda^x_2\geq \ldots \geq \lambda^x_{2m+1}$ be the sequence in $\mathbb{N}$, where
$((\lambda^x_1
\geq \lambda^x_3\geq \ldots \geq \lambda^x_{2m+1}),(\lambda^x_2
\geq \lambda^x_4\geq \ldots \geq \lambda^x_{2m}))$ is the pair of partitions
such that the corresponding irreducible representation of the Weyl
group of type $B_{({\rm dim}~V_x-1)/2}$ (if $x\neq -1$) or $D_{{\rm dim}~V_x/2}$ (if $x=-1$) is the Springer representation attached to the unipotent
element $x^{-1}g \in {\rm SO}(V_x)$ and to the trivial local data.

Let $\lambda(g)$ be
the partition $\lambda(g)_1 \geq \lambda(g)_2 \geq \ldots \geq \lambda(g)_{2m+1}$ given by $\lambda(g)_j = \sum_{x} \lambda^x_j$, where $x$ runs over a set of representatives for the orbits of the
involution $a\mapsto a^{-1}$ of ${\bf k}^*$.
Now $\phi_G(g)$ is the pair of partitions $((\lambda(g)_1
\geq \lambda(g)_3\geq \ldots \geq \lambda(g)_{2m+1}),(\lambda(g)_2
\geq \lambda(g)_4\geq \ldots \geq \lambda(g)_{2m}))$.

If $g$ is any element in the stratum $G_{(\lambda,\varepsilon)}$ corresponding to a pair $(\lambda,\varepsilon)\in \mathcal{T}_{2n}^2$, $\lambda=(\lambda_1
\geq \lambda_2\geq \ldots \geq \lambda_m)$ then the dimension of the centralizer of $g$ in $G$ is given by formula (\ref{dimsp}),
\begin{equation}\label{dimsoodd}
{\rm dim}~Z_G(g)=n+\sum_{i=1}^m(i-1)\lambda_i+\frac{1}{2}|\{i:\lambda_i ~\hbox{is odd}\}|+|\{i:\lambda_i ~\hbox{is even and}~\varepsilon(\lambda_i)=0\}|.
\end{equation}


\subsection*{$\bf D_n$}

$G$ is of type ${\rm SO}(V)$ where $V$ is a vector space of dimension $2n$, $n \geq 3$ over an algebraically closed field ${\bf k}$ of characteristic exponent $p\neq 2$ equipped with a non--degenerate symmetric bilinear form.
$W$ is the group of even permutations of the set $E=\{\varepsilon_1,\ldots, \varepsilon_n,-\varepsilon_1,\ldots,-\varepsilon_n\}$ which also commute with the involution  $\varepsilon_i\mapsto -\varepsilon_i$. $W$ can be regarded as a subgroup in the Weyl group $W'$ of type $C_n$.

Let $\underline{\widetilde{W}}$ be the set of $W'$--conjugacy classes in $W$.
Elements of $\underline{\widetilde{W}}$ are parameterized by pairs of partitions $(\lambda,\mu)$, where the parts of $\lambda$ are even (for any $w\in \mathcal{C}\in \underline{\widetilde{W}}$ they are the numbers of elements in the negative orbits $X$, $X=-X$,  in $E$ for the action of the group $<w>$ generated by $w$), the number of parts of $\lambda$ is even, $\mu$ consists of pairs of equal parts (they are the numbers of elements in the positive $<w>$--orbits $X$ in $E$; these orbits appear in pairs $X,-X$, $X\neq -X$), and $\sum \lambda_i +\sum \mu_j=2n$.
We denote this set of pairs of partitions by $\mathcal{A}^0_{2n}$. To each pair $(-,\mu)$, where all parts of $\mu$ are even, there correspond two conjugacy classes in $W$. To all other elements of $\mathcal{A}^0_{2n}$ there corresponds a unique conjugacy class in $W$.

An element of $\underline{\widetilde{W}}$ which corresponds to a pair $(\lambda,\mu)$, $\lambda=(\lambda_1
\geq \lambda_2\geq \ldots \geq \lambda_m)$ and $\mu=(\mu_1=\mu_2\geq \ldots \geq \mu_{2k-1}=\mu_{2k})$ is the class represented by the sum of the blocks in the following diagram (we use the notation of \cite{C}, Section 7)
\begin{equation}\label{sdn}
A_{\mu_1-1}+A_{\mu_3-1}+\ldots +A_{\mu_{2k-1}-1}+D_{\frac{\lambda_1+\lambda_2}{2}}(a_{\frac{\lambda_2}{2}-1})+
D_{\frac{\lambda_3+\lambda_4}{2}}(a_{\frac{\lambda_4}{2}-1})+\ldots +D_{\frac{\lambda_{m-1}+\lambda_{m}}{2}}(a_{\frac{\lambda_{m}}{2}-1}).
\end{equation}

Let $G'$ be the extension of $G$ by the Dynkin graph automorphism of order $2$. Then $G'$ is of type ${\rm O}(V)$. Denote by $\underline{\widetilde{\mathcal{N}}}(G)$ the set of unipotent classes of $G'$ contained in $G$.
The elements of $\underline{\widetilde{\mathcal{N}}}(G)$ are parameterized by partitions $\lambda$ of $2n$ for which $l_j(\lambda)$ is even for even $j$. Note that the number of parts of such partitions is even. We denote this set of partitions by $\mathcal{Q}_{2n}$. In case when $G={\rm SO}(V)$ the parts of $\lambda$ are just the sizes of the Jordan blocks in $V$ of the unipotent elements from the conjugacy class corresponding to $\lambda$. If $\lambda$ has only even parts then $\lambda$ corresponds to two unipotent classes in $G$ of the same dimension. In all other cases there is a unique unipotent class in $G$ which corresponds to $\lambda$.

One also has $\widehat{\underline{\mathcal{N}}}(G)=\underline{\mathcal{N}}(G_2)$, and $G_2$ is of type ${\rm SO}(V_2)$ where $V_2$ is a vector space of dimension $2n$ over an algebraically closed field of characteristic $2$ equipped with a non--degenerate bilinear form $(\cdot,\cdot)$ and a non--zero quadratic form $Q$ such that
$$
(x,y)=Q(x+y)-Q(x)-Q(y),~x,y\in V_2.
$$

Let $G'_2$ be the extension of $G_2$ by the Dynkin graph automorphism of order $2$. Then $G'_2$ is of type ${\rm O}(V_2)$. Denote by $\underline{\widetilde{\mathcal{N}}}(G_2)$ the set of unipotent classes of $G'_2$ contained in $G_2$. There is a natural injective homomorphism from ${\rm O}(V_2)$ to ${\rm Sp}(V_2)$, ${\rm dim}~V_2=2n$, and hence $\underline{\widetilde{\mathcal{N}}}(G_2) \simeq \widetilde{\mathcal{T}}_{2n}^2$, where $\widetilde{\mathcal{T}}_{2n}^2$ is the set of elements $(\lambda,\varepsilon)\in \mathcal{T}_{2n}^2$ such that $\lambda$ has an even number of parts.

Let $\widehat{\widetilde{W}}$ be the set of orbits of irreducible characters of $W$ under the action of $W'$.
Elements of $\widehat{\widetilde{W}}$ are parameterized by unordered pairs of partitions $(\alpha,\beta)$ written in non--decreasing order, $\alpha_1\leq \alpha_2\leq \ldots \leq \alpha_{\tau(\alpha)}$, $\beta_1\leq \beta_2 \leq \ldots \leq \beta_{\tau(\beta)}$, and such that $\sum \alpha_i+\sum \beta_i=n$. By adding zeroes we can assume that the length of $\alpha$ is equal to the length of $\beta$. The set of such pairs is denoted by $Y_{n,0}$.

Instead of the maps in (\ref{diag}) we shall describe the following maps
\begin{equation}\label{diagso2n}
\begin{array}{ccccccc}
   &  & \widetilde{\mathcal{X}}^1(W) & \stackrel{\widetilde{f}_1}{\longleftarrow} & \underline{\widetilde{\mathcal{N}}}(G) &  & \\
   &  & \downarrow ~{\rm in} &  & \downarrow ~\widetilde{\pi}^G &  & \\
  G & \stackrel{{\widetilde{\phi}_G}}{\longrightarrow} & {\widetilde{\mathcal{R}}(W)} & \stackrel{\widetilde{F}}{\longleftarrow} & {\underline{\widetilde{\mathcal{N}}}}(G_2) & {\hbox{\raise-1mm\hbox{${\textstyle \stackrel{\widetilde{\Phi}^W}{\longleftarrow}}\atop {\hbox{${\textstyle \longrightarrow}\atop {\scriptstyle \widetilde{\Psi}^W}$}}$}}} & \underline{\widetilde{W}},
\end{array}
\end{equation}
where $\widetilde{f}_1,\widetilde{F}$ are induced by the restrictions of the maps $f_1,F$ for $G'$, $G'_2$ to $\underline{\widetilde{\mathcal{N}}}(G)$, ${\underline{\widetilde{\mathcal{N}}}}(G_2)$, respectively, $\widetilde{\mathcal{X}}^1(W)$ and ${\widetilde{\mathcal{R}}(W)}$ are their images, ${\widetilde{\phi}_G},\widetilde{\Psi}^W,\widetilde{\Phi}^W, \widetilde{\pi}^G$ are also induced by the corresponding maps for $G'$, $G'_2$ and $W'$.

The map $\widetilde{f}_1$ is defined as in case of $B_n$. The image of $\widetilde{f}_1$ consists of all pairs $((c'_1,c'_3,\ldots,c'_{2m+1}),(c'_2,c'_4,\ldots,c'_{2m}))\in Y_{n,0}$ such that $c'_i\leq c'_{i+1}$ for all odd $i$ and $c'_i\leq c'_{i+1}+2$ for all even $i$.

If $(\lambda,\varepsilon)\in \widetilde{\mathcal{T}}_{2n}^2$, $\lambda = (\lambda_1
\leq \lambda_2\leq \ldots \leq \lambda_{2m})$ and $\widetilde{F}(\lambda,\varepsilon)=((c_1,c_3,\ldots,c_{2m-1}),(c_2,c_4,\ldots,c_{2m}))$ then the parts $c_i$ are defined by induction starting from $c_1$,
$$
  \begin{array}{ll}
    c_i=\frac{\lambda_i-2}{2}+2(i-1)-4\left[\frac{i-1}{2}\right] & \hbox{if $\lambda_i$ is even, $\varepsilon(\lambda_i)=1$  and $c_{i-1}$ is already defined;} \\
    c_i=\frac{\lambda_i-1}{2} +2(i-1)-4\left[\frac{i-1}{2}\right] & \hbox{if $\lambda_i=\lambda_{i+1}$ is odd and $c_{i-1}$ is already defined;} \\
    c_{i+1}=\frac{\lambda_i-3}{2}+2i-4\left[\frac{i}{2}\right]  & \hbox{if $\lambda_i=\lambda_{i+1}$ is odd and $c_{i}$ is already defined;}
\\
    c_i=\frac{\lambda_i}{2} +2(i-1)-4\left[\frac{i-1}{2}\right] & \hbox{if $\lambda_i=\lambda_{i+1}$ is even, $\varepsilon(\lambda_i)=0$ and $c_{i-1}$ is already defined;} \\
    c_{i+1}=\frac{\lambda_i}{2} +2(i-1)-4\left[\frac{i-1}{2}\right] & \hbox{if $\lambda_i=\lambda_{i+1}$ is even, $\varepsilon(\lambda_i)=0$ and $c_{i}$ is already defined.}
  \end{array}
$$

The image ${\widetilde{\mathcal{R}}(W)}$ of $\widetilde{F}$ consists of all pairs $((c_1,c_3,\ldots,c_{2m+1}),(c_2,c_4,\ldots,c_{2m}))\in Y_{n,0}$ such that $c_i\leq c_{i+1}$ for all odd $i$ and $c_i\leq c_{i+1}+4$ for all even $i$.

The maps $\widetilde{\Phi}^W$ and $\widetilde{\Psi}^W$ are defined by the same formulas as in case of $C_n$.

The map ${\widetilde{\pi}}^G$ is given by $\widetilde{\pi}^G(\lambda)=(\nu, \varepsilon')$, $\lambda = (\lambda_1
\leq \lambda_2\leq \ldots \leq \lambda_{2m})\in \mathcal{Q}_{2n}$, where
$$
\nu_i=\left\{
        \begin{array}{ll}
          \lambda_i-1 & \hbox{if $\lambda_i$ is odd, $i$ is even and $\lambda_{i-1}<\lambda_i$;} \\
          \lambda_i+1 & \hbox{if $\lambda_i$ and $i$ are odd, and $\lambda_{i}<\lambda_{i+1}$;}  \\
          \lambda_i & \hbox{otherwise},
        \end{array}
      \right.
$$
and
$$
\varepsilon'(k)=\left\{
                 \begin{array}{ll}
                 \omega  & \hbox{if $k$ is odd;} \\
                   0  & \hbox{if $k$ is even, there exists even $\lambda_i=k$ with odd $i$ such that $\lambda_{i-1}<\lambda_i$;} \\
1 & \hbox{otherwise.}
                 \end{array}
               \right.
$$

The map ${\widetilde{\pi}}^G$ is injective and its image consists of pairs $(\nu, \varepsilon)\in \widetilde{\mathcal{T}}_{2n}^2$ such that $\varepsilon(k)\neq 0$ if $\nu_k^*$ is odd and for each even $i$ such that $\nu_i^*$ is even we have $\nu_{i-1}^*=\nu_i^*$, i.e. $i-1$ does not appear in the partition $\nu$. Here $\nu_1^*\geq\nu_2^*\geq \ldots \geq \nu_{m}^*$ is the partition dual to $\nu$.

To describe ${\widetilde{\phi}_G}$ for $G={\rm SO}(V)$ we choose a sufficiently large $m \in \mathbb{N}$. Let $g \in G$. For any $x \in {\bf k}^*$ let $V_x$ be the
generalized $x$--eigenspace of $g:V \rightarrow V$. For any $x \in {\bf k}^*$ such that $x^2\neq 1$ let $\lambda^x_1
\geq \lambda^x_2
\geq \ldots \geq \lambda^x_{2m}$ be the sequence
in $\mathbb{N}$ whose terms are the sizes of the Jordan blocks of $x^{-1}g: V_x \rightarrow V_x$.

For any $x \in {\bf k}^*$ with $x^2= 1$ let $\lambda^x_1
\geq \lambda^x_2\geq \ldots \geq \lambda^x_{2m}$ be the sequence in $\mathbb{N}$, where $((\lambda^x_1
\geq \lambda^x_3\geq \ldots \geq \lambda^x_{2m-1}),(\lambda^x_2
\geq \lambda^x_4\geq \ldots \geq \lambda^x_{2m}))$ is the pair of partitions
such that the corresponding irreducible representation of the Weyl
group of type $D_{{\rm dim}~V_x/2}$ is the Springer representation attached to the unipotent
element $x^{-1}g \in {\rm SO}(V_x)$ and to the trivial local data.

Let $\lambda(g)$ be
the partition $\lambda(g)_1 \geq \lambda(g)_2 \geq \ldots \geq \lambda(g)_{2m+1}$ given by $\lambda(g)_j = \sum_{x} \lambda^x_j$, where $x$ runs over a set of representatives for the orbits of the
involution $a\mapsto a^{-1}$ of ${\bf k}^*$.
Now $\widetilde{\phi}_G(g)$ is the pair of partitions $((\lambda(g)_1
\geq \lambda(g)_3\geq \ldots \geq \lambda(g)_{2m-1}),(\lambda(g)_2
\geq \lambda(g)_4\geq \ldots \geq \lambda(g)_{2m}))$.

The preimage $\widetilde{\phi}_G^{-1}(\lambda,\mu)$ is a stratum in $G$ in all cases except for the one when the pair $(\lambda,\mu)$ is of the form $((\lambda_1
\geq \lambda_3\geq \ldots \geq \lambda_{2m-1}),(\lambda_1
\geq \lambda_3\geq \ldots \geq \lambda_{2m-1}))$. In that case $\widetilde{\phi}_G^{-1}(\lambda,\mu)$ is a union of two strata, and the conjugacy classes in each of them have the same dimension.

If $g$ is any element in the stratum $G_{(\lambda,\varepsilon)}$ corresponding to a pair $(\lambda,\varepsilon)\in \widetilde{\mathcal{T}}_{2n}^2$, $\lambda=(\lambda_1
\geq \lambda_2\geq \ldots \geq \lambda_m)$ then the dimension of the centralizer of $g$ in $G$ is given by the following formula
\begin{equation}\label{dimsoev}
{\rm dim}~Z_G(g)=n+\sum_{i=1}^m(i-1)\lambda_i-\frac{1}{2}|\{i:\lambda_i ~\hbox{is odd}\}|-|\{i:\lambda_i ~\hbox{is even and}~\varepsilon(\lambda_i)=1\}|.
\end{equation}


\setcounter{equation}{0}
\setcounter{theorem}{0}

\section{The strict transversality condition}\label{stt}

In this section for every conjugacy class $\mathcal{C} \in C(W)$ we define a transversal slice $\Sigma_s$, $s\in \mathcal{C}$ such that every conjugacy class $\mathcal{O}\in G_\mathcal{C}$ intersects $\Sigma_s$ and
\begin{equation}\label{str1}
{\rm dim}~\mathcal{O}={\rm codim}~\Sigma_s.
\end{equation}
Recall that the definition of $\Sigma_s$ is given in terms of a system of positive roots $\Delta_+$ associated to $s$, and $\Delta_+$ depends on the choice of ordering of terms in decomposition (\ref{hdec}). It turns out that in order to fulfill condition (\ref{str1}) the subspaces $\h_i$ in (\ref{hdec}) should be ordered in such a way that if $\h_i=\h_\lambda^k$, $\h_j=\h_\mu^l$ and $0\leq\lambda <\mu< 1$ then $i<j$, where $\lambda$ and $\mu$ are eigenvalues of the corresponding matrix $I-M$ for $s$. In case of exceptional root systems this is verified using a computer program, and in case of classical root systems this is confirmed by explicit computation based on a technical lemma. In order to formulate this lemma we recall realizations of classical irreducible root systems.

Let $V$ be a real Euclidean $n$--dimensional vector space with an orthonormal basis $\varepsilon_1,\ldots ,\varepsilon_n$. The root systems of types $A_{n-1},B_n,C_n$ and $D_n$ can be realized in $V$ as follows.

\subsection*{$\bf A_n$} The roots are $\varepsilon_i-\varepsilon_j$, $1\leq i,j\leq n$, $i\neq j$, $\h_\mathbb{R}$ is the hyperplane in $V$ consisting of the points the sum of whose coordinates is zero.

\subsection*{$\bf B_n$} The roots are $\pm\varepsilon_i\pm\varepsilon_j$, $1\leq i<j\leq n$, $\pm \varepsilon_i$, $1\leq i\leq n$, $\h_\mathbb{R}=V$.

\subsection*{$\bf C_n$} The roots are $\pm\varepsilon_i\pm\varepsilon_j$, $1\leq i<j\leq n$, $\pm 2\varepsilon_i$, $1\leq i\leq n$, $\h_\mathbb{R}=V$.

\subsection*{$\bf D_n$} The roots are $\pm\varepsilon_i\pm\varepsilon_j$, $1\leq i<j\leq n$, $\h_\mathbb{R}=V$.

In all cases listed above the corresponding Weyl group $W$ is a subgroup of the Weyl group of type $C_n$ acting on the elements of the basis $\varepsilon_1,\ldots ,\varepsilon_n$ by permuting the basis vectors and changing the sign of an arbitrary subset of them. Each element $s\in W$ can be expressed as a product of disjoint cycles of the form
$$
\varepsilon_{k_1}\rightarrow \pm \varepsilon_{k_2}\rightarrow \pm \varepsilon_{k_3}\rightarrow \ldots \rightarrow  \pm \varepsilon_{k_r}\rightarrow \pm \varepsilon_{k_1}.
$$
The cycle above is of length $r$; it is called positive if $s^r(\varepsilon_{k_1})=\varepsilon_{k_1}$ and negative if $s^r(\varepsilon_{k_1})=-\varepsilon_{k_1}$. The lengths of the cycles together with their signs give a set of positive or negative integers called the signed cycle-type of $s$. To each positive cycle of $s$ of length $r$ there corresponds a pair of positive orbits $X,-X$, $|X|=r$, for the action of the group $<s>$ generated by $s$ on the set $E=\{\varepsilon_1,\ldots, \varepsilon_n,-\varepsilon_1,\ldots,-\varepsilon_n\}$, and to each negative cycle of $s$ of length $r$ there corresponds a negative orbit $X$, $|X|=2r$, for the action of $<s>$ on $E$. A positive cycle of length $1$ is called trivial. It corresponds to a pair of fixed points for the action of $<s>$ on $E$.

Now we formulate the main lemma.
\begin{lemma}\label{mainl}
Let $s$ be an element of the Weyl group of type $C_n$ operating on the set $E=\{\varepsilon_1,\ldots, \varepsilon_n,-\varepsilon_1,\ldots,-\varepsilon_n\}$ as indicated in Section \ref{luspart}, where $\varepsilon_1,\ldots ,\varepsilon_n$ is the basis of $V$ introduced above. Assume that $s$ has either only one nontrivial cycle of length $k/2$ ($k$ is even), which is negative, or only one nontrivial cycle of length $k$, which is positive, $1<k\leq n$. Let $\Delta$ be a root system of type $A_{n-1},B_n,C_n$ or $D_n$ realized in $V$ as above.

If $s$ has only one nontrivial cycle of length $k/2$, which is negative, then $k$ is even, the spectrum of $s$ in the complexification $V_\mathbb{C}$ of $V$ is $\epsilon_r=\exp(\frac{2\pi i (k-2r+1)}{k})$, $r=1,\ldots,k/2$, and possibly $\epsilon_0=1$, all eigenvalues are simple except for possibly $1$.

If $s$ only has one nontrivial cycle of length $k$, which is positive, then the spectrum of $s$ in the complexification of $V$ is $\epsilon_r=\exp(\frac{2\pi i (k-r)}{k})$, $r=1,\ldots,k-1$, and $\epsilon_0=1$, all eigenvalues are simple except for possibly $1$.

In both cases we denote by $V_r$ the invariant subspace in $V$ which corresponds to $\epsilon_r=\exp(\frac{2\pi i ([k/2]+1-r)}{k})$, $r=1,\ldots,\left[\frac{k}{2} \right]$ or $\epsilon_0=1$ in case of a positive nontrivial cycle and to $\epsilon_r=\exp(\frac{2\pi i (2\left[\frac{k/2+1}{2} \right]+1-2r)}{k})$, $r=1,\ldots,\left[\frac{k/2+1}{2} \right]$ or $\epsilon_0=1$ in case of a negative cycle. For $r\neq 0$ the space $V_r$ is spanned by the real and the imaginary parts of a nonzero eigenvector of $s$ in $V_\mathbb{C}$ corresponding to $\epsilon_r$, and $V_0$ is the subspace of fixpoints of $s$ in $V$.

$V_r$ is two--dimensional if $\epsilon_r\neq \pm 1$, one--dimensional if $\epsilon_r=-1$ or may have arbitrary dimension if $\epsilon_r=1$.

Let $\Delta_+$ be a system of positive roots associated to $s$ and defined as in Section \ref{trans}, where we use the decomposition
\begin{equation}\label{decv}
V=\bigoplus_{i} V_i
\end{equation}
as (\ref{hdec}) in the definition of $\Delta_+$. Denote by ${\overline{\Delta}}_{i}\subset \Delta$ the corresponding subsets of roots defined as in (\ref{di}).

Let $\Delta_0^s$ be the root subsystem fixed by the action of $s$ and $l(s)$ the number of positive roots which become negative under the action of $s$. Then if $s$ has only one nontrivial cycle of length $k$, which is positive, we have
\begin{enumerate}
\item if $\Delta=A_{n-1}$ then $\Delta_0^s=A_{n-k-1}$, $l(s)=2n-k-1$;

\item if $\Delta=B_{n}(C_n)$ then $\Delta_0^s=B_{n-k}(C_{n-k})$, $l(s)=4n-2k$ for odd $k$ and $l(s)=4n-2k+1$ for even $k$;

\item if $\Delta=D_{n}$ then $\Delta_0^s=D_{n-k}$, $l(s)=4n-2k-2$ for odd $k$ and $l(s)=4n-2k-1$ for even $k$.
\end{enumerate}

If $s$ has only one nontrivial cycle of length $\frac{k}{2}$,  which is negative, we have

\begin{enumerate}
\item if $\Delta=B_{n}(C_n)$ then $\Delta_0^s=B_{n-k/2}(C_{n-k/2})$, $l(s)=2n-k/2$;

\item if $\Delta=D_{n}$ then $\Delta_0^s=D_{n-k/2}$, $l(s)=2n-k/2-1$.
\end{enumerate}

If $s$ has only one nontrivial cycle of length $k$, which is positive, $\Delta$ is of type $B_n,C_n$ or $D_n$, and $k$ is even then
$\Delta={\overline{\Delta}}_{k/2}\bigcup {\overline{\Delta}}_{k/2-1}\bigcup \Delta_0^s$ (disjoint union),
and all roots in ${\overline{\Delta}}_{k/2-1}$ are orthogonal to the fixed point subspace for the action of $s$ on $V$.

In all other cases $\Delta={\overline{\Delta}}_{i_{\rm max}}\bigcup \Delta_0^s$ (disjoint union), where $i_{\rm max}$ is the maximal possible index $i$ which appears in decomposition (\ref{decv}).

\end{lemma}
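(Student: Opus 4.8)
The plan is to reduce the whole statement to the analysis of a single signed cycle and then to read off everything from the eigenvectors of $s$. Write $U\subset V$ for the span of those basis vectors $\varepsilon_j$ actually moved by the cycle, so that $V=U\oplus U^\perp$ with $s$ acting as the identity on $U^\perp$; here $\dim U=k$ for a positive $k$-cycle and $\dim U=k/2$ for a negative cycle of length $k/2$. Conjugating by a diagonal sign change, which preserves each of the root systems $B_n,C_n,D_n$ and is unnecessary in type $A_{n-1}$ where the cycle is already a pure permutation, I may assume the positive cycle is $\varepsilon_j\mapsto\varepsilon_{j+1}$ cyclically. Then $s|_U$ is the standard cyclic permutation with eigenvalues the $k$-th roots of unity $\exp(2\pi i j/k)$ and eigenvector $\sum_j\exp(-2\pi i j/k)\varepsilon_j$ for $\exp(2\pi i/k)$; for a negative cycle $s^{k/2}|_U=-1$, so the eigenvalues solve $z^{k/2}=-1$ and are exactly the $\exp(2\pi i j/k)$ with $j$ odd. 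Matching arguments against the stated index convention gives the lists of $\epsilon_r$, shows that every eigenvalue other than $1$ is simple, and identifies $V_r$ as the real span of a conjugate eigenvalue pair, two-dimensional unless $\epsilon_r=\pm1$. By the ordering prescribed in Section \ref{stt}, $i_{\max}$ is the index of the plane carrying the eigenvalue closest to $1$, namely $\exp(2\pi i/k)$, while the line on which $s=-1$ receives the smallest nonzero index.

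Next I would determine $\Delta_0^s$ and $l(s)$. A root is fixed by $s$ precisely when it lies in the fixed space, and for a nontrivial cycle no root supported on $U$ is fixed, the only exception being the transposition $k=2$ which I would handle directly; hence $\Delta_0^s$ is the standard subsystem on the $n-k$ (respectively $n-k/2$) untouched coordinates, giving $A_{n-k-1}$, $B_{n-k}$, $C_{n-k}$ or $D_{n-k}$ as claimed. For $l(s)$, the number of positive roots of the system $\Delta_+$ associated to $s$ in Section \ref{trans} that $s$ sends to negative roots, I would use the positivity criterion (\ref{wc}) together with the explicit action of the signed cycle on the roots $\pm\varepsilon_a\pm\varepsilon_b$, $\pm\varepsilon_a$ and $\pm2\varepsilon_a$; this reduces to a finite count carried out type by type. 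The even/odd dichotomy for $B_n,C_n,D_n$ reflects whether the eigenvalue $-1$ occurs, that is, whether $k$ is even, which accounts for the $+1$ correction in the stated formulas.

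The substance of the lemma is the decomposition of $\Delta$, which I would obtain from orthogonality to $V_{i_{\max}}$. Since no index exceeds $i_{\max}$, the set $\overline{\Delta}_{i_{\max}}$ consists precisely of the roots not orthogonal to $V_{i_{\max}}$, so it suffices to test each non-fixed root $\alpha$ against the complex eigenvector $v=\sum_j\exp(-2\pi i j/k)\varepsilon_j$ spanning $(V_{i_{\max}})_\mathbb{C}$. A difference root satisfies $\langle\varepsilon_a-\varepsilon_b,v\rangle=\exp(-2\pi i a/k)-\exp(-2\pi i b/k)$, which is nonzero whenever $a\neq b$ inside the cycle and is manifestly nonzero when one index lies outside $U$; hence in type $A_{n-1}$, where only difference roots occur, every non-fixed root meets $V_{i_{\max}}$ and $\Delta=\overline{\Delta}_{i_{\max}}\sqcup\Delta_0^s$. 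For $B_n,C_n,D_n$ the same test applied to $\varepsilon_a+\varepsilon_b$ gives $\exp(-2\pi i a/k)+\exp(-2\pi i b/k)=0$ iff $b-a\equiv k/2\pmod k$, which can happen only when $k$ is even and forces $b=a+k/2$ with both indices in $U$. I would then verify that these exceptional roots pair nontrivially with the eigenvector $\sum_j\exp(-4\pi i j/k)\varepsilon_j$ of $V_{k/2-1}$, since $\langle\varepsilon_a+\varepsilon_{a+k/2},\,\cdot\,\rangle=2\exp(-4\pi i a/k)\neq0$, and that, being supported on $U$, they are orthogonal to the fixed subsystem $\Delta_0^s$; this places them exactly in $\overline{\Delta}_{k/2-1}$ and yields the three-term decomposition, all remaining non-fixed roots lying in $\overline{\Delta}_{k/2}$. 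For a negative cycle the indices satisfy $|b-a|<k/2$, so no sum root can be orthogonal to $V_{i_{\max}}$ and the two-term decomposition persists.

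The main obstacle I anticipate is the bookkeeping of this exceptional even-$k$ case: one must confirm that the roots with $b=a+k/2$ are simultaneously orthogonal to $V_{i_{\max}}=V_{k/2}$, non-orthogonal to $V_{k/2-1}$, and orthogonal to $\Delta_0^s$, and crucially that no root falls into an intermediate plane $V_r$ with $1\le r\le k/2-2$; ruling these out needs the full list of eigenvectors rather than just the two extreme planes. Keeping the index conventions of the spectral decomposition (\ref{decv}) rigidly aligned with the ordering-by-argument of Section \ref{stt}, so that $i_{\max}$ genuinely corresponds to the eigenvalue nearest $1$, is where the argument is most delicate; by contrast, once this alignment is fixed, the negative-cycle cases and the type-by-type length counts are routine variants of the same two computations.
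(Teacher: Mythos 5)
Your overall architecture matches the paper's: reduce to a single signed cycle, write down the explicit eigenvectors, identify $\Delta_0^s$ as the subsystem on the untouched coordinates, and sort the remaining roots into the $\overline{\Delta}_i$ by pairing them against the eigenvectors. Your treatment of the partition of $\Delta$ is essentially the paper's computation in different notation (the paper records the same information as the projection formula (\ref{pr}) and likewise finds that the only roots orthogonal to $V_{k/2}$ are the $k$ sum roots with index difference $k/2$, which then land in $\overline{\Delta}_{k/2-1}$); the ``intermediate plane'' worry you flag is already resolved by that computation, since orthogonality to $V_{k/2}$ fails for every other non-fixed root.

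The genuine gap is the computation of $l(s)$, which is the main quantitative content of the lemma. You reduce it to ``a finite count carried out type by type'' using the positivity criterion (\ref{wc}), but you neither execute that count nor identify the mechanism that makes it tractable. The paper's device is an equidistribution argument: $s$ permutes $\overline{\Delta}_i$ and acts on $V_i$ as a rotation of order $m_i$, so the projections of the roots of $\overline{\Delta}_i$ onto $V_i$ are invariant under that rotation, and exactly $|\overline{\Delta}_i|/m_i$ of the positive roots of $\overline{\Delta}_i$ are sent to negative roots (their projections fill the sector swept out of the positive half-plane). Combined with the counts $|\overline{\Delta}_{k/2}|=4nk-2k^2-k$ and $|\overline{\Delta}_{k/2-1}|=k$ this gives $l(s)=4n-2k+1$ in two lines, and it is also what guarantees the answer is independent of the generic choices $h_i$. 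Without this observation your plan does not actually establish the stated formulas for $l(s)$, which are exactly what feeds the dimension identities in Theorem \ref{mainth}. A smaller point: for the roots of $\overline{\Delta}_{k/2-1}$ you verify orthogonality to the fixed subsystem $\Delta_0^s$, whereas the lemma asserts orthogonality to the fixed point subspace of $s$ on $V$, which also contains $\sum_{j=1}^k\varepsilon_j$; these are not the same check, and you should address the discrepancy rather than substitute one for the other.
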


\begin{proof}
The proof is similar in all cases. We only give details in the most complicated case when $s$ has only one nontrivial cycle, which is positive, $\Delta$ is of type $B_n(C_n)$, and $k$ is even. Without loss of generality one can assume that $s$ corresponds to the cycle of the form
$$
\varepsilon_1\rightarrow\varepsilon_2\rightarrow\varepsilon_4\rightarrow\varepsilon_6\rightarrow \cdots \rightarrow\varepsilon_{k-2}\rightarrow\varepsilon_k\rightarrow\varepsilon_{k-1}\rightarrow\varepsilon_{k-3}\rightarrow \cdots \rightarrow\varepsilon_3\rightarrow\varepsilon_1~(k>2),~ \varepsilon_1\rightarrow\varepsilon_2\rightarrow\varepsilon_1~(k=2).
$$

From this definition one easily sees that $\Delta_0^s=B_{n-k}(C_{n-k})=\Delta\bigcap V'$, where $V'\subset V$ is the subspace generated by $\varepsilon_{k+1},\ldots,\varepsilon_n$. Computing the eigenvalues of $s$ in $V_\mathbb{C}$ is a standard exercise in linear algebra. The eigenvalues are expressed in terms of the exponents of the root system of type $A_{k-1}$ (see \cite{Car}, Ch. 10).

The invariant subspace $V_r$ is spanned by the real and the imaginary parts of a nonzero eigenvector of $s$ in $V_\mathbb{C}$ corresponding to the eigenvalue $\epsilon_r$.
If $\epsilon_r\neq \pm 1$ then $V_r$ is two--dimensional, and for $\epsilon_r=-1$ $V_r$ is one--dimensional. In the former case $V_r$ will be regarded as the real form of a complex plane with the orthonormal basis $1,i$. Under this convention the orthogonal projection operator onto $V_r$ acts on the basic vectors $\varepsilon_j$ as follows
\begin{equation}\label{pr}
\varepsilon_{2j+1}\mapsto c\epsilon_r^j,~j=0,\ldots,\frac{k}{2}-1,\varepsilon_{2j}\mapsto c\epsilon_r^{-j},~j=1,\ldots,\frac{k}{2},
\end{equation}
where $c=\sqrt{\frac{2}{k}}$.
Consider the case when $k>2$; the case $k=2$ can be analyzed in a similar way.

To compute $l(s)$ using the definition of $\Delta_+$ given in Section \ref{trans} one should first look at all roots which have nonzero projections onto $V_{k/2}$ on which $s$ acts by rotation with the angle $\frac{2\pi }{k}$.

From (\ref{pr}) we deduce that the roots which are not fixed by $s$ and have zero orthogonal projections onto $V_{k/2}$ are $\pm(\varepsilon_j+\varepsilon_{k-j+1})$, $j=1,\ldots \frac{k}{2}$. The number of those roots is equal to $k$, and they all have nonzero orthogonal projections onto $V_{k/2-1}$. From (\ref{pr}) we also obtain that all the other roots which are not fixed by $s$ have nonzero orthogonal projections onto $V_{k/2}$, hence $|{\overline{\Delta}}_{k/2-1}|=k$. The number of roots fixed by $s$ is $2(n-k)^2$ since it is equal to the number of roots in $\overline{\Delta}_0=\Delta_0^s=B_{n-k}(C_{n-k})$. Hence $\Delta={\overline{\Delta}}_{k/2}\bigcup {\overline{\Delta}}_{k/2-1}\bigcup {\overline{\Delta}}_0$ (disjoint union), the number of roots in ${\overline{\Delta}}_{k/2}$ is   $|\Delta|-|{\overline{\Delta}}_0|-|{\overline{\Delta}}_{k/2-1}|=2n^2-2(n-k)^2-k=4nk-2k^2-k$, $|{\overline{\Delta}}_{k/2}|=4nk-2k^2-k$.

Now using the symmetry of the root system $\Delta$ as a subset of $V$ and the fact that $s$ acts as rotation by the angles $\frac{2\pi }{k}$ and $\frac{4\pi }{k}$ in $V_{k/2}$ and $V_{k/2-1}$, respectively, we deduce that the number of positive toots in ${\overline{\Delta}}_{k/2}$ (${\overline{\Delta}}_{k/2-1}$) which become negative under the action of $s$ is equal to the number of roots in ${\overline{\Delta}}_{k/2}$ (${\overline{\Delta}}_{k/2-1}$) divided by the order of $s$ in $V_{k/2}(V_{k/2-1})$. Therefore
$$
l(s)=\frac{|{\overline{\Delta}}_{k/2}|}{k}+\frac{|{\overline{\Delta}}_{k/2-1}|}{k/2}=\frac{4nk-2k^2-k}{k}+\frac{k}{k/2}=
4n-2k+1.
$$
This completes the proof in the considered case.
\end{proof}

Now we are in a position to prove the main statement of this paper.
\begin{theorem}\label{mainth}
Let $G$ be a connected semisimple algebraic group over an algebraically closed filed $\bf k$ of characteristic good for $G$, and $\mathcal{O}\in \widehat{\underline{\mathcal{N}}}(G)$. Let $H$ be a maximal torus of $G$, $W$ the Weyl group of the pair $(G,H)$, and $s\in W$ an element from the conjugacy class $\Psi^W(\mathcal{O})$. Let $\Delta$ be the root system of the pair $(G,H)$ and $\Delta_+$ the system of positive roots in $\Delta$ associated to $\Psi^W(\mathcal{O})$ and defined in Section \ref{trans} with the help of decomposition (\ref{hdec}), where the subspaces $\h_i$ are ordered in such a way that if $\h_i=\h_\lambda^k$, $\h_j=\h_\mu^l$ and $0\leq\lambda <\mu< 1$ then $i<j$. Then all conjugacy classes in the stratum $G_\mathcal{O}=\phi_G^{-1}(F(\mathcal{O}))$ intersect the corresponding transversal slice $\Sigma_s$ at some points of the subvariety $\dot{s}H_0N_s$, where $H_0\subset H$ is the centralizer of $\dot{s}$  in $H$, and for any $g\in G_\mathcal{O}$
\begin{equation}\label{dimid}
{\rm dim}~Z_G(g)={\rm dim}~\Sigma_s={\rm codim}_{G_p}~\mathcal{O},~\mathcal{O}\in \underline{\mathcal{N}}(G_p)\subset \widehat{\underline{\mathcal{N}}}(G).
\end{equation}
\end{theorem}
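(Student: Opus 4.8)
The plan is to prove the two assertions of Theorem~\ref{mainth} separately: the numerical identity~(\ref{dimid}), which is the source of the strict transversality condition~(\ref{str1}) and which dictates the ordering hypothesis on the $\h_i$, and the geometric statement that every class of $G_\mathcal{O}$ meets $\Sigma_s$ inside $\dot s H_0 N_s$. I would establish the numerical identity first, since it is the heart of the matter.

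Within~(\ref{dimid}) the second equality $\dim Z_G(g)=\mathrm{codim}_{G_p}\mathcal{O}$ is essentially formal. By construction $g$ lies in the stratum $G_\mathcal{C}$ with $\mathcal{C}=\Psi^W(\mathcal{O})$, so $\Phi^W(\mathcal{C})=\mathcal{O}$ and~(\ref{od}) gives $\dim\mathcal{O}_g=\dim\mathcal{O}$; since $\dim G_p=\dim G$ this yields $\dim Z_G(g)=\dim G-\dim\mathcal{O}_g=\mathrm{codim}_{G_p}\mathcal{O}$. The content lies in the first equality $\dim Z_G(g)=\dim\Sigma_s$, which is precisely~(\ref{str1}). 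Here I would read off from Proposition~\ref{crosssect} that $\dim\Sigma_s=\dim Z+\dim N_s$; one computes $\dim N_s=l(s)$, the number of positive roots sent to negative roots by $s$ (the roots of $\Delta_0^s$ being fixed contribute nothing), and $\dim Z=|\Delta_0^s|+\dim\h_0$, where $\h_0$ is the fixed space of $s$. On the other hand~(\ref{dimcon}) gives $\dim Z_G(g)=\mathrm{rank}~G+2b_{F(\mathcal{O})}$. Thus the whole identity reduces to checking $|\Delta_0^s|+\dim\h_0+l(s)=\mathrm{rank}~G+2b_{F(\mathcal{O})}$.

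The decisive step is to evaluate $l(s)$ together with the block decomposition $\Delta=\bigcup_k\overline{\Delta}_{i_k}$ determined by~(\ref{hdec}) and~(\ref{di}) under the prescribed ordering of the $\h_i$ (increasing eigenvalue $0\le\lambda<\mu<1$). For the classical types I would decompose $s$ into its signed cycles, as in~(\ref{scn}), apply Lemma~\ref{mainl} to obtain $\Delta_0^s$, $l(s)$ and the shape of the relevant $\overline{\Delta}_i$ for each nontrivial cycle, and then account for the roots joining distinct cycle-blocks; substituting the outcome into the explicit centralizer-dimension formulas~(\ref{dimsl}), (\ref{dimsp}), (\ref{dimsoodd}), (\ref{dimsoev}), together with the partition descriptions of $\Psi^W$, $\Phi^W$ and $F$, turns the desired equality into a partition identity that can be verified directly. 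For the exceptional types the same identity is confirmed by the computer program of Appendix~1. I expect this verification to be the principal obstacle: it is genuinely case-by-case, and the ordering hypothesis is indispensable, since it is precisely what forces the $\overline{\Delta}_i$ into the form computed in Lemma~\ref{mainl} and makes $l(s)$ assume the value matching $2b_{F(\mathcal{O})}$.

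For the geometric claim I would use that $s$ is a minimal-length representative of $\mathcal{C}$, so by Theorem~0.4 of~\cite{L4} and the description of the stratum in~\cite{L1} (Theorem~2.2) $G_\mathcal{O}$ is the union of the minimal-dimension conjugacy classes meeting the Bruhat cell $B\dot s B$. Combining this with the isomorphism $N\times\dot s Z N_s\to N\dot s Z N$ of Proposition~\ref{crosssect}, under which a class meeting $N\dot s Z N$ necessarily meets $\Sigma_s$, and with the inclusions of Bruhat cells in the Bruhat order needed to pass to classes in the closure, I would conclude that every class of $G_\mathcal{O}$ intersects $\Sigma_s$. Finally, once the dimensions are matched as above, transversality forces the intersection to have the expected dimension, and a direct analysis of representatives shows that it already occurs on the subvariety $\dot s H_0 N_s$, the full factor $Z$ being needed only to achieve transversality to all classes of $G$ rather than to those of the single stratum $G_\mathcal{O}$.
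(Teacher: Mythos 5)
Your first half---reducing (\ref{dimid}) to the identity $\underline{l}(s)+|\overline{\Delta}_0|+{\rm dim}~\h_0={\rm rank}~G+2b_{F(\mathcal{O})}$ and verifying it cycle by cycle via Lemma \ref{mainl} for the classical types and by the tables of Appendix 1 for the exceptional ones---is exactly the paper's argument and is sound.

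The geometric half is where your proposal has genuine gaps. First, you take for granted that $s$ is a minimal-length representative of $\Psi^W(\mathcal{O})$ with respect to the specially constructed system $\Delta_+$; this is not given and must be proved. The paper does it by starting from an arbitrary positive system $\Delta_+^1$ and a minimal-length representative $w$ that is elliptic in a parabolic Weyl subgroup $W(L_1,H)$ (\cite{Gk}, 3.1.14), showing ${\rm codim}_{M_1}\mathcal{O}_n=\underline{l}_1(w)$ by Theorem 0.7 of \cite{L4} (which also requires checking that $\mathcal{O}_n$ is actually a unipotent class of $M_1$, using ellipticity and the description of ${\pi}^{M_1}$), and then invoking the already-established dimension identity to get $\underline{l}_2(s)={\rm dim}~\Sigma_s'={\rm codim}_{M_2}\mathcal{O}_n'=\underline{l}_1(w)$; so the numerical part feeds into the geometric part in an essential way your outline does not capture. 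Second, a conjugacy class meeting $B\dot{s}B$ need not meet $N\dot{s}ZN$---there is no containment between these sets and no a priori reason an intersection point can be moved from one to the other. The paper's route is to contract, via a one-parameter subgroup of $Z_G(L_1)$ acting on the unipotent radical of $P_1$, any intersection point of $\gamma$ with $B_1\dot{w}B_1$ into the Levi cell $B_2\dot{w}B_2$ (minimality of ${\rm dim}~\gamma$ in the stratum ruling out the resulting drop of dimension in the closure), and then to use ellipticity of $s$ in $W(L_2,H)$ to get $B_3\subset HN$ and hence $B_3\dot{s}B_3\subset N\dot{s}HN$; an appeal to ``inclusions of Bruhat cells in the Bruhat order'' does not substitute for this. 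Third, getting from $N\dot{s}HN$ down to $N\dot{s}H_0N$, and hence to intersection points lying in $\dot{s}H_0N_s$, requires conjugating by the complementary torus $H'$ (possible because $1-s^{-1}$ is invertible on $\h'_{\bf k}$) together with the observation that the isomorphism of Proposition \ref{crosssect} preserves the $Z$-component; your closing appeal to transversality and ``a direct analysis of representatives'' leaves all of this unaddressed.
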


\begin{proof}
First we compute the dimension of the slice $\Sigma_s$, $s\in \Psi^W(\mathcal{O})$ and justify that for any $g\in G_\mathcal{O}$ equality (\ref{dimid}) holds.

Observe that by the definition of the slice $\Sigma_s$
$$
{\rm dim}~\Sigma_s=\underline{l}(s)+|\overline{\Delta}_0|+{\rm dim}~\h_0.
$$
Hence to compute ${\rm dim}~\Sigma_s$ we have to find all numbers in the right hand side of the last equality.

Consider the case of classical Lie algebras when each Weyl group element is a product of cycles in a permutation group. In this case the identity (\ref{dimid}) is proved by a straightforward calculation using Lemma \ref{mainl}.

If $G$ is of type $A_n$ let $s$ be a representative in the conjugacy class of the Weyl group which corresponds to a partition $\lambda=(\lambda_1\geq \lambda_2\geq \ldots\geq \lambda_m)$. The particular ordering of the invariant subspaces $\h_i$ in the formulation of this theorem implies that the length $\underline{l}(s)$ should be computed by successive application of Lemma \ref{mainl} to the cycles $s_i$ of $s$, which correspond to $\lambda_i$ placed in a non--increasing order.

We should first apply Lemma \ref{mainl} to the cycle $s_1$ of $s$ which corresponds to the maximal part $\lambda_1$. In this case $l(s_1)=2n-\lambda_1+1$ and $\Delta_0^{s_1}=A_{n-\lambda_1}=\Delta\setminus \overline{\Delta}_{i_T}$ in the notation of Section \ref{trans}. The remaining cycles $s_2,\ldots ,s_m$ of $s$ corresponding to $\lambda_2\geq \ldots\geq \lambda_m$ act on $\Delta_0^{s_1}$, and we can apply Lemma \ref{mainl} to $s_2$ acting on $\Delta_0^{s_1}$ to get $l(s_2)=2(n-\lambda_1)-\lambda_2+1$ and $\Delta_0^{s_2}=A_{n-\lambda_1-\lambda_2}=\Delta\setminus \left(\overline{\Delta}_{i_T}\bigcup \overline{\Delta}_{i_{T-1}}\right)$. Iterating this procedure we obtain that
\begin{equation}\label{ls}
\underline{l}(s)= \sum l(s_k),~l(s_k)=2(n-\sum_{i=1}^{k-1}\lambda_i)-\lambda_k+1,
\end{equation}
where $\underline{l}(s)$ is the length of $s$ and the first sum in (\ref{ls}) is taken over $k$ for which $\lambda_k>1$.

The number of roots fixed by $s$ can be represented in a similar form,
\begin{equation}\label{fx}
 |\overline{\Delta}_0|= \sum l(s_k),~l(s_k)=2(n-\sum_{i=1}^{k-1}\lambda_i)-\lambda_k+1,
\end{equation}
where the sum in (\ref{fx}) is taken over $k$ for which $\lambda_k=1$.

Finally the dimension of the fixed point space $\h_0$ of $s$ in $\h$ is $m-1$, ${\rm dim}~\h_0=m-1$.

Observe now that
\begin{equation}\label{dimsect}
 {\rm dim}~\Sigma_s=\underline{l}(s)+|\overline{\Delta}_0|+{\rm dim}~\h_0,
\end{equation}
and hence
$$
{\rm dim}~\Sigma_s=\sum_{k=1}^m l(s_k)+m-1=\sum_{k=1}^m\left(2(n-\sum_{i=1}^{k-1}\lambda_i)-\lambda_k+1\right)+m-1.
$$

Exchanging the order of summation and simplifying this expression we obtain that
$$
{\rm dim}~\Sigma_s=n+2\sum_{i=1}^m(i-1)\lambda_i
$$
which coincides with (\ref{dimsl}).

The computations of ${\rm dim}~\Sigma_s$ in case of $B_n$ and of $C_n$ are the same. If $(\nu, \varepsilon)\in \mathcal{T}_{2n}^2$, $\nu=(\nu_1\geq \nu_2\geq \ldots \geq \nu_m)$, corresponds to $\mathcal{O}\in \widehat{\underline{\mathcal{N}}}(G)=\underline{\mathcal{N}}(G_2)$ then $\Psi^W(\nu, \varepsilon)=(\lambda,\mu)\in \mathcal{A}^1_{2n}\simeq\underline{W}$ is defined in Section \ref{luspart}, part $\bf C_n$. $\lambda$ consists of even parts $\nu_i$ of $\nu$ for which $\varepsilon(\nu_i)=1$, and $\mu$ consists of all odd parts of $\nu$ and of even parts $\nu_i$ of $\nu$ for which $\varepsilon(\nu_i)=0$, the last two types of parts appear in pairs of equal parts. Let $s$ be a representative in the conjugacy class $\Psi^W(\nu, \varepsilon)$. Then  each part $\lambda_i$ corresponds to a negative cycle of $s$ of length $\frac{\lambda_i}{2}$, and each pair $\mu_i=\mu_{i+1}$ of equal parts of $\mu$ corresponds to a positive cycle of $s$ of length $\mu_i$. We order the cycles $s_k$ of $s$ associated to the (pairs of equal) parts of the partition $\nu$ in a way compatible with a non--increasing ordering of the parts of the partition $\nu=(\nu_1\geq \nu_2\geq \ldots \geq \nu_m)$, i.e. if we denote by $s_k$ the cycle that corresponds to an even part $\nu_k$ of $\nu$ for which $\varepsilon(\nu_k)=1$ or to a pair $\nu_k=\nu_{k+1}$ of odd parts of $\nu$ or of even parts of $\nu$ for which $\varepsilon(\nu_k)=0$ then $s_k\geq s_l$ if $\nu_k\geq \nu_l$.

Similarly to the case of $A_n$, by the definition of $\Delta_+$ and by Lemma \ref{mainl} applied iteratively to the cycles $s_k$ in the order defined above, the length $\underline{l}(s)$ of $s$ is the sum of the following terms $l(s_k)$.

To each even part $\nu_k$ of $\nu$ for which $\varepsilon(\nu_k)=1$ we associate the term $$l(s_k)=2(n-\sum_{i=1}^{k-1}\frac{\nu_i}{2})-\frac{\nu_k}{2};$$

to each pair of odd parts $\nu_k=\nu_{k+1}>1$ we associate the term
$$l(s_k)=4(n-\sum_{i=1}^{k-1}\frac{\nu_i}{2})-2\nu_k=
\left(2(n-\sum_{i=1}^{k-1}\frac{\nu_i}{2})-\frac{\nu_k}{2}\right)+\left(2(n-\sum_{i=1}^{k}\frac{\nu_i}{2})-
\frac{\nu_{k+1}}{2}\right);$$
note that the sum of these terms over all pairs $\nu_k=\nu_{k+1}=1$ gives the number $|\overline{\Delta}_0|$ of the roots fixed by $s$;

to each pair of even parts $\nu_k=\nu_{k+1}$ for which $\varepsilon(\nu_k)=0$ we associate the term
$$l(s_k)=4(n-\sum_{i=1}^{k-1}\frac{\nu_i}{2})-2\nu_k+1=
\left(2(n-\sum_{i=1}^{k-1}\frac{\nu_i}{2})-\frac{\nu_k}{2}+\frac{1}{2}\right)+\left(2(n-\sum_{i=1}^{k}\frac{\nu_i}{2})-
\frac{\nu_{k+1}}{2}+\frac{1}{2}\right).$$

The dimension of the fixed point space $\h_0$ of $s$ in $\h$ is equal to a half of the sum of the number of all even parts $\nu_k$ for which $\varepsilon(\nu_k)=0$ and of the number of all odd parts $\nu_k$,

\begin{equation}\label{hofix}
{\rm dim}~\h_0= \frac{1}{2}|\{i:\nu_i ~\hbox{is odd}\}|+\frac{1}{2}|\{i:\nu_i ~\hbox{is even and}~\varepsilon(\nu_i)=0\}|.
\end{equation}

Finally substituting all the computed contributions into formula (\ref{dimsect}) we obtain
\begin{eqnarray*}
{\rm dim}~\Sigma_s = \sum_{k=1}^m\left(2(n-\sum_{i=1}^{k-1}\frac{\nu_i}{2})-\frac{\nu_k}{2}\right)+\frac{1}{2}|\{i:\nu_i ~\hbox{is even and}~\varepsilon(\nu_i)=0\}|+ \\
    +\frac{1}{2}|\{i:\nu_i ~\hbox{is odd}\}|+\frac{1}{2}|\{i:\nu_i ~\hbox{is even and}~\varepsilon(\nu_i)=0\}|.
\end{eqnarray*}

Exchanging the order of summation and simplifying this expression we obtain that
\begin{equation}\label{dimsp1}
{\rm dim}~\Sigma_s= n+\sum_{i=1}^m(i-1)\nu_i+\frac{1}{2}|\{i:\nu_i ~\hbox{is odd}\}|+|\{i:\nu_i ~\hbox{is even and}~\varepsilon(\nu_i)=0\}|
\end{equation}
which coincides with (\ref{dimsp}) or (\ref{dimsoodd}).

In case of $D_n$ the number ${\rm dim}~\Sigma_s$ can be easily obtained if we observe that the map $\widetilde{\Psi}^W$ is defined by the same formula as $\Psi^W$ in case of $C_n$. In case when $\widetilde{\Psi}^W(\nu,\varepsilon)=(-,\mu)$, where all parts of $\mu$ are even, there are two conjugacy classes in $W$ which correspond to $\widetilde{\Psi}^W(\nu,\varepsilon)$. However, the numbers $\underline{l}(s)$, $|\overline{\Delta}_0|$ and ${\rm dim}~\h_0$ are the same in both cases. They only depend on $\widetilde{\Psi}^W(\nu,\varepsilon)$ in all cases. Let $s\in W$ be a representative from the conjugacy class $\widetilde{\Psi}^W(\nu,\varepsilon)$, $\nu=(\nu_1\geq \nu_2\geq \ldots \geq \nu_m)$.

From Lemma \ref{mainl} we deduce that in case of $D_n$ the contributions of the cycles $s_k$ of $s$ to the formula for ${\rm dim}~\Sigma_s$ can be obtained from the corresponding contributions in case of $C_n$ in the following way:
for each pair of odd parts $\nu_k=\nu_{k+1}$ and for each pair of even parts $\nu_k=\nu_{k+1}$ with $\varepsilon(\nu_k)=0$ the corresponding contribution $l(s_k)$ to $\underline{l}(s)$ should be reduced by 2 and for each even part $\nu_k$ of $\nu$ with $\varepsilon(\nu_k)=1$ the corresponding contribution $l(s_k)$ to $\underline{l}(s)$ should be reduced by 1. This observation and formula (\ref{dimsp1}) yield
\begin{eqnarray*}
{\rm dim}~\Sigma_s= n+\sum_{i=1}^m(i-1)\nu_i+\frac{1}{2}|\{i:\nu_i ~\hbox{is odd}\}|+|\{i:\nu_i ~\hbox{is even and}~\varepsilon(\nu_i)=0\}|- \\
-|\{i:\nu_i ~\hbox{is odd}\}|-|\{i:\nu_i ~\hbox{is even}\}|= \\
= n+\sum_{i=1}^m(i-1)\nu_i-\frac{1}{2}|\{i:\nu_i ~\hbox{is odd}\}|-|\{i:\nu_i ~\hbox{is even and}~\varepsilon(\nu_i)=1\}|
\end{eqnarray*}
which coincides with (\ref{dimsoev}).

In case of root systems of exceptional types ${\rm dim}~\Sigma_s$ can be found in the tables in Appendix 1. According to those tables equality (\ref{dimid}) holds in all cases.

Now we show that all conjugacy classes in the stratum $G_\mathcal{O}=\phi_G^{-1}(F(\mathcal{O}))$ intersect the corresponding transversal slice $\Sigma_s$, $s\in \Psi^W(\mathcal{O})$. Fix an arbitrary system of positive roots $\Delta_+^1$ in $\Delta=\Delta(G,H)$. Let $B_1$ be the Borel subgroup in $G$ corresponding to $\Delta_+^1$. By \cite{Gk}, 3.1.14 one can find a representative $w\in \Psi^W(\mathcal{O})$ of minimal length with respect to the system of simple roots in $\Delta_+^1$ such that there is a parabolic subgroup $P_1\supset B_1$ of $G$ for which $w$ is elliptic in the Weyl group $W(L_1,H)$ of the pair $(L_1,H)$, where $H\subset B_1$ is a maximal torus contained in $B_1$ and $L_1$ is the Levi factor of $P_1$. One can always find a representative $\dot{w}\in L_1$. The stratum $G_\mathcal{O}$ consists of all conjugacy classes of minimal possible dimension which intersect the Bruhat cell $B_1\dot{w}B_1$. Denote by $U_1$ the unipotent radical of $P_1$. Then by the definition of parabolic subgroups one can always find a one parameter subgroup $\rho:{\bf k}^*\rightarrow Z_G(L_1)$ such that
\begin{equation}\label{lm}
\lim_{t\rightarrow 0}\rho(t)n\rho(t^{-1})=1
\end{equation}
for any $n\in U_1$.

Now denote by $B_2=B_1\bigcap L_1$ the Borel subgroup in $L_1$, and let $\gamma \in G_\mathcal{O}$ be a conjugacy class which intersects $B_1\dot{w}B_1$ at point $b\dot{w}b'$, $b,b'\in B_1$ such that $b\dot{w}b'\not \in B_2\dot{w}B_2$. Since by definitions of $B_2$ and $U_1$ we have $B_1=B_2U_1$ there are unique factorizations $b=un$, $b'=u'n'$, $u,u'\in B_2$, $n,n'\in U_1$.
By (\ref{lm}) we have
$$
\lim_{t\rightarrow 0}\rho(t)b\dot{w}b'\rho(t^{-1})=\lim_{t\rightarrow 0}u\rho(t)n\rho(t^{-1})\dot{w}u'\rho(t)n'\rho(t^{-1})=u\dot{w}u'\in B_2\dot{w}B_2,
$$
and hence the closure of $\gamma$ contains a conjugacy class $\gamma'$ which intersects $B_1\dot{w}B_1$ at some point of $B_2\dot{w}B_2\subset B_1\dot{w}B_1$. In particular, ${\rm dim}~\gamma > {\rm dim}~\gamma'$. This is impossible by the definition of $G_\mathcal{O}$, and hence $\gamma$ intersects $B_1\dot{w}B_1$ at some point of $B_2\dot{w}B_2\subset B_1\dot{w}B_1$. Without loss of generality we can assume that $w$ is of minimal length in its conjugacy class in $W(L_1,H)$, with respect to the system of simple positive roots associated to $B_2$ (see \cite{Gk}, 3.1.14).

Let $M_1$ be the semisimple part of $L_1$ and $\mathcal{O}_n=\Phi^{W_1}(\mathcal{O}_w)\subset M_1$, where $\mathcal{O}_w$ is the conjugacy class of $w$ in the Weyl group $W_1=W(L_1,H)$. We claim that $\mathcal{O}_n\in {\underline{{\mathcal{N}}}}(M_1)$. This is a consequence of the fact that $w$ is elliptic. Indeed, in case when $M_1$ is of type $A_n$ this is obvious since ${\underline{\widehat{\mathcal{N}}}}(M_1)$ contains only unipotent classes. In fact in this case $\mathcal{O}_w$ is the Coxeter class, and $\mathcal{O}_n$ is the class of regular unipotent elements. If $M_1$ is of type $B_n$, $C_n$ or $D_n$, formula (\ref{hofix}) implies that if $\mathcal{O}_n$ corresponds to $(\nu,\varepsilon)\in \mathcal{T}^2_{2n}(\widetilde{\mathcal{T}}^2_{2n})$ then $\nu$ has no odd parts and no even parts $\nu_i$ with $\varepsilon(\nu_i)=0$. According to the description given in the previous section the map ${{\pi}}^{M_1}({\widetilde{\pi}}^{M_1})$ is injective and its image consists of pairs $(\nu, \varepsilon)\in {\mathcal{T}}_{2n}^2(\widetilde{\mathcal{T}}_{2n}^2)$ such that $\varepsilon(k)\neq 0$ if $\nu_k^*$ is odd and for each even $i$ such that $\nu_i^*$ is even we have $\nu_{i-1}^*=\nu_i^*$, i.e. $i-1$ does not appear in the partition $\nu$. We deduce that $\mathcal{O}_n$ is contained in the image of ${{\pi}}^{M_1}({\widetilde{\pi}}^{M_1})$, i.e. $\mathcal{O}_n\in {\underline{{\mathcal{N}}}}(M_1)$ is a unipotent class in $M_1$. In case when $M_1$ is of exceptional type this can be checked by examining the tables in Appendix 1.

Therefore $\mathcal{O}_n$ is the unipotent class of minimal possible dimension which intersects $B_2\dot{w}B_2$.
By Theorem 0.7 in \cite{L4} the codimension of $\mathcal{O}_n$ in $M_1$ is equal to $\underline{l}_1(w)$, where $\underline{l}_1$ is the length function in $W_1$ with respect to the system of simple roots in $\Delta(L_1,H)_+=\Delta_+^1\bigcap\Delta(L_1,H)$,
\begin{equation}\label{cdm}
{\rm codim}_{M_1}~\mathcal{O}_n=\underline{l}_1(w).
\end{equation}

Now let $\Delta_+$ be the positive root system associated to $s\in \Psi^W(\mathcal{O})$ as in the formulation of this theorem.
Let $L_2\subset G$ be the subgroup of Levi type such that $s$ is elliptic in the Weyl group of the pair $(L_2,H)$, $H\subset L_2$. $L_2$ exists since $w$ and $s$ are in the same Weyl group conjugacy class.
We claim that $s$ has minimal length in the Weyl group $W_2=W(L_2,H)$ with respect to the system of positive simple roots associated to $B_3$, where $B_3=B\bigcap L_2$, and $B$ is the Borel subgroup associated to $\Delta_+$.

Indeed, let $M_2$ be the semisimple part of $L_2$, and $\Sigma_s'=\Sigma_s\bigcap M_2$. Note that by the definition of $s$ and $w$ we have the following isomorphisms $W_1\simeq W_2$, $M_1\simeq M_2$. Let $\mathcal{O}_n'\subset M_2$ be the image in $M_2$ of the unipotent class $\mathcal{O}_n$ under the latter isomorphism.

By the definition we have $\mathcal{O}_n'=\Phi^{W_2}(\mathcal{O}_s)$, where $\mathcal{O}_s$ is the conjugacy class of $s$ in the Weyl group $W_2=W(L_2,H)$.
Therefore formula (\ref{dimid}) confirmed in the first part of this proof, formula (\ref{dimsect}) and the fact that $s$ is elliptic in $W_2$ imply that
$$
{\rm codim}_{M_1}~\mathcal{O}_n={\rm codim}_{M_2}~\mathcal{O}_n'={\rm dim}~\Sigma_s'=\underline{l}_2(s),
$$
where $\underline{l}_2$ is the length function in $W_2$ with respect to the system of simple roots in $\Delta(L_2,H)_+=\Delta_+\bigcap\Delta(L_2,H)$.

The last formula and (\ref{cdm}) yield $\underline{l}_1(w)=\underline{l}_2(s)$, and hence $s$ has minimal possible length in its conjugacy class in $W_2$ with respect to the system of simple roots in $\Delta(L_2,H)_+$. Therefore without loos of generality (see \cite{Gk}, 3.1.14) we can assume that $w$ is chosen in such a way that $vwv^{-1}=s$, $\dot{v}M_1\dot{v}^{-1}=M_2$ and $v(\Delta(L_1,H)_+)=\Delta(L_2,H)_+$ for some $v\in W$. In particular, $\dot{v}B_2\dot{v}^{-1}=B_3$, and  $\dot{v}B_2\dot{w}B_2\dot{v}^{-1}=B_3\dot{s}B_3$

Now by the property of $w$ proved above any conjugacy class $\gamma \in G_\mathcal{O}$ intersects $B_2\dot{w}B_2$, and hence it also intersects $B_3\dot{s}B_3\subset B\dot{s}B$. But by the definition of $L_2$ $s$ acts on the root system of the pair $(L_2,H)$ without fixed points. Since $B_3=B\bigcap L_2$ and $s$ fixes all the roots of the pair $(ZH,H)$ we have an inclusion $B_3\subset HN$, where $N$ is the unipotent radical of the parabolic subgroup $P$ attached to $s$. Hence  $B_3\dot{s}B_3\subset N\dot{s}HN$.

Let $H_0$ be the centralizer of $\dot{s}$ in $H$.
Let $\h_{\bf k}$ be the Lie algebra of $H$ and $\h_{\bf k}^0$ the Lie algebra of $H_0$. Let $\h'_{\bf k}$ be the $s$--invariant complementary subspace to $\h_{\bf k}^0$ in $\h_{\bf k}$. Since $\h_{\bf k}$ is abelian $\h'_{\bf k}\subset \h_{\bf k}$ is a Lie algebra. Let $H'\subset H$ be the subgroup which corresponds to $\h'_{\bf k}$ in $H$.
If $h,h'\in H'$, $h=e^x,h'=e^y$, $x,y\in \h'_{\bf k}$ then
$$
h'\dot{s}h(h')^{-1}=\dot{s}e^{(s^{-1}-1)x+y},
$$
and for any $y$ one can find a unique $x=\frac{1}{1-s^{-1}}y$ such that $h'\dot{s}h(h')^{-1}=\dot{s}$. Note also that $H$ normalizes $N$. Therefore the factorization $H=H_0H'$ implies that any element of $N\dot{s}HN$ can be conjugated by an element of $H'$ to an element from $N\dot{s}H_0N$.

Finally observe that $N\dot{s}H_0N\subset N\dot{s}ZN$, and hence any conjugacy class $\gamma \in G_\mathcal{O}$ intersects $N\dot{s}H_0N\subset N\dot{s}ZN$. By Proposition \ref{crosssect} $\gamma$ also intersects $\Sigma_s$. The proof of Proposition \ref{crosssect} in \cite{S6} implies that the $Z$--component of any element from $N\dot{s}ZN$ is equal to the $Z$--component in $\Sigma_s=\dot{s}ZN_s$ of its image under the isomorphism $N\dot{s}ZN\simeq N\times \dot{s}ZN_s$. Therefore  any conjugacy class $\gamma \in G_\mathcal{O}$ intersects  $\Sigma_s$ at some point of $\dot{s}H_0N_s$.
This completes the proof.

\end{proof}


\setcounter{equation}{0}
\setcounter{theorem}{0}

\section{Some axillary results}\label{aux}

In this section we obtain two axillary numeric results which are required for the study of representations of quantum groups at roots of unity.

\begin{proposition}\label{cQ}
Let $\Delta$ be an irreducible root system, $\Delta_+$ the system of positive roots associated to a conjugacy class of a Weyl group element $s\in W$ in Theorem \ref{mainth}, $s=s_{\gamma_1}\ldots s_{\gamma_{l'}}$ representation (\ref{sinv}) for $s$.
Let $q$ be a positive integer defined by the tables in Appendix 1 for some irreducible exceptional root systems. Assume that $q$ is not defined for the other exceptional root systems and for irreducible root systems of classical types. Let $m$ be an odd positive integer which is not divisible by $q$ if $q$ is defined. Then for any integers $p_j\in \mathbb{Z}$, $j=1,\ldots ,l$ the system of equations $Y_j(\sum_{i=1}^{l'}m_i\gamma_i)= mp_j$, $j=1,\ldots ,l$ for unknowns $m_i\in \{-m+1,\ldots ,m-1\}$, $i=1,\ldots ,l'$ has no nontrivial solutions; here $Y_j$ are defined by $Y_j(\alpha_i)=d_i\delta_{ij}$, and $\alpha_i$ are the simple roots of $\Delta_+$.
\end{proposition}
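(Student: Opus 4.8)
The plan is to convert the linear system into a single congruence condition modulo $m$, encode that condition in the elementary divisors of one integer matrix, and reduce the whole statement to the assertion that $m$ is coprime to one such elementary divisor; the integer $q$ will turn out to be exactly the odd part of that elementary divisor, trivial for classical types and tabulated in Appendix 1 otherwise. First I would reformulate the system. Since the $\gamma_i$ are roots, write $\gamma_i=\sum_k c^{(i)}_k\alpha_k$ with $c^{(i)}_k\in\mathbb{Z}$ and put $x=\sum_i m_i\gamma_i=\sum_k c_k\alpha_k$, so that $c_k=\sum_i m_i c^{(i)}_k\in\mathbb{Z}$. From $Y_j(\alpha_i)=d_i\delta_{ij}$ we get $Y_j(x)=d_jc_j$; hence for a fixed $(m_i)$ the equations $Y_j(x)=mp_j$ admit integers $p_j$ if and only if $d_jc_j\equiv 0\ (\mathrm{mod}\ m)$ for every $j$. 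Thus the proposition is equivalent to the assertion that the only $(m_i)$ with $|m_i|\le m-1$ and $d_j\big(\sum_i m_i c^{(i)}_j\big)\equiv 0\ (\mathrm{mod}\ m)$ for all $j$ is $(m_i)=0$.

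Next I would record these congruences by the $l\times l'$ integer matrix $\Gamma$ with entries $\Gamma_{ji}=Y_j(\gamma_i)=d_jc^{(i)}_j$, i.e. $\Gamma=DC$ with $D=\mathrm{diag}(d_1,\dots,d_l)$ and $C=(c^{(i)}_j)$. As $\gamma_1,\dots,\gamma_{l'}$ are linearly independent, $C$ and hence $\Gamma$ has rank $l'$. The congruences say precisely that the reduction $\bar\Gamma\colon(\mathbb{Z}/m)^{l'}\to(\mathbb{Z}/m)^l$ annihilates the class of $(m_i)$. Choosing a Smith normal form $\Gamma=U\,\mathrm{diag}(e_1,\dots,e_{l'})\,V$ with $U\in\mathrm{GL}_l(\mathbb{Z})$, $V\in\mathrm{GL}_{l'}(\mathbb{Z})$ and $e_1\mid\cdots\mid e_{l'}$, one finds
\[
\ker\bar\Gamma\ \cong\ \bigoplus_{i=1}^{l'}\mathbb{Z}/\gcd(e_i,m)\mathbb{Z},
\]
so that $\bar\Gamma$ is injective exactly when $\gcd(e_{l'},m)=1$. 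If $\bar\Gamma$ is injective then any solution has $m\mid m_i$ for all $i$, and the constraint $|m_i|\le m-1$ forces $m_i=0$. Therefore it suffices to prove $\gcd(e_{l'},m)=1$, and since $m$ is odd this reduces to controlling the odd part of the largest elementary divisor $e_{l'}$ of $\Gamma$.

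It remains to bound this odd part. The diagonal factor $D$ can only introduce the primes dividing the $d_j$, namely $2$ (in types $B,C,F_4$) and $3$ (in type $G_2$); the prime $2$ is harmless because $m$ is odd, and the prime $3$ arising in $G_2$ is among the divisors of $q$. The essential contribution is the odd part of the largest elementary divisor of $C$, which is the odd part of the index $[\,Q\cap\h':\sum_i\mathbb{Z}\gamma_i\,]$, where $Q=\sum_i\mathbb{Z}\alpha_i$ is the root lattice. For the classical types I would compute this index directly in the $\varepsilon$--coordinate realization and signed--cycle description of Section \ref{stt}, applying Lemma \ref{mainl} cycle by cycle: one checks that $\sum_i\mathbb{Z}\gamma_i$ has $2$--power index in $Q\cap\h'$, so its odd part is $1$ and $q$ need not be defined. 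For the exceptional types the odd part of $e_{l'}$ is precisely the integer $q$ of Appendix 1, so the hypotheses that $m$ is odd and $\gcd(m,q)=1$ yield $\gcd(e_{l'},m)=1$, as required.

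The formal part of the argument (congruence $\Rightarrow$ elementary divisors $\Rightarrow$ injectivity $\Rightarrow$ vanishing via $|m_i|<m$) is routine; the main obstacle is the last step, namely verifying that the odd part of $e_{l'}(\Gamma)$ is trivial in every classical case and equals the tabulated $q$ in the exceptional ones. This is exactly where the explicit combinatorics of $s$ enters: the ordering of the subspaces $\h_i$ fixed in Theorem \ref{mainth} pins down the roots $\gamma_i$ in representation (\ref{sinv}) and hence the matrix $\Gamma$ whose elementary divisors are being estimated, the classical verification resting on Lemma \ref{mainl} and the exceptional one on the machine computation recorded in Appendix 1.
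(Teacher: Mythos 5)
Your reduction is sound and is, at bottom, the same strategy as the paper's, but packaged differently. The paper also observes that the system amounts to asking whether $\sum_i x_i\gamma_i$ with $x_i=m_i/m$ can lie in the lattice $\sum_j \frac{p_j}{d_j}\mathbb{Z}\alpha_j$; its mechanism for ruling this out in the classical cases is to complete $\gamma_1,\dots,\gamma_{l'}$ to a basis $\Upsilon$ of $\h^*$ by simple roots of a well-chosen positive system $\Delta'_+$ (one in which $s$ is elliptic in a parabolic subgroup), and to verify block by block (for Carter blocks $A_n$, $B_n$, $C_n$, $D_{i+j}(a_{j-1})$) that every simple root has half-integer coordinates in $\Upsilon$; this forces the denominators of the $x_i$ to divide $4$, which is incompatible with $m$ odd. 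Your Smith-normal-form formulation is equivalent: the existence of such a $\Upsilon$ is exactly the statement that $(Q\cap\h'_{\mathbb{Q}})/\sum_i\mathbb{Z}\gamma_i$ is $2$-torsion, i.e.\ that the odd part of the last elementary divisor of your matrix $C$ is trivial, and the diagonal factor $D$ only contributes the prime $2$ in classical types. Your version has the merit of showing cleanly that the proposition is \emph{equivalent} to a coprimality condition on a single integer attached to $s$, and of explaining conceptually what $q$ is (up to the small caveat that the hypothesis is ``$q\nmid m$'' rather than ``$\gcd(q,m)=1$'', which coincide only because the tabulated $q$ are prime).

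The one substantive gap is that the classical-case verification, which is the actual content of the paper's proof, is only asserted in your plan (``one checks that $\sum_i\mathbb{Z}\gamma_i$ has $2$-power index in $Q\cap\h'$''), and the tool you cite for it, Lemma \ref{mainl}, is not the right one: that lemma computes lengths, fixed root subsystems and eigenvalues, and says nothing about lattice indices. What is actually needed is Carter's block decomposition of the conjugacy class of $s$ together with an explicit choice of the roots $\gamma_i$ for each block in $\varepsilon$-coordinates; the paper carries this out by exhibiting, for each block type, the extra basis vectors and the half-integer expressions of the discarded simple roots (the only nontrivial case being $D_{i+j}(a_{j-1})$, where a genuine factor of $2$ appears). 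Your proposal would be complete once this case-by-case computation of the index is supplied; as written it correctly isolates the statement to be checked but does not check it.
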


\begin{proof}
If $\Delta$ is of exceptional type the statement of the theorem is verified with the help of a computer program (see Appendix 1).

If $\Delta$ is of classical type, we rewrite the system of equations in question in the form
\begin{equation}\label{gsys}
\sum_{i=1}^{l'}~x_i\gamma_i = \sum_{j=1}^{l}\frac{p_j}{d_j}\alpha_j,~x_i=\frac{m_i}{m}.
\end{equation}
We have to show that for any $p_j\in \mathbb{Z}$ this system has no nontrivial rational solutions $-1< x_i<1$, $i=1,\ldots , l'$ such that $m$ is divisible by the lowest common multiple of the denominators of $x_i$, $i=1,\ldots , l'$.

Let $\Delta_+'$ be a system of positive roots in $\Delta$, $\alpha_1',\ldots\alpha_l'$ the system of simple roots in $\Delta'_+$. Fix $\Delta_+'$ in such a way that $s$ is elliptic in a parabolic Weyl subgroup $W'\subset W$ generated by the simple reflections corresponding to roots from a subset of $\alpha_1',\ldots\alpha_l'$ (see the proof of Theorem \ref{mainth}). We claim that the set $\gamma_1,\ldots ,\gamma_{l'}$ can be completed to a basis $\Upsilon$ of $\h^*$ by elements from the set $\alpha_1',\ldots\alpha_l'$ so that every element of the set $\alpha_1',\ldots\alpha_l'$ can be expressed as a linear combination of elements from $\Upsilon$ with half integer coefficients.

Assuming that it is true and $\Upsilon=\{\gamma_1,\ldots ,\gamma_{l'},\alpha_{i_{l'+1}}',\ldots\alpha_{i_l}'\}$ every root $\alpha_j$ can be expressed as a linear combination of elements of $\Upsilon$ with half integer coefficients, $\alpha_j=\sum_{k=1}^{l'}c_j^k\gamma_k+\sum_{k=l'+1}^{l}c_j^k\alpha_{i_k}'$, and system (\ref{gsys}) takes the form
\begin{equation}\label{gsys1}
\sum_{i=1}^{l'}~x_i\gamma_i = \sum_{j=1}^{l}\frac{p_j}{d_j}\left(\sum_{k=1}^{l'}c_j^k\gamma_k+\sum_{k=l'+1}^{l}c_j^k\alpha_{i_k}'\right).
\end{equation}

Equations (\ref{gsys1}) are equivalent to
\begin{equation}\label{sol1}
x_i=\sum_{j=1}^{l}\frac{p_j}{d_j}c_j^i,~i=1,\ldots,l',
\end{equation}
and we also have compatibility conditions $\sum_{j=1}^{l}c_j^i\frac{p_j}{d_j}=0$, $i=l'+1,\ldots, l$. Since the numbers $d_j$ are equal to $1$ or $2$ and the coefficients $c_j^i$ are half integer, nontrivial solutions (\ref{sol1}), $-1< x_i<1$ may only have denominators equal to $2$ or $4$, and $m$ is not divisible by $2$ and $4$.

Thus to complete the proof we have to construct the basis $\Upsilon$.
First observe that the conjugacy class of every element $s$ corresponds to the sum of a number of blocks of type $A_{n}$, $C_n$, $B_n$ or $D_{i+j}(a_{j-1})$ as in (\ref{san}), (\ref{sbn}), (\ref{scn}) or (\ref{sdn}) (see \cite{C}, Section 7). The root system $\Delta_0$ fixed by the cycles of $s$ corresponding to a block is of type $A_{n}$, $C_n$, $B_n$ or $D_{n}$, and the remaining blocks are contained in it. Therefore it suffices to find a basis $\Upsilon$ containing the set $\gamma_1,\ldots ,\gamma_{l'}$ and simple roots of $\Delta_0$ in case when the conjugacy class of $s$ corresponds to a diagram with a single nontrivial block different from blocks of type $A_0$ (which correspond to trivial cycles of $s$).

If the nontrivial block is of type $A_{l'}$ then one can choose $\Delta_+'$ in such a way that $\{\gamma_1,\ldots ,\gamma_{l'}\}=\{\alpha_1',\ldots\alpha_{l'}'\}$, so $\Upsilon=\{\alpha_1',\ldots\alpha_l'\}$.

If the nontrivial block is of type $C_{l'}$ then $\Delta$ is of type $C_l$, and one can choose $\Delta_+'$ in such a way that $\Upsilon=\{\alpha_1',\ldots,\alpha_l'\}$, $\{\gamma_1,\ldots ,\gamma_{l'}\}=\{\alpha_{l-l'+1}',\ldots,\alpha_{l}'\}$, the basis of simple roots of $\Delta_0$ is $\alpha_{1}',\ldots,\alpha_{l-l'-1}'$.

If the nontrivial block is of type $B_{l'}$ then $\Delta$ is of type $B_l$. One can choose $\Delta_+'$ in such a way that $\Upsilon=\{\alpha_1',\ldots,\alpha_l'\}$, $\{\gamma_1,\ldots ,\gamma_{l'}\}=\{\alpha_{l-l'+1}',\ldots,\alpha_{l}'\}$, the basis of simple roots of $\Delta_0$ is $\alpha_{1}',\ldots,\alpha_{l-l'-1}'$.

If the nontrivial block is of type $D_{i+j}(a_{j-1})$ and $\Delta$ is of type $B_l$
then one can choose $\Delta_+'$ in such a way that $\{\gamma_1,\ldots ,\gamma_{i+j-1}\}=\{\alpha_1',\ldots\alpha_{i+j-1}'\}$, $\gamma_{i+j}=\gamma_i+2\Sigma_{k=i+1}^{l}\alpha_k'$, the basis of simple roots of $\Delta_0$ is $\alpha_{i+j+1}',\ldots\alpha_{l}'$. So $\Upsilon=\{\gamma_1,\ldots\gamma_{i+j}, \alpha_{i+j+1}',\ldots\alpha_{l}'\}$, and $\alpha_{i+j}'=\frac{\gamma_{i+j}}{2}-\frac{\gamma_{i}}{2}-\Sigma_{k=i+1}^{i+j-1}\gamma_k-\Sigma_{k=i+j+1}^{l}\alpha_k'$.

If the nontrivial block is of type $D_{i+j}(a_{j-1})$ and $\Delta$ is of type $D_l$
then one can choose $\Delta_+'$ in such a way that $\{\gamma_1,\ldots ,\gamma_{i+j-1}\}=\{\alpha_1',\ldots\alpha_{i+j-1}'\}$, $\gamma_{i+j}=\gamma_i+2\Sigma_{k=i+1}^{l-2}\alpha_k'+\alpha_{l-1}'+\alpha_{l}'$ $(i<l-1)$ or $\gamma_{i+j}=\alpha_{l}'$ $(i=l-1,j=1)$, the basis of simple roots of $\Delta_0$ is $\alpha_{i+j+1}',\ldots\alpha_{l}'$. So $\Upsilon=\{\gamma_1,\ldots\gamma_{i+j}, \alpha_{i+j+1}',\ldots\alpha_{l}'\}$, and $\alpha_{i+j}'=\frac{\gamma_{i+j}}{2}-\frac{\gamma_{i}}{2}-\Sigma_{k=i+1}^{i+j-1}\gamma_k-\Sigma_{k=i+j+1}^{l-2}\alpha_k'-
\frac{\alpha_{l-1}'}{2}-\frac{\alpha_{l}'}{2}$ $(i<l-1)$.

Thus in all cases every element of the set $\alpha_1',\ldots\alpha_l'$ can be expressed as a linear combination of elements from $\Upsilon$ with half integer coefficients. This completes the proof.

\end{proof}

\begin{proposition}
Let $\Delta$ be an irreducible root system, $\Delta_+$ the system of positive roots associated to a conjugacy class of a Weyl group element $s\in W$ in Theorem \ref{mainth}, $s=s_{\gamma_1}\ldots s_{\gamma_{l'}}$ representation (\ref{sinv}) for $s$, $\alpha_1,\ldots,\alpha_l$ the system of simple roots in $\Delta_+$.
Then if $\Delta$ is of exceptional type the lowest common multiple $d$ of the denominators of the numbers $\frac{1}{d_j}\left( {1+s \over 1-s }P_{{\h'}^*}\alpha_i,\alpha_j\right)$, where $i,j=1,\ldots,l$ and $i<j$ (or $i>j$) is given in the tables in Appendix 1; in the non simply--laced cases two numbers are given: the first one is for $i<j$ and the second one is for $i>j$.

If $\Delta$ is of classical type then the conjugacy class of $s$ corresponds to the sum of a number of blocks as in (\ref{san}), (\ref{scn}), (\ref{sbn}) or (\ref{sdn}). To each block of type $X$ we associate an integer $d_{ij}(X)$, $i,j=1,\ldots , l$ as follows:

if $\Delta$ is not of type $A_l,D_l$, an orbit with the smallest number of elements for the action of the group $<s>$ on $E$ corresponds to a block of type $A_n$ and $s$ does not fix any root from $\Delta$ then

for $\Delta=B_l$
\begin{equation}\label{abn}
d_{ij}(A_n)=\left\{
         \begin{array}{ll}
           2p+1 & \hbox{if $n=2p$ is even;} \\
           p+1 & \hbox{if $n=2p+1$, $n\neq 4p-1$ is odd;} \\
           p  &  \hbox{if $n=4p-1$ is odd and $i<j$;} \\
           2p  &  \hbox{if $n=4p-1$ is odd and $i>j$;}
         \end{array}
       \right.
\end{equation}

for $\Delta=C_l$
\begin{equation}\label{acn}
d_{ij}(A_n)=\left\{
         \begin{array}{ll}
           2p+1 & \hbox{if $n=2p$ is even;} \\
           p+1 & \hbox{if $n=2p+1$, $n\neq 4p-1$ is odd;} \\
           2p  &  \hbox{if $n=4p-1$ is odd and $i<j$;} \\
           p  &  \hbox{if $n=4p-1$ is odd and $i>j$;}
         \end{array}
       \right.
\end{equation}

for $\Delta=D_l$ if $A_{l-1}\subset D_l$ is the only nontrivial block of the conjugacy class of $s$ then
\begin{equation}\label{adn}
d_{ij}(A_{l-1})=\left\{
         \begin{array}{ll}
           2p+1 & \hbox{if $l=2p+1$ is odd;} \\
           p+1 & \hbox{if $l=2p+2$, $l\neq 4p$ is even;} \\
           p  &  \hbox{if $l=4p$ is even;}
         \end{array}
       \right.
\end{equation}

for $\Delta=A_l$ if $s$ is a representative in the Coxeter conjugacy class, i.e. the conjugacy class of $s$ corresponds to the block of type $A_l$, then
\begin{equation}\label{acox}
d_{ij}(A_l)=1;
\end{equation}

in all other cases

\begin{equation}\label{asimplylaced}
d_{ij}(A_k)=\left\{
         \begin{array}{ll}
           k+1 & \hbox{if $k$ is even;} \\
           \frac{k-1}{2}+1 & \hbox{if $k$ is odd;}
         \end{array}
       \right.
\end{equation}

in all cases
$$
d_{ij}(C_n)=d_{ij}(B_n)=d_{ij}(D_{v+w}(a_{w-1}))=1.
$$

Then a common multiple $d$ of the denominators of the numbers $\frac{1}{d_j}\left( {1+s \over 1-s }P_{{\h'}^*}\alpha_i,\alpha_j\right)$, where $i,j=1,\ldots,l$ and $i<j$ ($i>j$) is the lowest common multiple of the numbers $d_{ij}(X)$ for all blocks $X$ of the conjugacy class of $s$ and $i<j$ ($i>j$, respectively).

If $\alpha_1',\ldots \alpha_l'$ is another system of simple roots then a common multiple of the denominators of the numbers $\frac{1}{d_j}\left( {1+s \over 1-s }P_{{\h'}^*}\alpha_i,\alpha_j\right)$ will be also a common multiple of the denominators of the numbers $\frac{1}{d_j}\left( {1+s \over 1-s }P_{{\h'}^*}\alpha_i',\alpha_j'\right)$ and vice versa.

\end{proposition}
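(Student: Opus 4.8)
The plan is to interpret the operator $A=\frac{1+s}{1-s}P_{{\h'}^*}$ geometrically and then reduce the computation of its matrix elements to a single Carter block. First I would record that, since $s$ is orthogonal with respect to the form $(\,,\,)$, the Cayley transform $\frac{1+s}{1-s}$, defined on $\h'$ where $1-s$ is invertible, is skew--symmetric: a direct manipulation using $s^{T}=s^{-1}$ gives $\left(\frac{1+s}{1-s}\right)^{T}=-\frac{1+s}{1-s}$. Hence $A$ is skew--symmetric, which explains why the entries for $i<j$ and $i>j$ can differ only through the factors $d_i,d_j$ and why they coincide in the simply--laced case. Next I would use the spectral decomposition (\ref{hdec}): $A$ preserves each summand $\h_i$, vanishes on $\h_0$ and on every $(-1)$--line since there $\frac{1+s}{1-s}=0$, and on a rotation plane on which $s$ acts through an angle $\theta$, identifying the plane with $\mathbb{C}$ shows that $\frac{1+s}{1-s}$ acts as multiplication by $i\cot(\theta/2)$, that is, as $\cot(\theta/2)$ times the complex structure $J$. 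Thus $A$ is completely determined by the numbers $\cot(\theta/2)$ attached to the rotation planes of the various blocks.

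The second step is the reduction to one block. By the classification (\ref{san})--(\ref{sdn}) the class of $s$ is a sum of blocks, each acting on a disjoint set of coordinates $\varepsilon_a$ and hence on mutually orthogonal $s$--invariant subspaces. A simple root $\alpha_i$ is supported on at most two coordinates, so $\left(\frac{1+s}{1-s}P_{{\h'}^*}\alpha_i,\alpha_j\right)$ receives a contribution only from the rotation planes of the block or blocks meeting both $\alpha_i$ and $\alpha_j$, and contributions of different blocks add. Consequently the denominator of each matrix element is the least common multiple of the denominators produced by the individual blocks, and it suffices to compute $d_{ij}(X)$ for a configuration with a single nontrivial block $X$, all other cycles being trivial; the stated value of $d$ then follows by taking the least common multiple over all blocks.

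For the per--block computation I would combine the eigenvalue data of Lemma \ref{mainl} with the projection formula (\ref{pr}) and its analogues. For a positive cycle of length $k$, giving a block $A_{k-1}$, the rotation planes carry angles $\theta_r=2\pi r/k$, so the relevant scalars are $\cot(\pi r/k)$; writing $\alpha_i=\varepsilon_a-\varepsilon_b$ and applying (\ref{pr}), the matrix element becomes a finite trigonometric sum of the shape $\frac{2}{k}\sum_r\cot(\pi r/k)\sin(\cdots)$, which evaluates to a rational number. A parity analysis of these sums shows the denominator is $k$ when $k$ is odd and $k/2$ when $k$ is even, matching $2p+1$ for $n=2p$ and $p+1$ for $n=2p+1$ in (\ref{abn})--(\ref{adn}); the further halving in the case $n=4p-1$ arises from the interaction of this parity with the factor $1/d_j$ in the non--simply--laced types $B_l,C_l$, the two possible values being interchanged between $B_l$ and $C_l$ by root--length duality, and from the fork geometry of the roots $\varepsilon_{l-1}\pm\varepsilon_l$ in type $D_l$. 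Blocks of type $C_n$, $B_n$ and $D_{v+w}(a_{w-1})$ are treated identically, the supporting coordinates and odd--multiple angles being such that each sum collapses to an integer, so that $d_{ij}=1$.

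The main obstacle will be the explicit evaluation of these trigonometric sums together with the accompanying bookkeeping: one must keep track of exactly which pairs of simple roots are supported on which rotation planes, of the parity of $k$ and the sign of the cycle, and of how the factor $1/d_j$ distinguishes the cases $i<j$ and $i>j$ in types $B$ and $C$. Once the denominators of the single--block matrix elements are pinned down in this way, the least--common--multiple assertion is immediate from the block--diagonality of $A$ established above. For the exceptional root systems the same operator $A$ is evaluated directly and its matrix elements are tabulated, so the values of $d$ there are read off from the tables in Appendix 1.
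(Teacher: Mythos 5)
Your overall strategy is viable but it diverges from the paper's at the computational core, and it leaves precisely the decisive steps unperformed. The paper does not evaluate any trigonometric sums: it invokes the algebraic identity $\left( {1+s \over 1-s }P_{{\h'}^*}\gamma_i , \gamma_j \right)=\varepsilon_{ij}(\gamma_i,\gamma_j)$ (Lemma 5.1 of \cite{S10}), so that the Cayley transform is known exactly on the basis $\gamma_1,\ldots,\gamma_{l'}$ of the Carter decomposition, and then expands the simple roots through the fundamental weights $\omega_t'$ of the $A_k$--subsystem; every matrix element becomes an explicit rational number such as $(-1)^t(\vartheta-\delta\frac{2t}{k+1})$, from which the denominators are read off. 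Your route via the spectral decomposition and the scalars $\cot(\theta/2)$ on the rotation planes is in principle equivalent, and your preliminary observations (skew--symmetry of the Cayley transform, vanishing on $\h_0$ and on the $(-1)$--lines, block--diagonality over the Carter blocks, additivity of contributions) are all correct. But the proposition lives or dies on the exact denominators, and at that point you only assert that ``a parity analysis of these sums shows'' the generic values and that the refined cases ``arise from'' the factor $1/d_j$ and the fork of $D_l$. The cases $n=4p-1$ with the asymmetric values $p$ versus $2p$, the swap of these two values between $B_l$ and $C_l$, and the hypothesis that the block meet the end of the diagram (``an orbit with the smallest number of elements corresponds to a block of type $A_n$ and $s$ fixes no root'') are exactly where the content of (\ref{abn})--(\ref{adn}) sits; in the paper they come out of a four--way case analysis on the position of $\alpha_i',\alpha_j'$ relative to the block, with the parameters $\delta\in\{1,2\}$ and $\vartheta\in\{0,1\}$ recording whether $\alpha_j'^\vee$ is $2\varepsilon_{p+k+1}$ or $\varepsilon_{p+k}+\varepsilon_{p+k+1}$. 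None of that is derived in your proposal.

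There is a second, more structural gap. The proposition concerns the simple roots $\alpha_1,\ldots,\alpha_l$ of the positive system $\Delta_+$ associated to $s$ in Theorem \ref{mainth}, which are in general \emph{not} the standard roots $\varepsilon_a-\varepsilon_b$, $\varepsilon_a$, $2\varepsilon_a$, $\varepsilon_{a}+\varepsilon_{b}$ supported on at most two coordinates; your per--block reduction tacitly assumes they are. The paper's first step is the observation that if $\alpha_i=\sum_k c_i^k\alpha_k'$ with integer $c_i^k$ (and likewise for coroots), then a common multiple of the denominators computed in a convenient system $\Delta_+'$ (one in which $s$ is elliptic in a parabolic subgroup, so the block sits on consecutive standard simple roots) is automatically a common multiple for the system $\Delta_+$ of the theorem. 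Without this transfer your computation, even if completed, would prove the statement for the wrong basis. So: the skeleton is a legitimate alternative to the paper's, but you need to (i) add the integral change--of--basis reduction, and (ii) actually carry out the endpoint case analysis that produces the $4p-1$ exceptions and the $i<j$ versus $i>j$ asymmetry, whether by your cotangent sums or by the paper's identity (\ref{matrel}).
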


\begin{proof}

First observe that
if $\Delta'_+$ is another system of positive roots with the simple roots $\alpha_1',\ldots ,\alpha_l'$ then $\alpha_i=\sum_{k=1}^lc_i^k\alpha_k'$, $\alpha_j^\vee=\sum_{k=1}^lb_i^k\alpha_k'^\vee$, where $c_i^k,b_i^k$ are integer coefficients. Hence
$$\frac{1}{d_j}\left( {1+s \over 1-s }P_{{\h'}^*}\alpha_i,\alpha_j\right)=\left( {1+s \over 1-s }P_{{\h'}^*}\alpha_i,\alpha_j^\vee\right)=\sum_{k,p=1}^lc_i^kb_j^p\left( {1+s \over 1-s }P_{{\h'}^*}\alpha_k',\alpha_p'^\vee\right)=\sum_{k,p=1}^lc_i^kb_j^p\frac{1}{d_p}\left( {1+s \over 1-s }P_{{\h'}^*}\alpha_k',\alpha_p'\right),$$
and a common multiple of the denominators of the numbers $\frac{1}{d_j}\left( {1+s \over 1-s }P_{{\h'}^*}\alpha_i',\alpha_j'\right)$ will be also a common multiple of the denominators of the numbers $\frac{1}{d_j}\left( {1+s \over 1-s }P_{{\h'}^*}\alpha_i,\alpha_j\right)$ and vice versa.

In case of classical irreducible root systems we shall compute a common multiple $d$ of the denominators of the numbers $\frac{1}{d_j}\left( {1+s \over 1-s }P_{{\h'}^*}\alpha_i',\alpha_j'\right)$, where $\Delta'_+$ is chosen in such a way that $s$ is elliptic in a parabolic Weyl subgroup $W'\subset W$ generated by the simple reflections corresponding to roots from a subset of $\alpha_1',\ldots\alpha_l'$ (see the proof of Theorem \ref{mainth}).

As in the proof of the previous proposition it suffices to consider the case when the conjugacy class of $s$ corresponds to a diagram with a single nontrivial block.
We shall compute $d$ in case when this block is of type $A_{k}$, $k>1$. Other cases can be considered in a similar way.
Assume that the root system $\Delta$ is realized as in Section \ref{stt}, where $V$ is a real Euclidean $n$--dimensional vector space equipped with the standard scalar product, with an orthonormal basis $\varepsilon_1,\ldots ,\varepsilon_n$. In that case simple roots are

\subsection*{$\bf A_n$} $\alpha'_i=\varepsilon_i-\varepsilon_{i+1}$, $1\leq i\leq n$;

\subsection*{$\bf B_n$} $\alpha'_i=\varepsilon_i-\varepsilon_{i+1}$, $1\leq i< n$, $\alpha'_n=\varepsilon_n$;

\subsection*{$\bf C_n$} $\alpha'_i=\varepsilon_i-\varepsilon_{i+1}$, $1\leq i< n$, $\alpha'_n=2\varepsilon_n$;

\subsection*{$\bf D_n$} $\alpha'_i=\varepsilon_i-\varepsilon_{i+1}$, $1\leq i< n$, $\alpha'_n=\varepsilon_{n-1}+\varepsilon_n$;

Then $s$ is of the form
$$
s=s^1s^2,~s^1=s_{\alpha'_{p+1}}s_{\alpha'_{p+3}}\ldots,
~s^2=s_{\alpha'_{p+2}}s_{\alpha'_{p+4}}\ldots,
$$
where in the formulas for $s^{1,2}$ the products are taken over mutually orthogonal simple roots labeled by indexes of the same parity; the last simple root which appears in those products is $\alpha'_{p+k}=\varepsilon_{p+k}-\varepsilon_{p+k+1}$, so $\gamma_1,\ldots,\gamma_k=\alpha'_{p+1},\alpha'_{p+3},\ldots,\alpha'_{p+2},\alpha'_{p+4},\ldots$.

We have to compute the numbers $\left( {1+s \over 1-s }P_{{\h'}^*}\alpha_i',\alpha_j'^\vee\right)$. We consider the case when $i<j$. The case when $i>j$ can be obtained from it by observing that
\begin{equation}\label{ijji}
\left( {1+s \over 1-s }P_{{\h'}^*}\alpha_i',\alpha_j'^\vee\right)=-\left( {1+s \over 1-s }P_{{\h'}^*}\alpha_j',\alpha_i'^\vee\right)\frac{(\alpha_i',\alpha_i')}{(\alpha_j',\alpha_j')}.
\end{equation}

First observe that
by Lemma 6.2 in \cite{S9}
\begin{equation}\label{matrel}
\left( {1+s \over 1-s }P_{{\h'}^*}\gamma_i , \gamma_j \right)=
\varepsilon_{ij}(\gamma_i,\gamma_j),
\end{equation}
where
$$
\varepsilon_{ij} =\left\{ \begin{array}{ll}
-1 & i <j \\
0 & i=j \\
1 & i >j
\end{array}
\right. .
$$

Let $\omega_t'$ be the fundamental weights of the root subsystem $A_{k}\subset \Delta$ with respect to the basis of simple roots $\alpha'_i$, $i=p+1,\ldots,p+k$,
$$
\omega_t'=\varepsilon_{p+1}+\ldots+\varepsilon_{p+t}-\frac{t}{k+1}\sum_{j=1}^{k+1}\varepsilon_{p+j}.
$$
Then
$$
\left( {1+s \over 1-s }P_{{\h'}^*}\alpha_i',\alpha_j'^\vee\right)=\sum_{t,u=1}^{k}(\omega_t'^\vee,\alpha_i')\left( {1+s \over 1-s }P_{{\h'}^*}\alpha_{p+t}',\alpha_{p+u}'^\vee\right)(\omega_u',\alpha_j'^\vee).
$$
Since the scalar product in $V$ is normalized in such a way that $\alpha_{p+u}'^\vee=\alpha_{p+u}'$, $u=1,\ldots,k$ we obtain using (\ref{matrel})
\begin{eqnarray}\label{saa}
\left( {1+s \over 1-s }P_{{\h'}^*}\alpha_i',\alpha_j'^\vee\right)=\sum_{t,u=1}^{k}(\omega_t'^\vee,\alpha_i')\left( {1+s \over 1-s }P_{{\h'}^*}\alpha_{p+t}',\alpha_{p+u}'\right)(\omega_u',\alpha_j'^\vee)= \\
\qquad \qquad \qquad \qquad \qquad \qquad =\sum_{t=1}^{k}(-1)^t(\omega_t'^\vee,\alpha_i')
(\omega_{t-1}'+\omega_{t+1}',\alpha_j'^\vee), \nonumber
\end{eqnarray}
where we assume that $\omega_0'=\omega_{k+1}'=0$.

Now one has to consider several cases.

If one of the roots $\alpha_i',\alpha_j'$ is orthogonal to ${\h'}^*$ then the left hand side of the last equality is zero.

If $\alpha_i',\alpha_j'\in \{\gamma_1,\ldots,\gamma_k\}$ then by (\ref{matrel}) the left hand side of (\ref{saa}) is equal to $\pm 1$.

If $\alpha_i'=\alpha_{p+t}'$, $1<t<k$, $\alpha_j'=\alpha_{p+k+1}'$ then
\begin{equation}\label{dl}
\left( {1+s \over 1-s }P_{{\h'}^*}\alpha_i',\alpha_j'^\vee\right)=(-1)^t(\omega_{t-1}'+\omega_{t+1}',\alpha_{p+k+1}'^\vee)=
(-1)^t(\vartheta-\delta\frac{2t}{k+1}),
\end{equation}
where $\delta=2$ if $\alpha_j'^\vee=2\varepsilon_{p+k+1}$ or $\alpha_j'^\vee=\varepsilon_{p+k}+\varepsilon_{p+k+1}$, $\vartheta=0$ in the former case, and $\vartheta=1$ in the latter case. In all other cases $\vartheta=0$ and $\delta=1$. Note that $\delta\neq 1$ only in case when $\Delta$ is of type $B_n$ or $D_n$; for arbitrary $s$ this situation can only be realized if an orbit with the smallest number of elements for the action of the group $<s>$ on $E$ corresponds to a block of type $A_k$ and $s$ does not fix any root from $\Delta$. The denominator $d$ of the number in the right hand side of (\ref{dl}) is given by
\begin{equation}\label{d1}
d=\left\{
         \begin{array}{ll}
           2p+1 & \hbox{if $k=2p$ is even;} \\
           p+1 & \hbox{if $k=2p+1$, $k\neq 4p-1$ is odd;} \\
           \frac{2p}{\delta}  &  \hbox{if $k=4p-1$ is odd.}
         \end{array}
       \right.
\end{equation}

If $\alpha_i'=\alpha_{p+1}'$, $\alpha_j'=\alpha_{p+k+1}'$ then
$$
\left( {1+s \over 1-s }P_{{\h'}^*}\alpha_i',\alpha_j'^\vee\right)=-(\omega_{2}',\alpha_{p+k+1}'^\vee)=
\delta\frac{2}{k+1}-\vartheta,
$$
where $\delta=2$ if $\alpha_j'^\vee=2\varepsilon_{p+k+1}$ or $\alpha_j'^\vee=\varepsilon_{p+k}+\varepsilon_{p+k+1}$, $\vartheta=0$ in the former case, and $\vartheta=1$ in the latter case if $k=2$. In all other cases $\vartheta=0$ and $\delta=1$. We again obtain (\ref{d1}).

If $\alpha_i'=\alpha_{p+k}'$, $\alpha_j'=\alpha_{p+k+1}'$ then
$$
\left( {1+s \over 1-s }P_{{\h'}^*}\alpha_i',\alpha_j'^\vee\right)=(-1)^{k}(\omega_{k-1}',\alpha_{p+k+1}'^\vee)=
-(-1)^k\delta\frac{k-1}{k+1},
$$
and we obtain (\ref{d1}).

If $\alpha_i'=\alpha_{p}'$, $\alpha_j'=\alpha_{p+k+1}'$ then
\begin{eqnarray*}
\left( {1+s \over 1-s }P_{{\h'}^*}\alpha_i',\alpha_j'^\vee\right)=\sum_{t=1}^{k}(-1)^t(\omega_t',\alpha_p'^\vee)
(\omega_{t-1}'+\omega_{t+1}',\alpha_{p+k+1}'^\vee)= \qquad \qquad \qquad \qquad \qquad \qquad \qquad\\ =-\sum_{t=1}^{k-1}(-1)^t\left(-1+\frac{t}{k+1}\right)
\frac{2t\delta}{k+1}-
(-1)^k\left(-1+\frac{k}{k+1}\right)\frac{k-1}{k+1}\delta+(-1)^{k-1}\vartheta\left(-1+\frac{k-1}{k+1}\right).
\end{eqnarray*}

Using the fact that
$$
\sum_{r=1}^n(-1)^{r+1}r^2=(-1)^{n+1}\frac{n(n+1)}{2}~{\rm and}~\sum_{r=1}^n(-1)^{r+1}r=
\left\{
\begin{array}{ll}
\frac{n+1}{2} & \hbox{ if $n$ is odd;} \\
-\frac{n}{2} & \hbox{if $n$ is even}
\end{array}
\right.
$$
we obtain
\begin{eqnarray*}
\left( {1+s \over 1-s }P_{{\h'}^*}\alpha_i',\alpha_j'^\vee\right)=\left\{
\begin{array}{ll}
-\frac{\delta}{k+1}+\vartheta\frac{2}{k+1} & \hbox{if $k$ is even;} \\
-\vartheta\frac{2}{k+1}  & \hbox{if $k$ is odd.}
\end{array}
\right.
\end{eqnarray*}

The denominator $d$ of the number in the right hand side of the last equality is given by
$$
d=\left\{
         \begin{array}{ll}
           2p+1 & \hbox{if $k=2p$ is even;} \\
           1 & \hbox{if $k=2p+1$, $n$ is odd and $\vartheta=0$;} \\
           p+1  &  \hbox{if $k=2p+1$ is odd and $\vartheta=1$.}
         \end{array}
       \right.
$$

Summarizing all cases considered above and adding the case $i>j$ (see (\ref{ijji})) we arrive at (\ref{abn}), (\ref{acn}), (\ref{adn}), (\ref{acox}) and (\ref{asimplylaced}).

Other cases can be treated in a similar way.

\end{proof}


\setcounter{equation}{0}
\setcounter{theorem}{0}

\section*{Appendix 1. Transversal slices for simple exceptional algebraic groups.}

In this appendix, for simple exceptional algebraic groups we present the data related to the transversal slices $\Sigma_s$ defined in Theorem \ref{mainth}.
Let $G$ be a connected simple algebraic group of an exceptional type over an algebraically closed filed of characteristic good for $G$, and $\mathcal{O}\in \widehat{\underline{\mathcal{N}}}(G)$. Let $H$ be a maximal torus of $G$, $W$ the Weyl group of the pair $(G,H)$, and $s\in W$ an element from the conjugacy class $\Psi^W(\mathcal{O})$. Let $\Delta$ be the root system of the pair $(G,H)$ and $\Delta_+$ the system of positive roots in $\Delta$ associated to $\Psi^W(\mathcal{O})$ and defined in Section \ref{trans} with the help of decomposition (\ref{hdec}), where the subspaces $\h_i$ are ordered in such a way that if $\h_i=\h_\lambda^k$, $\h_j=\h_\mu^l$ and $0\leq \lambda <\mu< 1$ then $i<j$. Let $\Sigma_s$ be the corresponding transversal slice defined in Proposition \ref{crosssect}. Then straightforward calculation shows that
$$
{\rm dim}~Z_{G_p}(n)={\rm dim}~\Sigma_s
$$
for any $n\in \mathcal{O}\in \underline{\mathcal{N}}(G_p)\subset \widehat{\underline{\mathcal{N}}}(G)$. The numbers ${\rm dim}~Z_{G_p}(n)$ can be found in \cite{Li}, Chapter 22 (note, however, that the notation in \cite{Li} for some classes is different from ours; we follow \cite{L3,Spal1}). The numbers ${\rm dim}~\Sigma_s$ are contained in the tables below. These two numbers coincide in all cases. The tables below contain also the following information for each $\mathcal{O}\in \widehat{\underline{\mathcal{N}}}(G)$:
\\
-- The Weyl group conjugacy class $\Psi^W(\mathcal{O})$ which can be found in \cite{L3};
\\
-- The two involutions $s^1$ and $s^2$ in the decomposition $s=s^1s^2\in \Psi^W(\mathcal{O})$; they are represented by sets of natural numbers which are the numbers of roots appearing in decompositions $s^1=s_{\gamma_1}\ldots s_{\gamma_n}$, $s^2=s_{\gamma_{n+1}}\ldots s_{\gamma_{l'}}$, where the system of positive roots $\Delta_+$ is chosen as in Theorem \ref{mainth}, and the numeration of positive roots is given in Appendix 2;
\\
-- The dimension of the fixed point space $\h_0$ for the action of $s$ on $\h$;
\\
-- The number $|\overline{\Delta}_0|$ of roots fixed by $s$;
\\
-- The type of the root system $\overline{\Delta}_0$ fixed by $s$;
\\
-- The Dynkin diagram $\Gamma_0$ of $\overline{\Delta}_0$, where the numbers at the vertices of $\Gamma_0$ are the numbers of simple roots in $\Delta_+$ which appear in $\Gamma_0$; the numeration of simple roots is given in Appendix 2;
\\
-- The length $\underline{l}(s)$ of $s$ with respect to the system of simple roots in $\Delta_+$;
\\
-- ${\rm dim}~\Sigma_s={\rm dim}~\h_0+|\overline{\Delta}_0|+\underline{l}(s)$;
\\
-- The lowest common multiple $d$ of the denominators of the numbers $\frac{1}{d_j}\left( {1+s \over 1-s }P_{{\h'}^*}\alpha_i,\alpha_j\right)$, where $i,j=1,\ldots,l$ and $i<j$ (or $i>j$); in the non simply--laced cases two numbers are given: the first one is for $i<j$ and the second one is for $i>j$;
\\
-- The number $q$; if an odd positive integer $m$ is not divisible by $q$ then for any integers $p_j\in \mathbb{Z}$, $j=1,\ldots ,l$ the system of equations $Y_j(\sum_{i=1}^{l'}m_i\gamma_i)= mp_j$, $j=1,\ldots ,l$ for unknowns $m_i\in \{-m+1,\ldots ,m-1\}$, $i=1,\ldots ,l'$ has no nontrivial solutions; here $Y_j$ are defined by $Y_j(\alpha_i)=d_i\delta_{ij}$.

The algorithm for computing all the data above is as follows. MAGMA software was used to realize the algorithm (see \cite{magma}).

\begin{enumerate}

\item
The input data are two sets of mutually orthogonal roots $\gamma_1,\ldots,\gamma_n$ and $\gamma_{n+1},\ldots,\gamma_{l'}$ which appear in decomposition (\ref{sinv}). These sets can be found in \cite{C} for a representative $s$ in each Weyl group conjugacy class. MAGMA associates to each positive root its position in the list of positive roots (see Appendix 2), and the roots $\gamma_1,\ldots,\gamma_n$ and $\gamma_{n+1},\ldots,\gamma_{l'}$ are given in terms of their positions with respect to a system of positive roots $\Delta_+^0$.

\item
The eigenvalues of $s$ are computed in the cyclotomic field $\mathbb{Q}(\epsilon)$, where $\epsilon^h=1$, and $h$ is the order of $s$, $s^h=1$.

\item
The matrix $M$ has coefficients in the field $\mathbb{Q}$, $\mathbb{Q}(\sqrt{2})$ or $\mathbb{Q}(\sqrt{3})$.
The eigenvalues of the corresponding matrix $I-M$ are computed in the splitting field $\mathbb{F}$ of its characteristic polynomial. Note that MAGMA does computations in cyclotomic fields and in algebraic number fields with infinite precision.

\item The different eigenvalues $0<\lambda_i<1$ of $I-M$ are ordered in the following way
$$
1>\lambda_K >\lambda_{K-1}>\ldots>\lambda_1>0,
$$
and the corresponding eigenvalues of $s$ are ordered in the same way.

\item
A cycle over $\lambda_j$ is run starting from $\lambda_K$. For $\lambda_j$ an orthonormal basis of the corresponding eigenspace of $I-M$ is constructed in the space $\mathbb{F}^{l'}$.

\item
A cycle is run over all elements of the basis constructed at the previous step. For each element $u$ of that basis the corresponding elements $a_u$ and $b_u$ are defined in the $\mathbb{F}$--form $\h_\mathbb{F}^*$ of the Cartan subalgebra. $\h_\mathbb{F}^*$ is spanned by simple roots over $\mathbb{F}$.

\item
Assuming that the set $\bigcup_{k\leq i}\overline{\Delta}_k$  had been defined at the previous step,  $\bigcup_{k\leq i-1}\overline{\Delta}_{k}$ is defined as the subset of $\bigcup_{k\leq i}\overline{\Delta}_k$ which consists of roots orthogonal in $\h_\mathbb{F}^*$ to $a_u$ and $b_u$ spanning $\h_i$, and $\overline{\Delta}_i$ is defined as the subset of $\bigcup_{k\leq i}\overline{\Delta}_k$ which consists of roots not orthogonal to $a_u$ and $b_u$ in $\h_\mathbb{F}^*$.

\item
If necessarily the signs of some of the roots in the set $\{\gamma_1,\ldots,\gamma_{l'}\}\bigcap\overline{\Delta}_i$ are changed so that the orthogonal projections of the roots from the sets $\{\gamma_1,\ldots,\gamma_{n}\}\bigcap\overline{\Delta}_i$ and $\{\gamma_{n+1},\ldots,\gamma_{l'}\}\bigcap\overline{\Delta}_i$ onto the plane $\h_{i}$ spanned by $a_u$ and $b_u$ have the same directions. This does not change $s$.

\item
If the angle between the rays given by the orthogonal projections of the roots from the sets $\{\gamma_1,\ldots,\gamma_{n}\}\bigcap\overline{\Delta}_i$ and $\{\gamma_{n+1},\ldots,\gamma_{l'}\}\bigcap\overline{\Delta}_i$ onto the plane $\h_{i}$ is less than $\frac{\pi}{2}$ then the signs of roots from the set $\{\gamma_{n+1},\ldots,\gamma_{l'}\}\bigcap\overline{\Delta}_i$ are changed. It does not change $s$. This ensures that the angle between the rays $v_1$ and $v_2$ given by the orthogonal projections of the roots from the sets $\{\gamma_1,\ldots,\gamma_{n}\}\bigcap\overline{\Delta}_i$ and $\{\gamma_{n+1},\ldots,\gamma_{l'}\}\bigcap\overline{\Delta}_i$ onto the plane $\h_{i}$ is greater than or equal to $\frac{\pi}{2}$ (see Fig. 1).

$$
\xy/r10pc/: ="A",-(1,0)="B", "A",+(1,0)="C","A",-(0,1)="D","A",+(0.1,0.57)*{x},"A", {\ar+(0,+0.5)},"A", {\ar@{-}+(0,+1)}, "A";"B"**@{-},"A";"C"**@{-},"A";"D"**@{-},"A", {\ar+(0.6,0.13)},"A",+(0.65,0.18)*{v^2},"A",+(0.87,0.18)*{\overline{\Delta}_{i}^{2}},"A", {\ar@{-}+(0.9,0.41)}, "A",+(0.32,0)="D", +(-0.038,0.134)="E","D";"E" **\crv{(1.33,0.07)},"A",+(0.45,0.15)*{\psi},"A",+(0.49,0.05)*{\psi},"A", {\ar+(-0.6,0.23)},"A",+(-0.65,0.28)*{v^1},"A",+(-0.87,0.28)*{\overline{\Delta}_{i}^{1}},"A", {\ar@{-}+(-0.85,0.72)}, "A",+(-0.32,0)="F", +(0.066,0.21)="G","F";"G" **\crv{(0.67,0.07)},"A",+(-0.45,0.27)*{\varphi},"A",+(-0.49,0.08)*{\varphi}
\endxy
$$
\begin{center}
 Fig.1
\end{center}

\item
A generic element $x$ in the real form of $\h_{i}$ is defined as a linear combination of $a_u$ and $b_u$ with coefficients transcendental to all elements of $\mathbb{F}$ (the coefficients contain numbers $e$ and $\pi$).  $x$ is rotated by powers $s^c$, $c=0,\ldots,f-1$, where $f$ is the order of $s$ in $\h_{i}$, until the angles formed by $x$, $v_1$ and $x$, $v_2$ become acute (see Fig. 1). The order $f$ is found as the order of the eigenvalue of $s$ in $\mathbb{Q}(\epsilon)$ which corresponds to $\lambda_j$.

\item
Positive roots $\alpha\in (\overline{\Delta}_{i})_+$ in $\overline{\Delta}_i$ are defined by the condition $(\alpha,x)>0$. Since $x$ is defined as a linear combination of $a_u$ and $b_u$ with coefficients transcendental to all elements of $\mathbb{F}$ the set $(\overline{\Delta}_{i})_+$ is defined with infinite precision.

\item
The cycle over the elements $u$ of the basis is terminated.

\item
The cycle over the elements $\lambda_j$ is terminated.

\item
An orthonormal basis of the eigenspace corresponding to the eigenvalue $-1$ of $s$ is constructed in the $\mathbb{G}$--form $\h_\mathbb{G}^*$ of $\h^*$, where $\mathbb{G}=\mathbb{Q}$ in the simply--laced cases and $\mathbb{G}=\mathbb{Q}(\sqrt{2})$ or $\mathbb{G}=\mathbb{Q}(\sqrt{3})$ in the non simply--laced cases. The basis is chosen in such a way that the one--dimensional subspaces spanned by the basic vectors are also invariant with respect to the involutions $s^1$ and $s^2$.

\item
A cycle over the elements $v$ of the basis is run.

\item
Assuming that the set and $\bigcup_{k\leq i}\overline{\Delta}_k$  had been defined at the previous step,  $\bigcup_{k\leq i-1}\overline{\Delta}_{k}$ is defined as the subset of $\bigcup_{k\leq i}\overline{\Delta}_k$ which consists of roots orthogonal to $v\in \h_i$ in $\h_\mathbb{G}^*$, and $\overline{\Delta}_i$ is defined as the subset of $\bigcup_{k\leq i}\overline{\Delta}_k$ which consists of roots not orthogonal to $v$ in $\h_\mathbb{G}^*$.

\item
If necessarily the signs of some of the roots in the set $\{\gamma_1,\ldots,\gamma_{l'}\}\bigcap\overline{\Delta}_i$
are changed so that the orthogonal projections of the roots from the set $\{\gamma_1,\ldots,\gamma_{l'}\}\bigcap\overline{\Delta}_i$ onto the line $\h_{i}$ spanned by $v$ have the same directions as $v$. This does not change $s$.

\item
Positive roots $\alpha\in (\overline{\Delta}_{i})_+$ in $\overline{\Delta}_i$ are defined by the condition $(\alpha,v)>0$.

\item
The cycle over the elements $v$ of the basis is terminated.

\item
After running the previous steps we obtain the set of roots $\overline{\Delta}_0$ fixed by $s$ and the set of positive roots
$$
\Delta_+=\bigcup_{i=0}^{I}(\overline{\Delta}_{i})_+,
$$
where
$(\overline{\Delta}_{0})_+=\overline{\Delta}_0\bigcap\Delta_+^0$.
The decomposition $s=s^1s^2$ becomes reduced since the roots from the sets $\Delta_{s^{1,2}}\bigcap\overline{\Delta}_{i}$ belong to the sectors labeled by $\overline{\Delta}_{i}^{1,2}$ at Fig. 1, and these sectors are disjoint.
By construction the roots $\gamma_1,\ldots ,\gamma_{l'}$ are positive with respect to $\Delta_+$.

\item
Simple roots in $\Delta_+$ are found using the property that $\alpha$ is simple iff for any $\beta\neq \alpha$, $\beta\in \Delta_+$ we have $s_\alpha(\beta)\in \Delta_+$.

\item
Denote by $\alpha_1^0,\ldots,\alpha_l^0$ the simple roots of $\Delta_+^0$ numbered as in Appendix 2 and by $\alpha_1,\ldots,\alpha_l$ the simple roots of $\Delta_+$. A permutation $\pi$ of the set $1,\ldots,l$ is found such that if $w\in W$ and $w(\Delta_+)=\Delta_+^0$ then $w(\alpha_i)=\alpha_{\pi(i)}^0$ for all $i$. This is done by comparing the Cartan matrices with respect to the bases $\alpha_1^0,\ldots,\alpha_l^0$ and $\alpha_1,\ldots,\alpha_l$.
Then the basis $\alpha_1,\ldots,\alpha_l$ is reordered in such a way that $\pi=id$.

\item
The coordinates of the roots $\gamma_1,\ldots ,\gamma_{l'}$ are computed with respect to the basis $\alpha_1,\ldots,\alpha_l$ and the root positions of the roots $\gamma_1,\ldots ,\gamma_{l'}$ are defined according to the MAGMA numeration of roots.

\item
Simple roots in $\overline{\Delta}_0$ are found as the intersection $\overline{\Delta}_0\bigcap\{\alpha_1,\ldots,\alpha_l\}$.

\item
The type of the root system $\overline{\Delta}_0$ and its Coxeter graph are obtained.

\item
The basis $\gamma_1^*,\ldots ,\gamma_{l'}^*$ of the rational form  $\h_\mathbb{Q}'^*$ of $\h'^*$, dual to $\gamma_1,\ldots ,\gamma_{l'}$, is obtained.

\item
The matrix
$$
p_{ij}=\frac{1}{d_j}\left( {1+s \over 1-s }P_{{\h'}^*}\alpha_i,\alpha_j\right)=\sum_{p,q=1}^{l'}\frac{1}{d_j}\left( {1+s \over 1-s }P_{{\h'}^*}\gamma_p , \gamma_q \right)(\alpha_i,\gamma_p^*)(\alpha_j,\gamma_q^*)=\sum_{p,q=1}^{l'}\frac{1}{d_j}\varepsilon_{pq}(\gamma_p , \gamma_q )(\alpha_i,\gamma_p^*)(\alpha_j,\gamma_q^*)
$$
is computed.

\item
The lowest common multiple of the denominators of the numbers $p_{ij}$, $i<j$ ($i>j$) is computed.

\item
The matrix of the system $Y_j(\sum_{i=1}^{l'}x_i\gamma_i)= p_j$, $j=1,\ldots ,l$ is found and the system is reduced to the standard echelon form using Gaussian reduction. All computations are done over $\mathbb{Q}$.

\item
The reduced system is of the form
\begin{equation}\label{sS}
\left(
  \begin{array}{cccc}
    1 & 0 & \ldots & 0 \\
    0 & 1 & \ldots & 0 \\
    \vdots & \vdots & \vdots & \vdots \\
    0 & 0 & \ldots & 1 \\
    0 & 0 & \ldots & 0 \\
    \vdots & \vdots & \vdots & \vdots \\
    0 & 0 & \ldots & 0 \\
  \end{array}
\right)\left(
  \begin{array}{c}
    x_1 \\
    x_2 \\
    \vdots \\
    x_{l'} \\
  \end{array}
\right)
=
\left(
  \begin{array}{c}
    P_1 \\
    P_2 \\
    \vdots \\
    P_{l'} \\
    P_{l'+1} \\
    \vdots \\
    P_l \\
  \end{array}
\right),
\end{equation}
where $P_i$ are polynomials with rational coefficients in $p_j$, $j=1,\ldots ,l$. We are interested in the rational nontrivial solutions $-1< x_i<1$ with odd denominators (compare with the proof of Proposition \ref{cQ}).

Since $Y_j(\sum_{i=1}^{l'}x_i\gamma_i)= p_j$, the possible values of $p_j$ are integers, and $-b_j<p_j< b_j$ $b_j=Y_j(\sum_{i=1}^{l'}\gamma_i)$.

Now it is verified if system (\ref{sS}) has nontrivial rational solutions $-1< x_i<1$ with odd denominators by substituting $p_j=-b_j,\ldots,b_j$, $j=1,\ldots ,l$ into the right hand side of (\ref{sS}). If yes, the lowest common multiple $q$ of the denominators of the numbers $x_i$, $i=1,\ldots,l'$ is computed.

\item
The dimension of $\h_0$ is computed, ${\rm dim}~\h_0=l-l'$.

\item
The number of roots in $\overline{\Delta}_0$ is computed.

\item
The length of $s$ with respect to the system $\alpha_1,\ldots,\alpha_l$ is computed.

\item
The dimension of the slice ${\rm dim}~\Sigma_s=\underline{l}(s)+{\rm dim}~\h_0+|\overline{\Delta}_0|$ is computed.

Note that despite of the fact that a computer program is used to obtain the results summarized in the tables below, all computations are done with infinite precision since MAGMA does computations in algebraic number fields and in cyclotomic fields with infinite precision.

\end{enumerate}

\vskip 0.4cm

$\bf G_2.$

\vskip 0.1cm

\begin{tabular*}{1.03\textwidth}{|@{\extracolsep{\fill} }c|c|c|c|c|c|c|c|c|c|c|c|}
  \hline
 \rule[-.3cm]{0cm}{1cm}{}$\mathcal{O}$ & $\Psi^W(\mathcal{O})$ & $s^1$ & $s^2$ & ${\rm dim}~\h_0$ & $|\overline{\Delta}_0|$ & $\overline{\Delta}_0$ & $\Gamma_0$ & $l(s)$ & ${\rm dim}~\Sigma_s$ & d & q  \\ \hline \hline
$A_1$ & $A_1$ & 6 & -- & 1 & 2 & $A_1$ & $\xymatrix@R=.1cm@C=.1cm{
{\scriptstyle 1} \\
{\bullet}
}$
& 5 & 8 & 1,1 & 3 \\
  \hline
$(\tilde{A}_1)_3$ \rule[-.3cm]{0cm}{1cm}{} & $\tilde{A}_1$ & 4 & -- & 1 & 2 & $A_1$ &
$\xymatrix@R=.1cm@C=.1cm{
{\scriptstyle 2} \\
{\bullet}
}$
& 5 & 8 & 1,1 & -- \\
  \hline
$\tilde{A}_1$ \rule[-.3cm]{0cm}{1cm}{} & $A_1+\tilde{A}_1$ & 6 & 1 & 0 & 0 & -- & -- & 6 & 6 & 1,1 & 3 \\
  \hline
$G_2(a_1) \rule[-.2cm]{0cm}{0.6cm}{} $ & $A_2$ & 5 & 2 & 0 & 0 & -- & -- & 4 & 4 & 3,1 & 3 \\
  \hline
$G_2$ \rule[-.2cm]{0cm}{0.6cm}{} & $G_2$ & 1 & 2 & 0 & 0 & -- & -- & 2 & 2 & 1,1 & 3 \\
  \hline
\end{tabular*}

\newpage

$\bf F_4.$

\vskip 0.1cm

\begin{tabular*}{1.03\textwidth}{|@{\extracolsep{\fill} }c|c|c|c|c|c|c|c|c|c|c|c|}
  \hline
 \rule[-.3cm]{0cm}{1cm}{} $\mathcal{O}$ & $\Psi^W(\mathcal{O})$ & $s^1$ & $s^2$ & ${\rm dim}~\h_0$ & $|{\overline{\Delta}}_0|$ & $\overline{\Delta}_0$ & $\Gamma_0$  & $l(s)$ & ${\rm dim}~\Sigma_s$ & d & q  \\ \hline \hline
 $A_1$ & $A_1$ & 24 & -- & 3 & 18 & $C_3$ &

$\xymatrix@R=.1cm@C=.1cm{
{\scriptstyle 4}&&{\scriptstyle 3}&&{\scriptstyle 2}\\
{\bullet}\ar@{-}[rr]&&{\bullet}\ar@2{-}[rr]&&{\bullet}
}$
 & 15 & 36 & 1,1 & -- \\
  \hline
$(\tilde{A}_1)_2$ \rule[-.3cm]{0cm}{1cm}{} & $\tilde{A}_1$ & 21 & -- & 3 & 18 & $B_3$ &

$\xymatrix@R=.1cm@C=.1cm{
{\scriptstyle 1}&&{\scriptstyle 2}&&{\scriptstyle 3}\\
{\bullet}\ar@{-}[rr]&&{\bullet}\ar@2{-}[rr]&&{\bullet}
}$
& 15 & 36 & 1,1 & -- \\
\hline
 $\tilde{A}_1$ \rule[-.3cm]{0cm}{1cm}{} & $2A_1$ & 24 & 16 & 2 & 8 & $B_2$ &
$\xymatrix@R=.1cm@C=.1cm{
{\scriptstyle 2}&&{\scriptstyle 3}\\
{\bullet}\ar@2{-}[rr]&&{\bullet}
}$
& 20 & 30 & 1,1 & -- \\
  \hline
$A_1+\tilde{A}_1$ & $4A_1$ & $\begin{array}{c}
     16 \\
     24
   \end{array}
$ & $\begin{array}{c}
      2 \\
      9
    \end{array}$
 & 0 & 0 & -- & -- & 24 & 24 & 1,1 & -- \\
  \hline
$A_2$ & $A_2$ & 23 & 1 & 2 & 6 & $A_2$ &
$\xymatrix@R=.1cm@C=.1cm{
{\scriptstyle 3}&&{\scriptstyle 4}\\
{\bullet}\ar@{-}[rr]&&{\bullet}
}$
& 14 & 22 & 3,3 & -- \\
  \hline
$\tilde{A}_2$ \rule[-.3cm]{0cm}{1cm}{} & $\tilde{A}_2$ & 19 & 4 & 2 & 6 & $A_2$ &
$\xymatrix@R=.1cm@C=.1cm{
{\scriptstyle 1}&&{\scriptstyle 2}\\
{\bullet}\ar@{-}[rr]&&{\bullet}
}$
& 14 & 22 & 3,3 & -- \\
\hline
 $(B_2)_2$ & $B_2$ & 16 & 8 & 2 & 8 & $B_2$ &
$\xymatrix@R=.1cm@C=.1cm{
{\scriptstyle 2}&&{\scriptstyle 3}\\
{\bullet}\ar@2{-}[rr]&&{\bullet}
}$
& 10 & 20 & 1,2 & -- \\
\hline
$A_2+\tilde{A}_1$ & $A_2+\tilde{A}_1$ & $\begin{array}{c}
    23  \\
     7
   \end{array}
$  & 1 & 1 & 0 & -- & -- & 17 & 18 & 3,3 & -- \\
\hline
$(\tilde{A}_2+A_1)_2$ & $\tilde{A}_2+A_1$ & $\begin{array}{c}
       19 \\
       5
     \end{array}
$ & 4 & 1 & 0 & -- & -- & 17 & 18 & 3,3 & -- \\
\hline
$\tilde{A}_2+A_1$ & $A_2+\tilde{A}_2$ & $\begin{array}{c}
    23  \\
     3
   \end{array}
$ & $\begin{array}{c}
    1  \\
     4
   \end{array}
$ & 0 & 0 & -- & -- & 16 & 16 & 3,3 & 3 \\
\hline
$B_2$ & $A_3$ & $\begin{array}{c}
    1  \\
     14
   \end{array}
$ & 16 & 1 & 2 & $A_1$ &
$\xymatrix@R=.1cm@C=.1cm{
{\scriptstyle 3}\\
{\bullet}
}$
& 13 & 16 & 1,2 & -- \\
\hline
$(C_3(a_1))_2$ & $B_2+A_1$ & 16 & $\begin{array}{c}
    8  \\
     9
   \end{array}
$ & 1 & 2 & $A_1$ &
$\xymatrix@R=.1cm@C=.1cm{
{\scriptstyle 2}\\
{\bullet}
}$
& 13 & 16 & 1,2 & -- \\
\hline
$C_3(a_1)$ & $A_3+\tilde{A}_1$ & $\begin{array}{c}
    1  \\
     14
   \end{array}
$ & $\begin{array}{c}
    3  \\
     16
   \end{array}
$ & 0 & 0 & -- & -- & 14 & 14 & 1,2 & -- \\
\hline
$F_4(a_3)$ & $D_4(a_1)$ & $\begin{array}{c}
    16  \\
     2
   \end{array}
$ & $\begin{array}{c}
    5  \\
     11
   \end{array}
$ & 0 & 0 & -- & -- & 12 & 12 & 1,1 & -- \\
\hline
$B_3$ & $D_4$ & $\begin{array}{c}
    16  \\
     9 \\
     2
   \end{array}
$ & 1 & 0 & 0 & -- & -- & 10 & 10 & 1,1 & -- \\
\hline
$C_3$ & $C_3+A_1$ & $\begin{array}{c}
    3  \\
     14
   \end{array}
$ & $\begin{array}{c}
    4  \\
     1
   \end{array}
$ & 0 & 0 & -- & -- & 10 & 10 & 1,1 & -- \\
\hline
$F_4(a_2)$ & $F_4(a_1)$ & $\begin{array}{c}
    1  \\
     3
   \end{array}
$ & $\begin{array}{c}
    9  \\
     10
   \end{array}
$ & 0 & 0 & -- & -- & 8 & 8 & 1,1 & -- \\
\hline
$F_4(a_1)$ & $B_4$ & $\begin{array}{c}
    9  \\
     2
   \end{array}
$ & $\begin{array}{c}
    1  \\
     4
   \end{array}
$ & 0 & 0 & -- & -- & 6 & 6 & 1,1 & -- \\
\hline
$F_4$ & $F_4$ & $\begin{array}{c}
    1  \\
     3
   \end{array}
$ & $\begin{array}{c}
    2  \\
     4
   \end{array}
$ & 0 & 0 & -- & -- & 4 & 4 & 1,1 & -- \\
\hline
\end{tabular*}

\newpage

$\bf E_6.$

\vskip 0.1cm

\begin{tabular*}{1.03\textwidth}{|@{\extracolsep{\fill} }c|c|c|c|c|c|c|c|c|c|c|c|}
  \hline
\rule[-.3cm]{0cm}{1cm}{}  $\mathcal{O}$ & $\Psi^W(\mathcal{O})$ & $s^1$ & $s^2$ & ${\rm dim}~\h_0$ & $|{\overline{\Delta}}_0|$ & $\overline{\Delta}_0$ & $\Gamma_0$ & $l(s)$ & ${\rm dim}~\Sigma_s$ & d & q  \\ \hline \hline
$A_1$ & $A_1$ & 36 & -- & 5 & 30 & $A_5$ &
$\xymatrix@R=.1cm@C=.1cm{
{\scriptstyle 1}&&{\scriptstyle 3}&&{\scriptstyle 4}&&{\scriptstyle 5}&&{\scriptstyle 6}\\
{\bullet}\ar@{-}[rr]&&{\bullet}\ar@{-}[rr]&&{\bullet}\ar@{-}[rr]&& {\bullet}\ar@{-}[rr] &&{\bullet}
}$
& 21 & 56 & 1 & -- \\
\hline
$2A_1$ & $2A_1$ & 36 & 23 & 4 & 12 & $A_3$ &

$\xymatrix@R=.1cm@C=.1cm{
{\scriptstyle 3}&&{\scriptstyle 4}&&{\scriptstyle 5}\\
{\bullet}\ar@{-}[rr]&&{\bullet}\ar@{-}[rr]&&{\bullet}
}$
& 30 & 46 & 1 & -- \\
\hline
$3A_1$ & $4A_1$ & $\begin{array}{c}
    36 \\ 23
   \end{array}
$ & $\begin{array}{c}
    4 \\ 15
   \end{array}
$ & 2 & 0 & -- & -- & 36 & 38 & 1 & -- \\
\hline
$A_2$ & $A_2$ & 35 & 2 & 4 & 12 & $2A_2$ &
$\xymatrix@R=.1cm@C=.1cm{
{\scriptstyle 1}&&{\scriptstyle 3}\\
{\bullet}\ar@{-}[rr]&&{\bullet}
}$

$\xymatrix@R=.1cm@C=.1cm{
{\scriptstyle 5}&&{\scriptstyle 6}\\
{\bullet}\ar@{-}[rr]&&{\bullet}
}$
& 20 & 36 & 3 & -- \\
\hline
$A_2+A_1$ & $A_2+A_1$ & $\begin{array}{c}
    35 \\ 11
   \end{array}
$ & 2 & 3 & 6 & $A_2$ &
$\xymatrix@R=.1cm@C=.1cm{
{\scriptstyle 1}&&{\scriptstyle 3}\\
{\bullet}\ar@{-}[rr]&&{\bullet}
}$
& 23 & 32 & 3 & -- \\
\hline
$2A_2$ & $2A_2$ & $\begin{array}{c}
    35 \\ 6
   \end{array}
$ & $\begin{array}{c}
    2 \\ 5
   \end{array}
$ & 2 & 6 & $A_2$ &
$\xymatrix@R=.1cm@C=.1cm{
{\scriptstyle 1}&&{\scriptstyle 3}\\
{\bullet}\ar@{-}[rr]&&{\bullet}
}$
& 22 & 30 & 3 & -- \\
\hline
$A_2+2A_1$ & $A_2+2A_1$ & $\begin{array}{c}
    35 \\ 11
   \end{array}
$ & $\begin{array}{c}
    2 \\ 7
   \end{array}
$ & 2 & 0 & -- & -- & 26 & 28 & 3 & -- \\
\hline
$A_3$ & $A_3$ & $\begin{array}{c}
    24 \\ 2
   \end{array}
$ & 23 & 3 & 4 & $2A_1$ &
$\xymatrix@R=.1cm@C=.1cm{
{\scriptstyle 3}&&{\scriptstyle 5}\\
{\bullet}&&{\bullet}
}$
& 19 & 26 & 2 & -- \\
\hline
$2A_2+A_1$ & $3A_2$ & $\begin{array}{c}
    35 \\ 1 \\ 5
   \end{array}
$ & $\begin{array}{c}
    2 \\ 3 \\ 6
   \end{array}
$ & 0 & 0 & -- & -- & 24 & 24 & 3 & 3 \\
\hline
$A_3+A_1$ & $A_3+2A_1$ & $\begin{array}{c}
    2 \\ 5 \\ 24
   \end{array}
$ & $\begin{array}{c}
    23 \\ 3
   \end{array}
$ & 1 & 0 & -- & -- & 21 & 22 & 2 & -- \\
\hline
$D_4(a_1)$ & $D_4(a_1)$ & $\begin{array}{c}
    23 \\ 4
   \end{array}
$ & $\begin{array}{c}
    8 \\ 19
   \end{array}
$ & 2 & 0 & -- & -- & 18 & 20 & 2 & -- \\
\hline
$A_4$ & $A_4$ & $\begin{array}{c}
    24 \\ 21
   \end{array}
$ & $\begin{array}{c}
    2 \\ 1
   \end{array}
$ & 2 & 2 & $A_1$ &
$\xymatrix@R=.1cm@C=.1cm{
{\scriptstyle 5}\\
{\bullet}
}$
& 14 & 18 & 5 & -- \\
\hline
$D_4$ & $D_4$ & $\begin{array}{c}
    23 \\ 4 \\ 15
   \end{array}
$ & 2 & 2 & 0 & -- & -- & 16 & 18 & 1 & --  \\
\hline
$A_4+A_1$ & $A_4+A_1$ & $\begin{array}{c}
    24 \\ 21 \\ 5
   \end{array}
$ & $\begin{array}{c}
    2 \\ 1
   \end{array}
$ & 1 & 0 & -- & -- & 15 & 16 & 5 & -- \\
\hline
$A_5$ & $A_5+A_1$ & $\begin{array}{c}
    14 \\ 13 \\ 15
   \end{array}
$ & $\begin{array}{c}
    1 \\ 6 \\ 4
   \end{array}
$ & 0 & 0 & -- & -- & 14 & 14 & 1 & -- \\
\hline
$D_5(a_1)$ & $D_5(a_1)$ & $\begin{array}{c}
    2 \\ 7
   \end{array}
$ & $\begin{array}{c}
    15 \\ 12 \\ 16
   \end{array}
$ & 1 & 0 & -- & -- & 13 & 14 & 2 & -- \\
\hline
$A_5+A_1$ & $E_6(a_2)$ & $\begin{array}{c}
    1 \\ 2 \\ 6
   \end{array}
$ & $\begin{array}{c}
    9 \\ 10 \\ 19
   \end{array}
$ & 0 & 0 & -- & -- & 12 & 12 & 1 & -- \\
\hline
$D_5$ & $D_5$ & $\begin{array}{c}
    6 \\ 1 \\ 2
   \end{array}
$ & $\begin{array}{c}
    4 \\ 15
   \end{array}
$ & 1 & 0 & -- & -- & 9 & 10 & 2 & -- \\
\hline
$E_6(a_1)$ & $E_6(a_1)$ & $\begin{array}{c}
    6 \\ 8 \\ 9
   \end{array}
$ & $\begin{array}{c}
    1 \\ 2 \\ 5
   \end{array}
$ & 0 & 0 & -- & -- & 8 & 8 & 1 & -- \\
\hline
$E_6$ & $E_6$ & $\begin{array}{c}
    1 \\ 4 \\ 6
   \end{array}
$ & $\begin{array}{c}
    2 \\ 3 \\ 5
   \end{array}
$ & 0 & 0 & -- & -- & 6 & 6 & 1 & -- \\
\hline
\end{tabular*}

\newpage

$\bf E_7.$

\vskip 0.1cm

\begin{tabular*}{1.03\textwidth}{|@{\extracolsep{\fill} }c|c|c|c|c|c|c|c|c|c|c|c|}
  \hline
\rule[-.3cm]{0cm}{1cm}{}  $\mathcal{O}$ & $\Psi^W(\mathcal{O})$ & $s^1$ & $s^2$ & ${\rm dim}~\h_0$ & $|{\overline{\Delta}}_0|$ & $\overline{\Delta}_0$ & $\Gamma_0$ & $l(s)$ & ${\rm dim}~\Sigma_s$ & d & q  \\ \hline \hline
$A_1$ & $A_1$ & 63 & -- & 6 & 60 & $D_6$ &
$\xymatrix@R=.01cm@C=.1cm{
&&&&&&&{\scriptstyle 3}\\
&&&&&&&{\bullet}\\
{\scriptstyle 7}&&{\scriptstyle 6}&&{\scriptstyle 5}&&{\scriptstyle 4}&\\
{\bullet}\ar@{-}[rr]&&{\bullet}\ar@{-}[rr]&&{\bullet}\ar@{-}[rr]&& {\bullet}\ar@{-}[uur]\ar@{-}[ddr]& \\
&&&&&&&\\
&&&&&&&{\bullet}\\
&&&&&&&{\scriptstyle 2}
}$
& 33 & 99 & 1 & -- \\
\hline
$2A_1$ & $2A_1$ & 63 & 49 & 5 & 26 & $A_1+D_4$ &
$\xymatrix@R=.01cm@C=.1cm{
&&&&&{\scriptstyle 3}\\
&&&&&{\bullet}\\
{\scriptstyle 7}&&{\scriptstyle 2}&&{\scriptstyle 4}&\\
{\bullet}&&{\bullet}\ar@{-}[rr]&& {\bullet}\ar@{-}[uur]\ar@{-}[ddr]& \\
&&&&&\\
&&&&&{\bullet}\\
&&&&&{\scriptstyle 5}
}$ & 50 & 81 & 1 & --  \\
\hline
$(3A_1)''$ & $(3A_1)'$ & 63 & $\begin{array}{c}
    7 \\ 49
   \end{array}
$ & 4 & 24 & $D_4$ & $\xymatrix@R=.01cm@C=.1cm{
&&&{\scriptstyle 3}\\
&&&{\bullet}\\
{\scriptstyle 2}&&{\scriptstyle 4}&\\
{\bullet}\ar@{-}[rr]&& {\bullet}\ar@{-}[uur]\ar@{-}[ddr]& \\
&&&\\
&&&{\bullet}\\
&&&{\scriptstyle 5}
}$ & 51 & 79 & 1 & --  \\
\hline
$(3A_1)'$ & $(4A_1)''$ & $\begin{array}{c}
    63 \\ 49
   \end{array}
$ & $\begin{array}{c}
    2 \\ 28
   \end{array}
$ & 3 & 6 & $3A_1$ & $\xymatrix@R=.1cm@C=.1cm{
{\scriptstyle 3}&&{\scriptstyle 5}&&{\scriptstyle 7}\\
{\bullet}&&{\bullet}&&{\bullet}
}$ & 60 & 69 & 1 & --  \\
\hline
$A_2$ & $A_2$ & 62 & 1 & 5 & 30 & $A_5$ &
$\xymatrix@R=.1cm@C=.1cm{
{\scriptstyle 2}&&{\scriptstyle 4}&&{\scriptstyle 5}&&{\scriptstyle 6}&&{\scriptstyle 7}\\
{\bullet}\ar@{-}[rr]&&{\bullet}\ar@{-}[rr]&&{\bullet}\ar@{-}[rr]&& {\bullet}\ar@{-}[rr] &&{\bullet}
}$
 & 32 & 67 & 3 & --  \\
\hline
$4A_1$ & $7A_1$ & $\begin{array}{c}
    41 \\ 63 \\ 40 \\ 19
   \end{array}
$ & $\begin{array}{c}
    2 \\ 3 \\ 6
   \end{array}
$ & 0 & 0 & -- & -- & 63 & 63 & 1 & --  \\
\hline
$A_2+A_1$ & $A_2+A_1$ & $\begin{array}{c}
    30 \\62
   \end{array}
$ & 1 & 4 & 12 & $A_3$ & $\xymatrix@R=.1cm@C=.1cm{
{\scriptstyle 4}&&{\scriptstyle 5}&&{\scriptstyle 6}\\
{\bullet}\ar@{-}[rr]&& {\bullet}\ar@{-}[rr] &&{\bullet}
}$ & 41 & 57 & 3 & --  \\
\hline
$A_2+2A_1$ & $A_2+2A_1$ & $\begin{array}{c}
    30 \\62
   \end{array}
$ & $\begin{array}{c}
    1 \\ 18
   \end{array}
$ & 3 & 2 & $A_1$ & $\xymatrix@R=.1cm@C=.1cm{
{\scriptstyle 5}\\
{\bullet}
}$ & 46 & 51 & 3 & --  \\
\hline
$A_2+3A_1$ & $A_2+3A_1$ & $\begin{array}{c}
    30 \\62
   \end{array}
$ & $\begin{array}{c}
    1 \\ 5 \\ 18
   \end{array}
$ & 2 & 0 & -- & -- & 47 & 49 & 3 & --  \\
\hline
$2A_2$ & $2A_2$ & $\begin{array}{c}
    23 \\62
   \end{array}
$ & $\begin{array}{c}
    1 \\ 7
   \end{array}
$ & 3 & 6 & $A_2$ & $\xymatrix@R=.1cm@C=.1cm{
{\scriptstyle 4}&&{\scriptstyle 5}\\
{\bullet}\ar@{-}[rr] &&{\bullet}
}$ & 40 & 49 & 3 & --   \\
\hline
$A_3$ & $A_3$ & $\begin{array}{c}
    1 \\ 37
   \end{array}
$ & 49 & 4 & 14 & $A_1+A_3$ & $\xymatrix@R=.1cm@C=.1cm{
{\scriptstyle 7}&&{\scriptstyle 2}&&{\scriptstyle 4}&&{\scriptstyle 5}\\
{\bullet}&&{\bullet}\ar@{-}[rr]&& {\bullet}\ar@{-}[rr] &&{\bullet}
}$ & 31 & 49 & 2 & --  \\
\hline
$(A_3+A_1)''$ & $(A_3+A_1)'$ & $\begin{array}{c}
    1 \\ 37
   \end{array}
$ & $\begin{array}{c}
    7 \\ 49
   \end{array}
$ & 3 & 12 & $A_3$ & $\xymatrix@R=.1cm@C=.1cm{
{\scriptstyle 2}&&{\scriptstyle 4}&&{\scriptstyle 5}\\
{\bullet}\ar@{-}[rr]&& {\bullet}\ar@{-}[rr] &&{\bullet}
}$ & 32 & 47 & 2 & --  \\
\hline
$2A_2+A_1$ & $3A_2$ & $\begin{array}{c}
    5 \\ 25 \\62
   \end{array}
$ & $\begin{array}{c}
    1 \\ 2 \\ 6
   \end{array}
$ & 1 & 0 & -- & -- & 42 & 43 & 3 & 3  \\
\hline
$(A_3+A_1)'$ & $(A_3+2A_1)''$ & $\begin{array}{c}
    1 \\ 16 \\ 37
   \end{array}
$ & $\begin{array}{c}
    7 \\ 49
   \end{array}
$ & 2 & 2 & $A_1$ & $\xymatrix@R=.1cm@C=.1cm{
{\scriptstyle 4}\\
{\bullet}
}$ & 37 & 41 & 2 & --  \\
\hline
$A_3+2A_1$ & $A_3+3A_1$ & $\begin{array}{c}
    14 \\ 26 \\ 28
   \end{array}
$ & $\begin{array}{c}
    3 \\ 7 \\ 49
   \end{array}
$ & 1 & 0 & -- & -- & 38 & 39 & 2 & --  \\
\hline
\end{tabular*}

\begin{tabular*}{1.03\textwidth}{|@{\extracolsep{\fill} }c|c|c|c|c|c|c|c|c|c|c|c|}
  \hline
\rule[-.3cm]{0cm}{1cm}{}  $\mathcal{O}$ & $\Psi^W(\mathcal{O})$ & $s^1$ & $s^2$ & ${\rm dim}~\h_0$ & $|{\overline{\Delta}}_0|$ & $\overline{\Delta}_0$ & $\Gamma_0$ & $l(s)$ & ${\rm dim}~\Sigma_s$ & d & q  \\ \hline \hline
$D_4(a_1)$ & $D_4(a_1)$ & $\begin{array}{c}
    3 \\ 49
   \end{array}
$ & $\begin{array}{c}
    8 \\ 32
   \end{array}
$ & 3 & 6 & $3A_1$ & $\xymatrix@R=.1cm@C=.1cm{
{\scriptstyle 2} && {\scriptstyle 5} && {\scriptstyle 7}\\
{\bullet} && {\bullet} && {\bullet}
}$ & 30 & 39 & 2 & --  \\
\hline
$D_4(a_1)+A_1$ & $D_4(a_1)+A_1$ & $\begin{array}{c}
    3 \\ 7 \\ 49
   \end{array}
$ & $\begin{array}{c}
    8 \\ 32
   \end{array}
$ & 2 & 4 & $2A_1$ & $\xymatrix@R=.1cm@C=.1cm{
{\scriptstyle 2} && {\scriptstyle 5} \\
{\bullet} && {\bullet}
}$ & 31 & 37 & 2 & --  \\
\hline
$D_4$ & $D_4$ & $\begin{array}{c}
    3 \\ 28 \\ 49
   \end{array}
$ & 1 & 3 & 6 & $3A_1$ & $\xymatrix@R=.1cm@C=.1cm{
{\scriptstyle 2} && {\scriptstyle 5} && {\scriptstyle 7}\\
{\bullet} && {\bullet} && {\bullet}
}$ & 28 & 37 & 1 & --  \\
\hline
$(A_3+A_2)_2$ & $A_3+A_2$ & $\begin{array}{c}
    20\\21\\22
   \end{array}
$ & $\begin{array}{c}
    4\\49
   \end{array}
$ & 2 & 2 & $A_1$ & $\xymatrix@R=.1cm@C=.1cm{
{\scriptstyle 7}\\
{\bullet}
}$ & 33 & 37 & 6 & --  \\
\hline
$A_3+A_2$ & $D_4(a_1)+2A_1$ & $\begin{array}{c}
    3 \\ 7 \\ 49
   \end{array}
$ & $\begin{array}{c}
    8 \\ 2 \\ 32
   \end{array}
$ & 1 & 2 & $A_1$ & $\xymatrix@R=.1cm@C=.1cm{
{\scriptstyle 5} \\
{\bullet}
}$ & 32 & 35 & 2 & --  \\
\hline
$A_3+A_2+A_1$ & $2A_3+A_1$ & $\begin{array}{c}
    4 \\ 7 \\ 16 \\ 49
   \end{array}
$ & $\begin{array}{c}
    3 \\ 20 \\ 21
   \end{array}
$ & 0 & 0 & -- & -- & 33 & 33 & 2 & -- \\
\hline
$A_4$ & $A_4$ & $\begin{array}{c}
    37 \\ 45
   \end{array}
$ & $\begin{array}{c}
    1 \\ 6
   \end{array}
$ & 3 & 6 & $A_2$ & $\xymatrix@R=.1cm@C=.1cm{
{\scriptstyle 2}&&{\scriptstyle 4}\\
{\bullet}\ar@{-}[rr]&& {\bullet}
}$ & 24 & 33 & 5 & --  \\
\hline
$A_5''$ & $A_5'$ & $\begin{array}{c}
    1 \\ 7 \\ 37
   \end{array}
$ & $\begin{array}{c}
    23 \\ 24
   \end{array}
$ & 2 & 6 & $A_2$ & $\xymatrix@R=.1cm@C=.1cm{
{\scriptstyle 4}&&{\scriptstyle 5}\\
{\bullet}\ar@{-}[rr]&& {\bullet}
}$ & 23 & 31 & 3 & --  \\
\hline
$D_4+A_1$ & $D_4+3A_1$ & $\begin{array}{c}
    28\\49\\3
   \end{array}
$ & $\begin{array}{c}
   1\\7\\2\\5
   \end{array}
$ & 0 & 0 & -- & -- & 31 & 31 & 1 & --  \\
\hline
$A_4+A_1$ & $A_4+A_1$ & $\begin{array}{c}
    9 \\ 37 \\ 45
   \end{array}
$ & $\begin{array}{c}
    1 \\ 6
   \end{array}
$ & 2 & 0 & -- & -- & 27 & 29 & 5 & --  \\
\hline
$A_4+A_2$ & $A_4+A_2$ & $\begin{array}{c}
    4 \\ 37 \\ 45
   \end{array}
$ & $\begin{array}{c}
    1 \\ 2 \\ 6
   \end{array}
$ & 1 & 0 & -- & -- & 26 & 27 & 15 & --  \\
\hline
$D_5(a_1)$ & $D_5(a_1)$ & $\begin{array}{c}
    3 \\ 28 \\ 49
   \end{array}
$ & $\begin{array}{c}
    1 \\ 30
   \end{array}
$ & 2 & 2 & $A_1$ & $\xymatrix@R=.1cm@C=.1cm{
{\scriptstyle 5}\\
{\bullet}
}$ & 23 & 27 & 2 & --  \\
\hline
$(A_5+A_1)''$ & $A_5+A_2$ & $\begin{array}{c}
    1 \\ 5 \\ 7 \\ 37
   \end{array}
$ & $\begin{array}{c}
    4 \\ 23 \\ 24
   \end{array}
$ & 0 & 0 & -- & -- & 25 & 25 & 3 & 3  \\
\hline
$A_5'$ & $(A_5+A_1)''$ & $\begin{array}{c}
    4 \\ 16 \\ 37
   \end{array}
$ & $\begin{array}{c}
    1 \\ 12 \\ 13
   \end{array}
$ & 1 & 0 & -- & -- & 24 & 25 & 3 & --  \\
\hline
$D_5(a_1)+A_1$ & $D_5(a_1)+A_1$ & $\begin{array}{c}
    3 \\ 28 \\ 49
   \end{array}
$ & $\begin{array}{c}
    1 \\ 5 \\ 30
   \end{array}
$ & 1 & 0 & -- & -- & 24 & 25 & 2 & --  \\
\hline
$D_6(a_2)$ & $D_6(a_2)+A_1$ & $\begin{array}{c}
   2 \\ 7 \\ 8 \\ 32
   \end{array}
$ & $\begin{array}{c}
    5 \\ 23 \\ 24
   \end{array}
$ & 0 & 0 & -- & -- & 23 & 23 & 1 & --  \\
\hline
$(A_5+A_1)'$ & $E_6(a_2)$ & $\begin{array}{c}
   1 \\ 4 \\ 16
   \end{array}
$ & $\begin{array}{c}
    28 \\ 29 \\ 31
   \end{array}
$ & 1 & 0 & -- & -- & 22 & 23 & 3 & --  \\
\hline
\end{tabular*}

\newpage

\begin{tabular*}{1.03\textwidth}{|@{\extracolsep{\fill} }c|c|c|c|c|c|c|c|c|c|c|c|}
  \hline
 \rule[-.3cm]{0cm}{1cm}{} $\mathcal{O}$ & $\Psi^W(\mathcal{O})$ & $s^1$ & $s^2$ & ${\rm dim}~\h_0$ & $|{\overline{\Delta}}_0|$ & $\overline{\Delta}_0$ & $\Gamma_0$ & $l(s)$ & ${\rm dim}~\Sigma_s$ & d & q  \\ \hline \hline
$D_5$ & $D_5$ & $\begin{array}{c}
    3 \\ 28
   \end{array}
$ & $\begin{array}{c}
    1 \\ 6 \\ 19
   \end{array}
$ & 2 & 2 & $A_1$ & $\xymatrix@R=.1cm@C=.1cm{
{\scriptstyle 2}\\
{\bullet}
}$ & 17 & 21 & 2 & --  \\
\hline
$D_6(a_2)+A_1$ & $E_7(a_4)$ & $\begin{array}{c}
   1 \\ 2 \\ 12 \\ 13
   \end{array}
$ & $\begin{array}{c}
    15 \\ 17 \\ 32
   \end{array}
$ & 0 & 0 & -- & -- & 21 & 21 & 1 & --  \\
\hline
$D_5+A_1$ & $D_5+A_1$ & $\begin{array}{c}
   2 \\ 3 \\ 28
   \end{array}
$ & $\begin{array}{c}
    1 \\ 6 \\ 19
   \end{array}
$ & 1 & 0 & -- & -- & 18 & 19 & 2 & --  \\
\hline
$A_6$ & $A_6$ & $\begin{array}{c}
   11 \\ 19 \\ 26
   \end{array}
$ & $\begin{array}{c}
    6 \\ 9 \\ 10
   \end{array}
$ & 1 & 0 & -- & -- & 18 & 19 & 7 & --  \\
\hline
$D_6(a_1)$ & $D_6(a_1)$ & $\begin{array}{c}
    3 \\ 5 \\ 28
   \end{array}
$ & $\begin{array}{c}
    1 \\ 12 \\ 13
   \end{array}
$ & 1 & 2 & $A_1$ & $\xymatrix@R=.1cm@C=.1cm{
{\scriptstyle 2}\\
{\bullet}
}$ & 16 & 19 & 1 & --  \\
\hline
$D_6(a_1)+A_1$ & $A_7$ & $\begin{array}{c}
  4 \\ 16 \\ 15 \\ 17
   \end{array}
$ & $\begin{array}{c}
    1 \\ 12 \\ 13
   \end{array}
$ & 0 & 0 & -- & -- & 17 & 17 & 1 & --  \\
\hline
$D_6$ & $D_6+A_1$ & $\begin{array}{c}
   1 \\ 6 \\ 9
   \end{array}
$ & $\begin{array}{c}
    7 \\ 10 \\ 11 \\ 22
   \end{array}
$ & 0 & 0 & -- & -- & 15 & 15 & 1 & --  \\
\hline
$E_6(a_1)$ & $E_6(a_1)$ & $\begin{array}{c}
   8 \\ 19 \\ 22
   \end{array}
$ & $\begin{array}{c}
    1 \\ 4 \\ 6
   \end{array}
$ & 1 & 0 & -- & -- & 14 & 15 & 3 & --  \\
\hline
$E_6$ & $E_6$ & $\begin{array}{c}
   3  \\ 6  \\ 19
   \end{array}
$ & $\begin{array}{c}
    1 \\ 9 \\ 11
   \end{array}
$ & 1 & 0 & -- & -- & 12 & 13 & 3 & --  \\
\hline
$D_6+A_1$ & $E_7(a_3)$ & $\begin{array}{c}
   7\\10\\11\\22
   \end{array}
$ & $\begin{array}{c}
    1\\2\\6
   \end{array}
$ & 0 & 0 & -- & -- & 13 & 13 & 1 & --  \\
\hline
$E_7(a_2)$ & $E_7(a_2)$ & $\begin{array}{c}
   2\\3\\12\\13
   \end{array}
$ & $\begin{array}{c}
    1\\9\\11
   \end{array}
$ & 0 & 0 & -- & -- & 11 & 11 & 1 & --  \\
\hline
$E_7(a_1)$ & $E_7(a_1)$ & $\begin{array}{c}
   6\\9\\10
   \end{array}
$ & $\begin{array}{c}
    1\\2\\5\\7
   \end{array}
$ & 0 & 0 & -- & -- & 9 & 9 & 1 & --  \\
\hline
$E_7$ & $E_7$ & $\begin{array}{c}
   1\\4\\6
   \end{array}
$ & $\begin{array}{c}
    2\\3\\5\\7
   \end{array}
$ & 0 & 0 & -- & -- & 7 & 7 & 1 & --  \\
\hline
\end{tabular*}

\newpage

$\bf E_8.$

\vskip 0.1cm

\begin{tabular*}{1.03\textwidth}{|@{\extracolsep{\fill} }c|c|c|c|c|c|c|c|c|c|c|c|}
  \hline
\rule[-.3cm]{0cm}{1cm}{}  $\mathcal{O}$ & $\Psi^W(\mathcal{O})$ & $s^1$ & $s^2$ & ${\rm dim}~\h_0$ & $|{\overline{\Delta}}_0|$ & $\overline{\Delta}_0$ & $\Gamma_0$ & $l(s)$ & ${\rm dim}~\Sigma_s$ & d & q  \\ \hline \hline
$A_1$ & $A_1$ & 120 & -- & 7 & 126 & $E_7$ &
$\xymatrix@R=.1cm@C=.1cm{
{\scriptstyle 1}&&{\scriptstyle 3}&&{\scriptstyle 4}&&{\scriptstyle 5}&&{\scriptstyle 6}&&{\scriptstyle 7}\\
{\bullet}\ar@{-}[rr]&&{\bullet}\ar@{-}[rr]&&{\bullet}\ar@{-}[dd]\ar@{-}[rr]&& {\bullet}\ar@{-}[rr] &&{\bullet}\ar@{-}[rr]&&{\bullet}\\
&&&&&&&&&&\\
&&&&{\bullet}&&&&&&\\
&&&&{\scriptstyle 2}&&&&&&}$
& 57 & 190 & 1 & -- \\
\hline
$2A_1$ & $2A_1$ & 120 & 97 & 6 & 60 & $D_6$ &
$\xymatrix@R=.01cm@C=.1cm{
&&&&&&&{\scriptstyle 2}\\
&&&&&&&{\bullet}\\
{\scriptstyle 7}&&{\scriptstyle 6}&&{\scriptstyle 5}&&{\scriptstyle 4}&\\
{\bullet}\ar@{-}[rr]&&{\bullet}\ar@{-}[rr]&&{\bullet}\ar@{-}[rr]&& {\bullet}\ar@{-}[uur]\ar@{-}[ddr]& \\
&&&&&&&\\
&&&&&&&{\bullet}\\
&&&&&&&{\scriptstyle 3}
}$ & 90 & 156 & 1 & --  \\
\hline
$3A_1$ & $(4A_1)'$ & $\begin{array}{c}
    97 \\ 120
   \end{array}
$ & $\begin{array}{c}
    7 \\ 61
   \end{array}
$ & 4 & 24 & $D_4$ & $\xymatrix@R=.01cm@C=.1cm{
&&&{\scriptstyle 3}\\
&&&{\bullet}\\
{\scriptstyle 2}&&{\scriptstyle 4}&\\
{\bullet}\ar@{-}[rr]&& {\bullet}\ar@{-}[uur]\ar@{-}[ddr]& \\
&&&\\
&&&{\bullet}\\
&&&{\scriptstyle 5}
}$ & 108 & 136 & 1 & --  \\
\hline
$A_2$ & $A_2$ & 119 & 8 & 6 & 72 & $E_6$ &
$\xymatrix@R=.1cm@C=.1cm{
{\scriptstyle 1}&&{\scriptstyle 3}&&{\scriptstyle 4}&&{\scriptstyle 5}&&{\scriptstyle 6}\\
{\bullet}\ar@{-}[rr]&&{\bullet}\ar@{-}[rr]&&{\bullet}\ar@{-}[dd]\ar@{-}[rr]&& {\bullet}\ar@{-}[rr] &&{\bullet}\\
&&&&&&&&\\
&&&&{\bullet}&&&&\\
&&&&{\scriptstyle 2}&&&&}$
 & 56 & 134 & 3 & --  \\
\hline
$4A_1$ & $8A_1$ & $\begin{array}{c}
    74 \\ 118 \\ 104 \\ 32
   \end{array}
$ & $\begin{array}{c}
    2 \\ 3 \\ 5 \\ 8
   \end{array}
$ & 0 & 0 & -- & -- & 120 & 120 & 1 & --  \\
\hline
$A_2+A_1$ & $A_2+A_1$ & $\begin{array}{c}
    69 \\119
   \end{array}
$ & 8 & 5 & 30 & $A_5$ & $\xymatrix@R=.1cm@C=.1cm{
{\scriptstyle 1}&&{\scriptstyle 3}&&{\scriptstyle 4}&&{\scriptstyle 5}&&{\scriptstyle 6}\\
{\bullet}\ar@{-}[rr]&& {\bullet}\ar@{-}[rr] &&{\bullet}\ar@{-}[rr]&& {\bullet}\ar@{-}[rr] &&{\bullet}
}$ & 77 & 112 & 3 & --  \\
\hline
$A_2+2A_1$ & $A_2+2A_1$ & $\begin{array}{c}
    69 \\119
   \end{array}
$ & $\begin{array}{c}
    8 \\ 31
   \end{array}
$ & 4 & 12 & $A_3$ & $\xymatrix@R=.1cm@C=.1cm{
{\scriptstyle 3}&&{\scriptstyle 4}&&{\scriptstyle 5}\\
{\bullet}\ar@{-}[rr] &&{\bullet}\ar@{-}[rr] &&{\bullet}
}$ & 86 & 102 & 3 & --  \\
\hline
$A_3$ & $A_3$ & $\begin{array}{c}
    8 \\ 74
   \end{array}
$ & 97 & 5 & 40 & $D_5$ &
$\xymatrix@R=.01cm@C=.1cm{
&&&&&{\scriptstyle 2}\\
&&&&&{\bullet}\\
{\scriptstyle 6}&&{\scriptstyle 5}&&{\scriptstyle 4}&\\
{\bullet}\ar@{-}[rr]&&{\bullet}\ar@{-}[rr]&& {\bullet}\ar@{-}[uur]\ar@{-}[ddr]& \\
&&&&&\\
&&&&&{\bullet}\\
&&&&&{\scriptstyle 3}
}$
& 55 & 100 & 2 & --  \\
\hline
$A_2+3A_1$ & $A_2+4A_1$ & $\begin{array}{c}
    31 \\69 \\119
   \end{array}
$ & $\begin{array}{c}
    4 \\ 8 \\ 19
   \end{array}
$ & 2 & 0 & -- & -- & 92 & 94 & 3 & --  \\
\hline
$2A_2$ & $2A_2$ & $\begin{array}{c}
    63 \\119
   \end{array}
$ & $\begin{array}{c}
    2 \\ 8
   \end{array}
$ & 4 & 12 & $2A_2$ & $\xymatrix@R=.1cm@C=.1cm{
{\scriptstyle 1}&&{\scriptstyle 3}&&{\scriptstyle 5}&&{\scriptstyle 6}\\
{\bullet}\ar@{-}[rr] &&{\bullet}&&{\bullet}\ar@{-}[rr] &&{\bullet}
}$ & 76 & 92 & 3 & --   \\
\hline
$2A_2+A_1$ & $3A_2$ & $\begin{array}{c}
    6 \\ 63 \\119
   \end{array}
$ & $\begin{array}{c}
    2 \\ 5 \\ 8
   \end{array}
$ & 2 & 6 & $A_2$ & $\xymatrix@R=.1cm@C=.1cm{
{\scriptstyle 1}&&{\scriptstyle 3}\\
{\bullet}\ar@{-}[rr] &&{\bullet}
}$ & 78 & 86 & 3 & 3  \\
\hline
$A_3+A_1$ & $(A_3+2A_1)'$ & $\begin{array}{c}
    8 \\ 48 \\ 74
   \end{array}
$ & $\begin{array}{c}
    6 \\ 97
   \end{array}
$ & 3 & 12 & $A_3$ & $\xymatrix@R=.1cm@C=.1cm{
{\scriptstyle 2}&&{\scriptstyle 4}&&{\scriptstyle 3}\\
{\bullet}\ar@{-}[rr]&& {\bullet}\ar@{-}[rr] &&{\bullet}
}$ & 69 & 84 & 2 & --  \\
\hline
\end{tabular*}

\newpage

\begin{tabular*}{1.03\textwidth}{|@{\extracolsep{\fill} }c|c|c|c|c|c|c|c|c|c|c|c|}
  \hline
 \rule[-.3cm]{0cm}{1cm}{} $\mathcal{O}$ & $\Psi^W(\mathcal{O})$ & $s^1$ & $s^2$ & ${\rm dim}~\h_0$ & $|{\overline{\Delta}}_0|$ & $\overline{\Delta}_0$ & $\Gamma_0$ & $l(s)$ & ${\rm dim}~\Sigma_s$ & d & q  \\ \hline \hline
$D_4(a_1)$ & $D_4(a_1)$ & $\begin{array}{c}
    7 \\ 97
   \end{array}
$ & $\begin{array}{c}
    15 \\ 68
   \end{array}
$ & 4 & 24 & $D_4$ & $\xymatrix@R=.01cm@C=.1cm{
&&&{\scriptstyle 3}\\
&&&{\bullet}\\
{\scriptstyle 2}&&{\scriptstyle 4}&\\
{\bullet}\ar@{-}[rr]&& {\bullet}\ar@{-}[uur]\ar@{-}[ddr]& \\
&&&\\
&&&{\bullet}\\
&&&{\scriptstyle 5}
}$
& 54 & 82 & 2 & --  \\
\hline
$D_4$ & $D_4$ & $\begin{array}{c}
    7 \\ 61 \\ 97
   \end{array}
$ & 8 & 4 & 24 & $D_4$ & $\xymatrix@R=.01cm@C=.1cm{
&&&{\scriptstyle 3}\\
&&&{\bullet}\\
{\scriptstyle 2}&&{\scriptstyle 4}&\\
{\bullet}\ar@{-}[rr]&& {\bullet}\ar@{-}[uur]\ar@{-}[ddr]& \\
&&&\\
&&&{\bullet}\\
&&&{\scriptstyle 5}
}$
& 52 & 80 & 1 & --  \\
\hline
$2A_2+2A_1$ & $4A_2$ & $\begin{array}{c}
  3 \\  5 \\ 63 \\119
   \end{array}
$ & $\begin{array}{c}
   1 \\ 2 \\ 6 \\ 8
   \end{array}
$ & 0 & 0 & -- & -- & 80 & 80 & 3 & 3  \\
\hline
$A_3+2A_1$ & $A_3+4A_1$ & $\begin{array}{c}
    8 \\ 74 \\ 48 \\ 17
   \end{array}
$ & $\begin{array}{c}
    4 \\ 6 \\ 97
   \end{array}
$ & 1 & 0 & -- & -- & 75 & 76 & 2 & --  \\
\hline
$D_4(a_1)+A_1$ & $D_4(a_1)+A_1$ & $\begin{array}{c}
    7 \\ 32 \\97
   \end{array}
$ & $\begin{array}{c}
    15 \\ 68
   \end{array}
$ & 3 & 6 & $3A_1$ & $\xymatrix@R=.1cm@C=.1cm{
{\scriptstyle 2} &&{\scriptstyle 3} && {\scriptstyle 5} \\
{\bullet} &&{\bullet} && {\bullet}
}$ & 63 & 72 & 2 & --  \\
\hline
$(A_3+A_2)_2$ & $A_3+A_2$ & $\begin{array}{c}
    29\\55\\56
   \end{array}
$ & $\begin{array}{c}
    6\\97
   \end{array}
$ & 3 & 4 & $2A_1$ & $\xymatrix@R=.1cm@C=.1cm{
{\scriptstyle 2}&&{\scriptstyle 3}\\
{\bullet}&&{\bullet}
}$ & 65 & 72 & 6 & --  \\
\hline
$A_3+A_2$ & $(2A_3)'$ & $\begin{array}{c}
    13 \\ 22 \\ 40 \\ 62
   \end{array}
$ & $\begin{array}{c}
    7 \\ 97
   \end{array}
$ & 2 & 4 & $2A_1$ & $\xymatrix@R=.1cm@C=.1cm{
{\scriptstyle 2} && {\scriptstyle 3}\\
{\bullet} && {\bullet}
}$ & 64 & 70 & 2 & --  \\
\hline
$A_4$ & $A_4$ & $\begin{array}{c}
    74 \\ 93
   \end{array}
$ & $\begin{array}{c}
    1 \\ 8
   \end{array}
$ & 4 & 20 & $A_4$ & $\xymatrix@R=.1cm@C=.1cm{
{\scriptstyle 2}&&{\scriptstyle 4}&&{\scriptstyle 5}&&{\scriptstyle 6}\\
{\bullet}\ar@{-}[rr]&&{\bullet}\ar@{-}[rr]&&{\bullet}\ar@{-}[rr]&& {\bullet}
}$ & 44 & 68 & 5 & --  \\
\hline
$A_3+A_2+A_1$ & $2A_3+2A_1$ & $\begin{array}{c}
    62 \\ 22 \\ 13 \\ 40
   \end{array}
$ & $\begin{array}{c}
    97 \\ 7 \\ 3 \\ 2
   \end{array}
$ & 0 & 0 & -- & -- & 66 & 66 & 2 & --  \\
\hline
$D_4(a_1)+A_2$ & $D_4(a_1)+A_2$ & $\begin{array}{c}
    7 \\ 25 \\97
   \end{array}
$ & $\begin{array}{c}
    4 \\ 15 \\ 68
   \end{array}
$ & 2 & 0 & -- & -- & 62 & 64 & 6 & --  \\
\hline
$D_4+A_1$ & $D_4+4A_1$ & $\begin{array}{c}
    97\\61\\7\\32
   \end{array}
$ & $\begin{array}{c}
    8\\5\\2\\3
   \end{array}
$ & 0 & 0 & -- & --
& 64 & 64 & 1 & --  \\
\hline
$2A_3$ & $2D_4(a_1)$ & $\begin{array}{c}
    2 \\ 3 \\ 7 \\97
   \end{array}
$ & $\begin{array}{c}
    11 \\ 12 \\ 15 \\ 68
   \end{array}
$ & 0 & 0 & -- & -- & 60 & 60 & 1 & --  \\
\hline
$A_4+A_1$ & $A_4+A_1$ & $\begin{array}{c}
    26\\74\\93
   \end{array}
$ & $\begin{array}{c}
    1 \\ 8
   \end{array}
$ & 3 & 6 & $A_2$ & $\xymatrix@R=.1cm@C=.1cm{
{\scriptstyle 4}&&{\scriptstyle 5}\\
{\bullet}\ar@{-}[rr]&& {\bullet}
}$ & 51 & 60 & 5 & --  \\
\hline
$D_5(a_1)$ & $D_5(a_1)$ & $\begin{array}{c}
    8\\31
   \end{array}
$ & $\begin{array}{c}
    39\\61\\75
   \end{array}
$ & 3 & 12 & $A_3$ & $\xymatrix@R=.1cm@C=.1cm{
{\scriptstyle 3}&&{\scriptstyle 4}&&{\scriptstyle 5}\\
{\bullet}\ar@{-}[rr]&&{\bullet}\ar@{-}[rr]&& {\bullet}
}$ & 43 & 58 & 2 & --  \\
\hline
\end{tabular*}

\begin{tabular*}{1.03\textwidth}{|@{\extracolsep{\fill} }c|c|c|c|c|c|c|c|c|c|c|c|}
  \hline
 \rule[-.3cm]{0cm}{1cm}{} $\mathcal{O}$ & $\Psi^W(\mathcal{O})$ & $s^1$ & $s^2$ & ${\rm dim}~\h_0$ & $|{\overline{\Delta}}_0|$ & $\overline{\Delta}_0$ & $\Gamma_0$ & $l(s)$ & ${\rm dim}~\Sigma_s$ & d & q  \\ \hline \hline
$(D_4+A_2)_2$ & $D_4+A_2$ & $\begin{array}{c}
    49\\59\\63\\71
   \end{array}
$ & $\begin{array}{c}
    2\\8
   \end{array}
$ & 2 & 0 & -- & -- & 54 & 56 & 3 & --  \\
\hline
$A_4+2A_1$ & $A_4+2A_1$ & $\begin{array}{c}
    26\\74\\93
   \end{array}
$ & $\begin{array}{c}
    1 \\ 8 \\12
   \end{array}
$ & 2 & 0 & -- & -- & 54 & 56 & 5 & --  \\
\hline
$A_4+A_2$ & $A_4+A_2$ & $\begin{array}{c}
    18 \\ 74 \\ 93
   \end{array}
$ & $\begin{array}{c}
    1 \\ 6 \\ 8
   \end{array}
$ & 2 & 2 & $A_1$ & $\xymatrix@R=.1cm@C=.1cm{
{\scriptstyle 4}\\
{\bullet}
}$ & 50 & 54 & 15 & --  \\
\hline
$A_4+A_2+A_1$ & $A_4+A_2+A_1$ & $\begin{array}{c}
    18 \\ 74 \\ 93
   \end{array}
$ & $\begin{array}{c}
    1 \\ 4 \\ 6 \\ 8
   \end{array}
$ & 1 & 0 & -- & -- & 51 & 52 & 15 & --  \\
\hline
$A_5$ & $(A_5+A_1)'$ & $\begin{array}{c}
    22\\61\\62
   \end{array}
$ & $\begin{array}{c}
    7\\23 \\ 24
   \end{array}
$ & 2 & 6 & $A_2$ & $\xymatrix@R=.1cm@C=.1cm{
{\scriptstyle 3}&&{\scriptstyle 4}\\
{\bullet}\ar@{-}[rr]&& {\bullet}
}$ & 44 & 52 & 3 & --  \\
\hline
$D_5(a_1)+A_1$ & $D_5(a_1)+A_1$ & $\begin{array}{c}
    8\\19\\31
   \end{array}
$ & $\begin{array}{c}
    39\\61\\75
   \end{array}
$ & 2 & 2 & $A_1$ & $\xymatrix@R=.1cm@C=.1cm{
{\scriptstyle 4}\\
{\bullet}
}$ & 48 & 52 & 2 & --  \\
\hline
$D_4+A_2$ & $D_4+A_3$ & $\begin{array}{c}
    31\\46\\61\\70
   \end{array}
$ & $\begin{array}{c}
    8\\10\\25
   \end{array}
$ & 1 & 0 & -- & --
& 49 & 50 & 2 & --  \\
\hline
$(A_5+A_1)''$ & $E_6(a_2)$ & $\begin{array}{c}
   1 \\ 8\\44
   \end{array}
$ & $\begin{array}{c}
    34\\35\\68
   \end{array}
$ & 2 & 6 & $A_2$ & $\xymatrix@R=.1cm@C=.1cm{
{\scriptstyle 4}&&{\scriptstyle 5}\\
{\bullet}\ar@{-}[rr]&& {\bullet}
}$ & 42 & 50 & 3 & --  \\
\hline
$D_5$ & $D_5$ & $\begin{array}{c}
    1\\8\\44
   \end{array}
$ & $\begin{array}{c}
    7\\61
   \end{array}
$ & 3 & 13 & $A_3$ & $\xymatrix@R=.1cm@C=.1cm{
{\scriptstyle 2}&&{\scriptstyle 4}&&{\scriptstyle 5}\\
{\bullet}\ar@{-}[rr]&&{\bullet}\ar@{-}[rr]&& {\bullet}
}$
& 33 & 48 & 2 & --  \\
\hline
$A_4+A_3$ & $2A_4$ & $\begin{array}{c}
   4\\6\\74\\93
   \end{array}
$ & $\begin{array}{c}
    1\\2\\5\\8
   \end{array}
$ & 0 & 0 & -- & -- & 48 & 48 & 5 & 5  \\
\hline
$D_5(a_1)+A_2$ & $D_5(a_1)+A_3$ & $\begin{array}{c}
    4\\8\\17\\18
   \end{array}
$ & $\begin{array}{c}
   53 \\64\\61\\31
   \end{array}
$ & 0 & 0 & -- & -- & 46 & 46 & 2 & --  \\
\hline
$(A_5+A_1)'$ & $A_5+A_2+A_1$ & $\begin{array}{c}
    3\\22\\61\\62
   \end{array}
$ & $\begin{array}{c}
    4\\7\\23\\24
   \end{array}
$ & 0 & 0 & -- & -- & 46 & 46 & 3 & 3 \\
\hline
$D_6(a_2)$ & $2D_4$ & $\begin{array}{c}
    8\\4\\19\\31
   \end{array}
$ & $\begin{array}{c}
    59\\58\\61\\2
   \end{array}
$ & 0 & 0 & -- & --
& 44 & 44 & 1 & --  \\
\hline
$A_5+2A_1$ & $E_6(a_2)+A_2$ & $\begin{array}{c}
   1\\4\\8\\44
   \end{array}
$ & $\begin{array}{c}
    5\\34\\35\\68
   \end{array}
$ & 0 & 0 & -- & -- & 44 & 44 & 3 & 3  \\
\hline
$A_5+A_2$ & $E_7(a_4)+A_1$ & $\begin{array}{c}
   8\\37\\9\\5
   \end{array}
$ & $\begin{array}{c}
    59\\61\\58\\2
   \end{array}
$ & 0 & 0 & -- & -- & 42 & 42 & 1 & --  \\
\hline

\end{tabular*}

\newpage

\begin{tabular*}{1.03\textwidth}{|@{\extracolsep{\fill} }c|c|c|c|c|c|c|c|c|c|c|c|}
  \hline
 \rule[-.3cm]{0cm}{1cm}{} $\mathcal{O}$ & $\Psi^W(\mathcal{O})$ & $s^1$ & $s^2$ & ${\rm dim}~\h_0$ & $|{\overline{\Delta}}_0|$ & $\overline{\Delta}_0$ & $\Gamma_0$ & $l(s)$ & ${\rm dim}~\Sigma_s$ & d & q  \\ \hline \hline
$D_5+A_1$ & $D_5+2A_1$ & $\begin{array}{c}
   8\\1\\44\\18
   \end{array}
$ & $\begin{array}{c}
    4\\7\\61
   \end{array}
$ & 1 & 0 & -- & -- & 39 & 40 & 2 & --  \\
\hline
$2A_4$ & $E_8(a_8)$ & $\begin{array}{c}
   2\\3\\6\\8
   \end{array}
$ & $\begin{array}{c}
    19\\41\\59\\76
   \end{array}
$ & 0 & 0 & -- & -- & 40 & 40 & 1 & --  \\
\hline
$D_6(a_1)$ & $D_6(a_1)$ & $\begin{array}{c}
    3\\7\\61
   \end{array}
$ & $\begin{array}{c}
    8\\9\\37
   \end{array}
$ & 2 & 4 & $2A_1$ & $\xymatrix@R=.1cm@C=.1cm{
{\scriptstyle 2}&&{\scriptstyle 5}\\
{\bullet}&&{\bullet}
}$ & 32 & 38 & 2 & --  \\
\hline
$A_6$ & $A_6$ & $\begin{array}{c}
   40 \\ 44 \\ 62
   \end{array}
$ & $\begin{array}{c}
    1 \\ 13 \\ 14
   \end{array}
$ & 2 & 2 & $A_1$ & $\xymatrix@R=.1cm@C=.1cm{
{\scriptstyle 2}\\
{\bullet}
}$ & 34 & 38 & 7 & --  \\
\hline
$A_6+A_1$ & $A_6+A_1$ & $\begin{array}{c}
  2 \\ 40 \\ 44 \\ 62
   \end{array}
$ & $\begin{array}{c}
    1 \\ 13 \\ 14
   \end{array}
$ & 1 & 0 & -- & -- & 35 & 36 & 7 & --  \\
\hline
$D_6(a_1)+A_1$ & $A_7'$ & $\begin{array}{c}
  26 \\ 28 \\ 41 \\ 27
   \end{array}
$ & $\begin{array}{c}
    8 \\ 9 \\ 37
   \end{array}
$ & 1 & 2 & $A_1$ & $\xymatrix@R=.1cm@C=.1cm{
{\scriptstyle 5}\\
{\bullet}
}$ & 33 & 36 & 2 & --  \\
\hline
$(D_5+A_2)_2$ & $D_5+A_2$ & $\begin{array}{c}
    8\\23\\24\\25
   \end{array}
$ & $\begin{array}{c}
    4\\7\\61
   \end{array}
$ & 1 & 0 & -- & -- & 35 & 36 & 6 & --  \\
\hline
$D_5+A_2$ & $A_7+A_1$ & $\begin{array}{c}
  26 \\ 28 \\ 41 \\ 27
   \end{array}
$ & $\begin{array}{c}
   5 \\ 8 \\ 9 \\ 37
   \end{array}
$ & 0 & 0 & -- & -- & 34 & 34 & 2 & --  \\
\hline
$E_6(a_1)$ & $E_6(a_1)$ & $\begin{array}{c}
   15\\44\\55
   \end{array}
$ & $\begin{array}{c}
    1 \\ 6\\8
   \end{array}
$ & 2 & 6 & $A_2$ & $\xymatrix@R=.1cm@C=.1cm{
{\scriptstyle 2}&&{\scriptstyle 4}\\
{\bullet}\ar@{-}[rr]&& {\bullet}
}$ & 26 & 34 & 3 & --  \\
\hline
$D_6$ & $D_6+2A_1$ & $\begin{array}{c}
   3\\49\\21\\48
   \end{array}
$ & $\begin{array}{c}
    1\\2\\6\\8
   \end{array}
$ & 0 & 0 & -- & -- & 32 & 32 & 1 & --  \\
\hline
$D_7(a_2)$ & $D_7(a_2)$ & $\begin{array}{c}
   4\\7\\61
   \end{array}
$ & $\begin{array}{c}
    3\\8\\16\\30
   \end{array}
$ & 1 & 0 & -- & -- & 31 & 32 & 2 & --  \\
\hline
$E_6$ & $E_6$ & $\begin{array}{c}
   1\\7\\44
   \end{array}
$ & $\begin{array}{c}
    8\\26\\27
   \end{array}
$ & 2 & 6 & $A_2$ & $\xymatrix@R=.1cm@C=.1cm{
{\scriptstyle 4}&&{\scriptstyle 5}\\
{\bullet}\ar@{-}[rr]&& {\bullet}
}$ & 24 & 32 & 3 & --  \\
\hline
$(A_7)_3$ & $A_7''$ & $\begin{array}{c}
    21\\22\\26\\27
   \end{array}
$ & $\begin{array}{c}
    16\\30\\32
   \end{array}
$ & 1 & 0 & -- & -- & 31 & 32 & 4 & --  \\
\hline
$A_7$ & $D_8(a_3)$ & $\begin{array}{c}
   3\\5\\7\\61
   \end{array}
$ & $\begin{array}{c}
    4\\8\\23\\24
   \end{array}
$ & 0 & 0 & -- & -- & 30 & 30 & 1 & --  \\
\hline
\end{tabular*}

\newpage

\begin{tabular*}{1.03\textwidth}{|@{\extracolsep{\fill} }c|c|c|c|c|c|c|c|c|c|c|c|}
  \hline
 \rule[-.3cm]{0cm}{1cm}{} $\mathcal{O}$ & $\Psi^W(\mathcal{O})$ & $s^1$ & $s^2$ & ${\rm dim}~\h_0$ & $|{\overline{\Delta}}_0|$ & $\overline{\Delta}_0$ & $\Gamma_0$ & $l(s)$ & ${\rm dim}~\Sigma_s$ & d & q  \\ \hline \hline
$E_6(a_1)+A_1$ & $E_6(a_1)+A_1$ & $\begin{array}{c}
   15\\44\\55
   \end{array}
$ & $\begin{array}{c}
    1 \\ 6\\8\\10
   \end{array}
$ & 1 & 0 & -- & -- & 29 & 30 & 3 & --  \\
\hline
$D_8(a_3)$ & $A_8$ & $\begin{array}{c}
   22\\23\\26\\31
   \end{array}
$ & $\begin{array}{c}
    7\\11\\12\\25
   \end{array}
$ & 0 & 0 & -- & -- & 28 & 28 & 3 & 3  \\
\hline
$D_6+A_1$ & $E_7(a_3)$ & $\begin{array}{c}
   27\\28\\32\\41
   \end{array}
$ & $\begin{array}{c}
    1\\2\\8
   \end{array}
$ & 1 & 2 & $A_1$ & $\xymatrix@R=.1cm@C=.1cm{
{\scriptstyle 5}\\
{\bullet}
}$
& 25 & 28 & 1 & --  \\
\hline
$(D_7(a_1))_2$ & $D_7(a_1)$ & $\begin{array}{c}
   48\\3\\49\\21
   \end{array}
$ & $\begin{array}{c}
    1\\8\\26
   \end{array}
$ & 1 & 0 & -- & -- & 27 & 28 & 2 & --  \\
\hline
$D_7(a_1)$ & $D_8(a_2)$ & $\begin{array}{c}
   1\\6\\8
   \end{array}
$ & $\begin{array}{c}
    12\\21\\25\\27\\49
   \end{array}
$ & 0 & 0 & -- & -- & 26 & 26 & 1 & --  \\
\hline
$E_6+A_1$ & $E_6+A_2$ & $\begin{array}{c}
   1\\5\\7\\44
   \end{array}
$ & $\begin{array}{c}
    4\\8\\26\\27
   \end{array}
$ & 0 & 0 & -- & -- & 26 & 26 & 3 & 3  \\
\hline
$E_7(a_2)$ & $E_7(a_2)+A_1$ & $\begin{array}{c}
   7\\18\\23\\24
   \end{array}
$ & $\begin{array}{c}
    4\\8\\26\\27
   \end{array}
$ & 0 & 0 & -- & -- & 24 & 24 & 1 & --  \\
\hline
$A_8$ & $E_8(a_6)$ & $\begin{array}{c}
   1\\2\\5\\8
   \end{array}
$ & $\begin{array}{c}
    21\\25\\27\\49
   \end{array}
$ & 0 & 0 & -- & -- & 24 & 24 & 1 & --  \\
\hline
$D_7$ & $D_8(a_1)$ & $\begin{array}{c}
    7\\17\\18\\19
   \end{array}
$ & $\begin{array}{c}
    1\\20\\21\\22
   \end{array}
$ & 0 & 0 & -- & -- & 22 & 22 & 1 & --  \\
\hline
$E_7(a_2)+A_1$ & $E_8(a_7)$ & $\begin{array}{c}
   4\\7\\23\\24
   \end{array}
$ & $\begin{array}{c}
    5\\8\\20\\33
   \end{array}
$ & 0 & 0 & -- & -- & 22 & 22 & 1 & --  \\
\hline
$E_7(a_1)$ & $E_7(a_1)$ & $\begin{array}{c}
   1\\13\\14
   \end{array}
$ & $\begin{array}{c}
    3\\5\\8\\32
   \end{array}
$ & 1 & 2 & $A_1$ & $\xymatrix@R=.1cm@C=.1cm{
{\scriptstyle 2}\\
{\bullet}
}$ & 17 & 20 & 1 & --  \\
\hline
$D_8(a_1)$ & $E_8(a_3)$ & $\begin{array}{c}
   3\\7\\23\\24
   \end{array}
$ & $\begin{array}{c}
    4\\8\\26\\27
   \end{array}
$ & 0 & 0 & -- & -- & 20 & 20 & 1 & --  \\
\hline
$E_7(a_1)+A_1$ & $D_8$ & $\begin{array}{c}
   4\\8\\17\\26\\27
   \end{array}
$ & $\begin{array}{c}
    1\\5\\7
   \end{array}
$ & 0 & 0 & -- & -- & 18 & 18 & 1 & --  \\
\hline
\end{tabular*}

\newpage

\begin{tabular*}{1.03\textwidth}{|@{\extracolsep{\fill} }c|c|c|c|c|c|c|c|c|c|c|c|}
  \hline
\rule[-.3cm]{0cm}{1cm}{}  $\mathcal{O}$ & $\Psi^W(\mathcal{O})$ & $s^1$ & $s^2$ & ${\rm dim}~\h_0$ & $|{\overline{\Delta}}_0|$ & $\overline{\Delta}_0$ & $\Gamma_0$ & $l(s)$ & ${\rm dim}~\Sigma_s$ & d & q  \\ \hline \hline
$D_8$ & $E_8(a_5)$ & $\begin{array}{c}
   10\\16\\20\\22
   \end{array}
$ & $\begin{array}{c}
    2\\3\\5\\7
   \end{array}
$ & 0 & 0 & -- & -- & 16 & 16 & 1 & --  \\
\hline
$E_7$ & $E_7+A_1$ & $\begin{array}{c}
   1\\6\\8\\10
   \end{array}
$ & $\begin{array}{c}
    7\\11\\12\\25
   \end{array}
$ & 0 & 0 & -- & -- & 16 & 16 & 1 & --  \\
\hline
$E_7+A_1$ & $E_8(a_4)$ & $\begin{array}{c}
   7\\11\\12\\25
   \end{array}
$ & $\begin{array}{c}
    1\\2\\6\\8
   \end{array}
$ & 0 & 0 & -- & -- & 14 & 14 & 1 & --  \\
\hline
$E_8(a_2)$ & $E_8(a_2)$ & $\begin{array}{c}
   2\\3\\5\\7
   \end{array}
$ & $\begin{array}{c}
    1\\8\\10\\20
   \end{array}
$ & 0 & 0 & -- & -- & 12 & 12 & 1 & --  \\
\hline
$E_8(a_1)$ & $E_8(a_1)$ & $\begin{array}{c}
   6\\8\\10\\11
   \end{array}
$ & $\begin{array}{c}
    1\\2\\5\\7
   \end{array}
$ & 0 & 0 & -- & -- & 10 & 10 & 1 & --  \\
\hline
$E_8$ & $E_8$ & $\begin{array}{c}
   1\\4\\6\\8
   \end{array}
$ & $\begin{array}{c}
    2\\3\\5\\7
   \end{array}
$ & 0 & 0 & -- & -- & 8 & 8 & 1 & --  \\
\hline
\end{tabular*}


\setcounter{equation}{0}
\setcounter{theorem}{0}

\section*{Appendix 2. Irreducible root systems of exceptional types.}

In this Appendix we give the lists of positive roots in irreducible root systems of exceptional types. All simple roots are numbered as shown at the Dynkin diagrams. The other roots in each list are given in terms of their coordinates with respect to the basis of simple roots. The coordinates are indicated in the brackets ( ). Each set of coordinates is preceded by the number of the corresponding root. These numbers are used to indicate roots which appear in the columns $s^1$, $s^2$ and $\Gamma_0$ in the tables in Appendix 1.

\vskip 0.3cm

$\bf G_2.$

$$\xymatrix@R=.25cm{
1&2\\
{\bullet}\ar@3{-}[r] &{\bullet}
}$$

\begin{enumerate}[1~~]
\item
    (1 0)
\item
    (0 1)
\item
    (1 1)
\item
    (2 1)
\item
    (3 1)
\item
    (3 2)
\end{enumerate}

\vskip 1cm

$\bf F4.$

$$\xymatrix@R=.25cm{
1&2&3&4\\
{\bullet}\ar@{-}[r]&{\bullet}\ar@2{-}[r]
&{\bullet}\ar@{-}[r]&{\bullet} }$$

\begin{multicols}{3}
\begin{enumerate}[1~~]
\item
    (1 0 0 0)
\item
    (0 1 0 0)
\item
    (0 0 1 0)
\item
    (0 0 0 1)
\item
    (1 1 0 0)
\item
    (0 1 1 0)
\item
    (0 0 1 1)
\item
    (1 1 1 0)
\item
    (0 1 2 0)
\item
    (0 1 1 1)
\item
    (1 1 2 0)
\item
    (1 1 1 1)
\item
    (0 1 2 1)
\item
    (1 2 2 0)
\item
    (1 1 2 1)
\item
    (0 1 2 2)
\item
    (1 2 2 1)
\item
    (1 1 2 2)
\item
    (1 2 3 1)
\item
    (1 2 2 2)
\item
    (1 2 3 2)
\item
    (1 2 4 2)
\item
    (1 3 4 2)
\item
    (2 3 4 2)
\end{enumerate}
\end{multicols}

\vskip 1cm

$\bf E_6.$

$$\xymatrix@R=.25cm@C=.25cm{
1&&3&&4&&5&&6\\
{\bullet}\ar@{-}[rr]&&{\bullet}\ar@{-}[rr]&&{\bullet}
\ar@{-}[dd]\ar@{-}[rr]&& {\bullet}\ar@{-}[rr] &&{\bullet}
\\ &&&&&&&&\\
&&&&{\bullet}&&&& \\
&&&&2&&&&}$$

\begin{multicols}{3}
\begin{enumerate}[1~~]
\item
   (1 0 0 0 0 0)
\item
   (0 1 0 0 0 0)
\item
   (0 0 1 0 0 0)
\item
   (0 0 0 1 0 0)
\item
   (0 0 0 0 1 0)
\item
   (0 0 0 0 0 1)
\item
   (1 0 1 0 0 0)
\item
   (0 1 0 1 0 0)
\item
   (0 0 1 1 0 0)
\item
   (0 0 0 1 1 0)
\item
   (0 0 0 0 1 1)
\item
   (1 0 1 1 0 0)
\item
   (0 1 1 1 0 0)
\item
   (0 1 0 1 1 0)
\item
   (0 0 1 1 1 0)
\item
   (0 0 0 1 1 1)
\item
   (1 1 1 1 0 0)
\item
   (1 0 1 1 1 0)
\item
   (0 1 1 1 1 0)
\item
   (0 1 0 1 1 1)
\item
   (0 0 1 1 1 1)
\item
   (1 1 1 1 1 0)
\item
   (1 0 1 1 1 1)
\item
   (0 1 1 2 1 0)
\item
   (0 1 1 1 1 1)
\item
   (1 1 1 2 1 0)
\item
   (1 1 1 1 1 1)
\item
   (0 1 1 2 1 1)
\item
   (1 1 2 2 1 0)
\item
   (1 1 1 2 1 1)
\item
   (0 1 1 2 2 1)
\item
   (1 1 2 2 1 1)
\item
   (1 1 1 2 2 1)
\item
   (1 1 2 2 2 1)
\item
   (1 1 2 3 2 1)
\item
   (1 2 2 3 2 1)
\end{enumerate}
\end{multicols}

\vskip 1cm

$\bf E_7.$

$$\xymatrix@R=.25cm@C=.25cm{
1&&3&&4&&5&&6&&7\\
{\bullet}\ar@{-}[rr]&&{\bullet}\ar@{-}[rr]&&{\bullet}
\ar@{-}[dd]\ar@{-}[rr]&& {\bullet}\ar@{-}[rr] &&{\bullet}\ar@{-}[rr]&&{\bullet}
\\ &&&&&&&&&&\\
&&&&{\bullet}&&&&&& \\
&&&&2&&&&&&}$$

\begin{multicols}{3}
\begin{enumerate}[1~~]
\item
    (1 0 0 0 0 0 0)
\item
    (0 1 0 0 0 0 0)
\item
    (0 0 1 0 0 0 0)
\item
    (0 0 0 1 0 0 0)
\item
    (0 0 0 0 1 0 0)
\item
    (0 0 0 0 0 1 0)
\item
    (0 0 0 0 0 0 1)
\item
    (1 0 1 0 0 0 0)
\item
    (0 1 0 1 0 0 0)
\item
    (0 0 1 1 0 0 0)
\item
    (0 0 0 1 1 0 0)
\item
    (0 0 0 0 1 1 0)
\item
    (0 0 0 0 0 1 1)
\item
    (1 0 1 1 0 0 0)
\item
    (0 1 1 1 0 0 0)
\item
    (0 1 0 1 1 0 0)
\item
    (0 0 1 1 1 0 0)
\item
    (0 0 0 1 1 1 0)
\item
    (0 0 0 0 1 1 1)
\item
    (1 1 1 1 0 0 0)
\item
    (1 0 1 1 1 0 0)
\item
    (0 1 1 1 1 0 0)
\item
    (0 1 0 1 1 1 0)
\item
    (0 0 1 1 1 1 0)
\item
    (0 0 0 1 1 1 1)
\item
    (1 1 1 1 1 0 0)
\item
    (1 0 1 1 1 1 0)
\item
    (0 1 1 2 1 0 0)
\item
    (0 1 1 1 1 1 0)
\item
    (0 1 0 1 1 1 1)
\item
    (0 0 1 1 1 1 1)
\item
    (1 1 1 2 1 0 0)
\item
    (1 1 1 1 1 1 0)
\item
    (1 0 1 1 1 1 1)
\item
    (0 1 1 2 1 1 0)
\item
    (0 1 1 1 1 1 1)
\item
    (1 1 2 2 1 0 0)
\item
    (1 1 1 2 1 1 0)
\item
    (1 1 1 1 1 1 1)
\item
    (0 1 1 2 2 1 0)
\item
    (0 1 1 2 1 1 1)
\item
    (1 1 2 2 1 1 0)
\item
    (1 1 1 2 2 1 0)
\item
    (1 1 1 2 1 1 1)
\item
    (0 1 1 2 2 1 1)
\item
    (1 1 2 2 2 1 0)
\item
    (1 1 2 2 1 1 1)
\item
    (1 1 1 2 2 1 1)
\item
    (0 1 1 2 2 2 1)
\item
    (1 1 2 3 2 1 0)
\item
    (1 1 2 2 2 1 1)
\item
    (1 1 1 2 2 2 1)
\item
    (1 2 2 3 2 1 0)
\item
    (1 1 2 3 2 1 1)
\item
    (1 1 2 2 2 2 1)
\item
    (1 2 2 3 2 1 1)
\item
    (1 1 2 3 2 2 1)
\item
    (1 2 2 3 2 2 1)
\item
    (1 1 2 3 3 2 1)
\item
    (1 2 2 3 3 2 1)
\item
    (1 2 2 4 3 2 1)
\item
    (1 2 3 4 3 2 1)
\item
    (2 2 3 4 3 2 1)
\end{enumerate}
\end{multicols}

\vskip 1cm

$\bf E_8.$

$$\xymatrix@R=.25cm@C=.25cm{
1&&3&&4&&5&&6&&7&&8\\
{\bullet}\ar@{-}[rr]&&{\bullet}\ar@{-}[rr]&&{\bullet}
\ar@{-}[dd]\ar@{-}[rr]&& {\bullet}\ar@{-}[rr] &&{\bullet}\ar@{-}[rr]&&{\bullet}\ar@{-}[rr]&&{\bullet}
\\ &&&&&&&&&&&&\\
&&&&{\bullet}&&&&&&&& \\
&&&&2&&&&&&&&}$$

\begin{multicols}{3}
\begin{enumerate}[1~~]
\item
    (1 0 0 0 0 0 0 0)
\item
    (0 1 0 0 0 0 0 0)
\item
    (0 0 1 0 0 0 0 0)
\item
    (0 0 0 1 0 0 0 0)
\item
    (0 0 0 0 1 0 0 0)
\item
    (0 0 0 0 0 1 0 0)
\item
    (0 0 0 0 0 0 1 0)
\item
    (0 0 0 0 0 0 0 1)
\item
    (1 0 1 0 0 0 0 0)
\item
    (0 1 0 1 0 0 0 0)
\item
    (0 0 1 1 0 0 0 0)
\item
    (0 0 0 1 1 0 0 0)
\item
    (0 0 0 0 1 1 0 0)
\item
    (0 0 0 0 0 1 1 0)
\item
    (0 0 0 0 0 0 1 1)
\item
    (1 0 1 1 0 0 0 0)
\item
    (0 1 1 1 0 0 0 0)
\item
    (0 1 0 1 1 0 0 0)
\item
    (0 0 1 1 1 0 0 0)
\item
    (0 0 0 1 1 1 0 0)
\item
    (0 0 0 0 1 1 1 0)
\item
    (0 0 0 0 0 1 1 1)
\item
    (1 1 1 1 0 0 0 0)
\item
    (1 0 1 1 1 0 0 0)
\item
    (0 1 1 1 1 0 0 0)
\item
    (0 1 0 1 1 1 0 0)
\item
    (0 0 1 1 1 1 0 0)
\item
    (0 0 0 1 1 1 1 0)
\item
    (0 0 0 0 1 1 1 1)
\item
    (1 1 1 1 1 0 0 0)
\item
    (1 0 1 1 1 1 0 0)
\item
    (0 1 1 2 1 0 0 0)
\item
    (0 1 1 1 1 1 0 0)
\item
    (0 1 0 1 1 1 1 0)
\item
    (0 0 1 1 1 1 1 0)
\item
    (0 0 0 1 1 1 1 1)
\item
    (1 1 1 2 1 0 0 0)
\item
    (1 1 1 1 1 1 0 0)
\item
    (1 0 1 1 1 1 1 0)
\item
    (0 1 1 2 1 1 0 0)
\item
    (0 1 1 1 1 1 1 0)
\item
    (0 1 0 1 1 1 1 1)
\item
    (0 0 1 1 1 1 1 1)
\item
    (1 1 2 2 1 0 0 0)
\item
    (1 1 1 2 1 1 0 0)
\item
    (1 1 1 1 1 1 1 0)
\item
    (1 0 1 1 1 1 1 1)
\item
    (0 1 1 2 2 1 0 0)
\item
    (0 1 1 2 1 1 1 0)
\item
    (0 1 1 1 1 1 1 1)
\item
    (1 1 2 2 1 1 0 0)
\item
    (1 1 1 2 2 1 0 0)
\item
    (1 1 1 2 1 1 1 0)
\item
    (1 1 1 1 1 1 1 1)
\item
    (0 1 1 2 2 1 1 0)
\item
    (0 1 1 2 1 1 1 1)
\item
    (1 1 2 2 2 1 0 0)
\item
    (1 1 2 2 1 1 1 0)
\item
    (1 1 1 2 2 1 1 0)
\item
    (1 1 1 2 1 1 1 1)
\item
    (0 1 1 2 2 2 1 0)
\item
    (0 1 1 2 2 1 1 1)
\item
    (1 1 2 3 2 1 0 0)
\item
    (1 1 2 2 2 1 1 0)
\item
    (1 1 2 2 1 1 1 1)
\item
    (1 1 1 2 2 2 1 0)
\item
    (1 1 1 2 2 1 1 1)
\item
    (0 1 1 2 2 2 1 1)
\item
    (1 2 2 3 2 1 0 0)
\item
    (1 1 2 3 2 1 1 0)
\item
    (1 1 2 2 2 2 1 0)
\item
    (1 1 2 2 2 1 1 1)
\item
    (1 1 1 2 2 2 1 1)
\item
    (0 1 1 2 2 2 2 1)
\item
    (1 2 2 3 2 1 1 0)
\item
    (1 1 2 3 2 2 1 0)
\item
    (1 1 2 3 2 1 1 1)
\item
    (1 1 2 2 2 2 1 1)
\item
    (1 1 1 2 2 2 2 1)
\item
    (1 2 2 3 2 2 1 0)
\item
    (1 2 2 3 2 1 1 1)
\item
    (1 1 2 3 3 2 1 0)
\item
    (1 1 2 3 2 2 1 1)
\item
    (1 1 2 2 2 2 2 1)
\item
    (1 2 2 3 3 2 1 0)
\item
    (1 2 2 3 2 2 1 1)
\item
    (1 1 2 3 3 2 1 1)
\item
    (1 1 2 3 2 2 2 1)
\item
    (1 2 2 4 3 2 1 0)
\item
    (1 2 2 3 3 2 1 1)
\item
    (1 2 2 3 2 2 2 1)
\item
    (1 1 2 3 3 2 2 1)
\item
    (1 2 3 4 3 2 1 0)
\item
    (1 2 2 4 3 2 1 1)
\item
    (1 2 2 3 3 2 2 1)
\item
    (1 1 2 3 3 3 2 1)
\item
    (2 2 3 4 3 2 1 0)
\item
    (1 2 3 4 3 2 1 1)
\item
    (1 2 2 4 3 2 2 1)
\item
    (1 2 2 3 3 3 2 1)
\item
    (2 2 3 4 3 2 1 1)
\item
    (1 2 3 4 3 2 2 1)
\item
    (1 2 2 4 3 3 2 1)
\item
    (2 2 3 4 3 2 2 1)
\item
    (1 2 3 4 3 3 2 1)
\item
    (1 2 2 4 4 3 2 1)
\item
    (2 2 3 4 3 3 2 1)
\item
    (1 2 3 4 4 3 2 1)
\item
    (2 2 3 4 4 3 2 1)
\item
    (1 2 3 5 4 3 2 1)
\item
    (2 2 3 5 4 3 2 1)
\item
    (1 3 3 5 4 3 2 1)
\item
    (2 3 3 5 4 3 2 1)
\item
    (2 2 4 5 4 3 2 1)
\item
    (2 3 4 5 4 3 2 1)
\item
    (2 3 4 6 4 3 2 1)
\item
    (2 3 4 6 5 3 2 1)
\item
    (2 3 4 6 5 4 2 1)
\item
    (2 3 4 6 5 4 3 1)
\item
    (2 3 4 6 5 4 3 2)
\end{enumerate}
\end{multicols}

\end{document}